\newtheorem{prop}{Proposition}[section]
\newtheorem{theorem}[prop]{Theorem}
\newtheorem{rem}[prop]{Remark}
\newtheorem{lemma}[prop]{Lemma}
\newtheorem{cor}[prop]{Corollary}
\theoremstyle{definition}
\newtheorem{defi}[prop]{Definition}
\def\cH{\mathcal{H}}
\def\cK{\mathcal{A}}
\def\cB{\mathcal{B}}
\def\cD{\mathcal{D}}
\def\cF{\mathcal{F}}
\def\cK{\mathcal{K}}
\def\cL{\mathcal{L}}
\def\cM{\mathcal{M}}
\def\cO{\mathcal{O}}
\def\cU{\mathcal{U}}
\def\cX{\mathcal{X}}
\def\cY{\mathcal{Y}}
\def\R{\mathbb R}
\def\C{\mathbb C}
\def\N{\mathbb N}
\def\ux{\underline{x}}
\def\uy{\underline{y}}
\def\uz{\underline{z}}
\def\u0{\underline{0}}
\def\uomega{\underline{\omega}}
\def\b0{\mbox{\boldmath $0$}}
\def\gl2{{\rm GL}_2(\R)}
\def\gln{{\rm GL}_n(\R)}
\title[The Affine Group of the Plane]{Harmonic Analysis on the Affine Group of 
the Plane}
\author{Raja Milad}
\address{Department of Mathematics \& Statistics, Dalhousie University}
\email{raja.ma.milad@dal.ca}
\author{Keith F. Taylor}
\address{Department of Mathematics \& Statistics, Dalhousie University}
\email{keith.taylor@dal.ca}
\begin{document}

\begin{abstract}
For any natural number $n$, the group $G_n$ of
all invertible affine transformations of 
$n$-dimensional Euclidean space has, up to equivalence, just one square-integrable representation and the left regular representation 
of $G_n$ is a multiple of this square-integrable
representation.
We provide a concrete realization $\sigma$ of this distinguished representation in the two dimensional 
case. We explicitly decompose the
Hilbert space $L^2(G_2)$ as a direct sum of 
left invariant closed subspaces on each of which
the left regular representation acts as a 
representation equivalent to $\sigma$.
\end{abstract}

\maketitle

\section{Introduction} 
The purpose of this paper is to provide concrete
tools, analogous to those provided by the 
Peter--Weyl Theorem for compact groups, for the analysis of square-integrable
functions on the group $G_2$ of all invertible affine
transformations of the Euclidean plane, equipped
with its left Haar measure. This group
is isomorphic, as a topological group, to the
semidirect of the vector group $\R^2$ with 
$\gl2$, the
group of invertible $2\times2$ real matrices, acting on $\R^2$ in the natural manner.
Although $G_2$ is not compact, it shares an important feature with compact groups in that its left
regular representation decomposes as a direct sum
of irreducible representations. Indeed,
for any positive integer
$n$, if $G_n$
denotes the group of invertible affine transformations of $\R^n$, then the left regular representation 
of $G_n$ is an infinite multiple of a single
distinguished irreducible representation. It is
not so difficult to prove this fact about $G_n$
using general results from Mackey's theory of
induced representations as introduced in \cite{Mac}.
However, since we are not aware of an explicit statement of
this feature of $G_n$ in the literature, we
provide it in Theorem \ref{mult_G_n} along with a proof. For the affine group of the line, $G_1$,
it has been known for a long time that its left
regular representation is a multiple of a single
square--integrable representation (see Example 11.3 of
\cite{KL}, for example) and it is relatively routine
to work out various equivalent descriptions of the
distinguished representation. Most of the
original work in this current paper consists of
finding an explicit description of the one
irreducible representation of $G_2$ that occurs in
its left regular representation and to give an 
explicit way to write the left regular representation
of $G_2$ as an infinite multiple of this one
irreducible representation.

For the
rest of this introduction, please refer to Section 2
for any unfamiliar or ambiguous notational conventions. Also, in Section 2, we review
known results on square--integrable representations;
in particular, we recall the Duflo--Moore
Theorem and the generalization of the Peter-Weyl
Theorem that holds for any group whose regular
representation is a direct sum of irreducible 
representations. 

Section 3 is devoted to induced representations. 
We recall one of the equivalent ways of defining 
${\rm ind}_H^G\pi$, where $H$ is a closed
subgroup of a locally compact group $G$ and
$\pi$ is a unitary representation of $H$. In 
the case that there exists a closed subgroup
$K$ of $G$ that is complementary to $H$ (see
Section 3 for the exact definition), we 
show how ${\rm ind}_H^G\pi$ is equivalent to
a representation realized on the Hilbert space
$L^2(K,\cH_\pi)$ (see Proposition \ref{KH_ind}). In that process, we correct an
error in Example 2.29 of \cite{KT}.

In Section 4, we consider $G_n$, for general $n$.
If $N$ denotes the closed abelian normal subgroup
of pure translations and if $\chi_{\uomega}$ is
any nontrivial character of $N$, we construct
a representation $\pi^{\uomega}$, which acts on
the Hilbert space $L^2\big(\gln\big)$ and is equivalent
to ${\rm ind}_N^{G_n}\chi_{\uomega}$. In fact, 
 the $\pi^{\uomega}$ are all
mutually equivalent, 
for $\uomega\in\widehat{\R^n}
\setminus\{\u0\}$. 
To make notation as simple
as possible, let $\uomega_0=(1,0\cdots,0)$. In
Theorem \ref{decomp_GLn_1}, we give an explicit
way of writing $L^2(G_n)$ as a direct sum of
left invariant closed subspaces so that the
restriction of the left regular representation of
$G_n$ to each of these subspaces is equivalent to
$\pi^{\uomega_0}$. When $n=1$, $\uomega_0=1$. The
representation
$\pi^1$ turns out to be the unique 
infinite dimensional 
irreducible representation of $G_1$. We show how
$\pi^1$ is equivalent to the well--known 
{\em natural} representation of $G_1$ on $L^2(\R)$.
We note that the 
fact that the natural representation of
$G_1$ on $L^2(\R)$ is irreducible, even square--integrable, is
closely tied to the reproducing property of the
continuous wavelet transform on $\R$ (see \cite{GMP}).
However, for $n>1$, $\pi^{\uomega_0}$ is reducible.
Using mathematical induction and the Induction in
Stages Theorem, we show that $\pi^{\uomega_0}$ can be
further decomposed when $n>1$ 
and prove that there exists
an irreducible representation $\sigma_n$ of $G_n$
such that the left regular representation of
$G_n$ is equivalent to a countably infinite
multiple of $\sigma_n$ (Theorem \ref{mult_G_n}).
In particular, $\sigma_n$ is square--integrable.
Moreover, up to equivalence, there must be
only one square--integrable representation of
$G_n$, for each $n$.
This is useful to know in a theoretical sense,
but the real nature of $\sigma_n$, for $n>1$, is
obscured by nested operations of inducing, direct
sums, and taking tensor products. For general $n$,
it is a challenging problem to find a realization 
of this abstract $\sigma_n$ that acts on a concrete 
Hilbert space.
We succeed in Section 5 for the case of $n=2$. This
provides a bridge between the soft methods of 
abstract harmonic analysis and the tools of
classical harmonic analysis for study of functions
on the affine group of the plane. We will
use $\sigma$ instead of $\sigma_2$ to denote
the concrete realization of the unique 
square--integrable representation of $G_2$.

One key to the success when $n=2$ 
is a factorization of $\gl2$ as
a product of two closed subgroups. They are
\[
K_0=\left\{\begin{pmatrix}
s & -t\\
t & s
\end{pmatrix}: s,t\in\R, s^2+t^2>0\right\} 
\text{ and }
H_{(1,0)}=\left\{\begin{pmatrix}
1 & 0\\
u & v
\end{pmatrix}: u,v\in\R, v\neq0\right\}.
\]
 Direct
calculations show that $\gl2=K_0H_{(1,0)}$, 
$K_0\cap H_{(1,0)}=\{{\rm id}\}$, where id denotes
the identity matrix, and 
$(M,C)\to MC$ is a homeomorphism of 
$K_0\times H_{(1,0)}$ with $\gl2$. This immediately
provides a similar decomposition of $G_2$. Let 
$K=\{[\u0,M]:M\in K_0\}$ and 
$H=\{[\ux,C]:\ux\in\R^2,C\in H_{(1,0)}\}$. Then
$K\cap H=\{[\u0,{\rm id}]\}$ and $G_2=KH$.
Note that
$\cO=\{(1,0)A:A\in\gl2\}=\widehat{\R^2}\setminus
\{(0,0)\}$ is the nontrivial orbit in 
$\widehat{\R^2}$ under the action of $G_2$ and
$H$ is the stability subgroup of 
$\uomega_0=(1,0)$ under that action. 
Because $K$ is complementary to
$H$, we can identify $K$ with both
the quotient space $G_2/H$ and the
orbit $\cO$. To make our formula for $\sigma$
readable, we define a homeomorphism $\gamma:\cO\to
K_0$ by $\gamma(\uomega)=\frac{1}{\|\uomega\|^2}
\begin{pmatrix}
\omega_1 & -\omega_2\\
\omega_2 & \omega_1
\end{pmatrix}$, for $\uomega\in\cO$, where
$\|\uomega\|$ is the Euclidean norm of $\uomega$.
We also define two rational functions of
six real variables, expressed as functions of
$\uomega\in\cO$ and $A\in\gl2$. They are denoted
$u_{\uomega,A}$ and $v_{\uomega,A}$. The precise
expressions for $u_{\uomega,A}$ and $v_{\uomega,A}$
are given in Proposition \ref{u_omegaA_v_omegaA}.
We can now present a detailed expression for
$\sigma$ realized on a concrete Hilbert space.

The Hilbert space of $\sigma$ is $L^2\big(K,
L^2(\R^*)\big)$, where $\R^*=\R\setminus\{0\}$
viewed as a group under multiplication of real numbers. Both $K$ and $\R^*$ are equipped with
their respective Haar measures. We note that
$K=\{[\u0,\gamma(\uomega)]:\uomega\in\cO\}$.
For $F\in L^2\big(K,L^2(\R^*)\big)$, 
$[\ux,A]\in G_2$, and a.e. $\uomega\in\cO$,
$\sigma[\ux,A]F[\u0,\gamma(\uomega)]$ is the
element of $L^2(\R^*)$ given by
\begin{equation}\label{sigma_pointwise_formula_1}
\big(\sigma[\ux,A]F[\u0,\gamma(\uomega)]\big)(t) = 
\textstyle\frac{|\det(A)|^{1/2}\|\uomega\|}{\|\uomega A\|}
e^{2\pi i(\uomega\ux+t^{-1}u_{\uomega,A})}
\big(F[\u0,\gamma(\uomega A)]\big)
(v_{\uomega,A}^{-1}t),
\end{equation}
for a.e. $t\in\R^*$.
We note that Mackey theory (see \cite{Mac}, \cite{Fol},
or \cite{KT}) tells us that
\[
\widehat{G_2}=\{[\sigma]\}\cup\left\{\tilde{[\pi]}:
[\pi]\in\widehat{\gl2}\right\}.
\]
Here $[\tau]$ denotes the unitary equivalence class
of a representation $\tau$ and, for a representation
$\pi$ of $\gl2$, the lift of $\pi$ to $G_2$ is
denoted $\tilde{\pi}$. A parametrization of 
$\widehat{\gl2}$ is complex (see, for example, \cite{Vog}), but only $\sigma$ is needed for
the analysis of square--integrable functions on $G_2$.

We show that $\sigma$ is square--integrable by
a multi--step process that concludes with the following
core
result (Theorem \ref{V_E_isometry}). 
Let $g\in L^2(\R^*)$ satisfy $\int_{\R^*}\big|
|\nu|^{1/2}g(\nu)\big|^2d\mu_{\R^*}(\nu)=1$ and let
$\eta\in L^2(\widehat{\R^2})$ satisfy
$\|\eta\|_{_{L^2(\widehat{\R^2})}}=1$.
Let $E\in  L^2\big(K,L^2(\R^*)\big)$ be defined
as 
\[
E[\u0,L](\nu)=
\frac{\overline{\eta}((1,0)L^{-1})}{|\det(L)|}\,g(\nu),
\text{ for a.e. } [\u0,L]\in K \text{ and a.e. }
\nu\in\R^*.
\]
 Define $V_EF[\ux,A]=
\langle F,
\sigma[\ux,A]E\rangle_{_{L^2(K,L^2(\R^*))}}$, for
$[\ux,A]\in G_2$ and 
$F \in L^2\big(K,L^2(\R^*)\big)$. Then $V_E$ is
a linear isometry of $L^2\big(K,L^2(\R^*)\big)$
into $L^2(G_2)$ that intertwines $\sigma$ with
the left regular representation of 
$G_2$. In particular, $\sigma$ is a square-integrable representation of $G_2$.

In Theorem \ref{V_E_ij}, we present a doubly 
indexed family $\{\cM_{i,j}:(i,j)\in I\times J\}$
of closed, left invariant, subspaces
of $L^2(G_2)$ such that 
$L^2(G_2)=\sum^{\oplus}_{(i,j)\in I\times J}\cM_{i,j}$ and the restriction of the left regular
representation of $G_2$ to each $\cM_{i,j}$ is
unitarily equivalent to $\sigma$, showing that
the regular representation is supported by the
singleton $\{[\sigma]\}$ in $\widehat{G_2}$. 
This also enables us to
easily formulate what the abstract Plancherel Theorem
looks like 
for the group $G_2$ and we do this in Proposition 
\ref{Plancherel_G_2}. 
We end with a section of concluding
remarks.

\section{Notation and background}
In this section, we introduce our notational conventions as well as those properties of 
square-integrable representations that we need.
Let $n\in\N$. 
\begin{itemize}
\item $\R$ denotes the field of real numbers. Under
addition as operation, $\R$ is a group.
\item $\R^*=\R\setminus\{0\}$, which is a group under multiplication.
\item $\R^+$ is the subgroup of $\R^*$ consisting of 
positive real numbers.
\item $\R^n=\left\{\ux=\begin{pmatrix}
x_1\\
\vdots\\
x_n
\end{pmatrix}: x_1, \cdots, x_n\in\R\right\}$,
$\widehat{\R^n}=\{\uomega=(\omega_1,\cdots,\omega_n)
: \omega_1,\cdots,\omega_n\in\R\}$.
\item The symbol $\u0$ will be used for either
a column vector or row vector of zeroes; whichever
fits in the context.
\item For $f\in L^1(\R^n)$, the Fourier transform of
$f$ is $\widehat{f}(\uomega)=
\int_{\R^n}f(\ux)e^{2\pi i\uomega\ux}d\ux$, for
all $\uomega\in\widehat{\R^n}$.
\item $\cF:L^2(\R^n)\to L^2\big(\widehat{\R^n}\big)$
is the unitary such that $\cF f=\widehat{f}$, for 
all $f\in L^2(\R^n)\cap L^1(\R^n)$.
\item $\gln$ denotes the general linear group of 
invertible $n\times n$ real matrices.
\item For $\ux\in\R^n$ and $A\in\gln$, $[\ux,A]$
is the affine transformation $\uz\to\ux+A\uz$
of $\R^n$.
\item $G_n=\{[\ux,A]:\ux\in\R^n,A\in\gln\}$.
\end{itemize}
For $[\ux,A],[\uy,B]\in G_n$, the composition,
denoted $[\ux,A][\uy,B]$, is the affine transformation $[\ux+A\uy,AB]$. In fact, 
$G_n$ forms a group when equipped with composition
as a product. The identity in $G_n$ is 
$[\u0,{\rm id}]$, where ${\rm id}$ denotes the
identity $n\times n$ matrix. If $\gln$ is considered as an open subset of $\R^{n^2}$, then $\R^n\times
\gln$ is a locally compact Hausdorff space with the
product topology and we move this topology to $G_n$
with the bijection $(\ux,A)\to[\ux,A]$. With this
topology, $G_n$ is a locally compact group which can be viewed as the semidirect product $\R^n\rtimes
\gln$ (see the discussion following Example 1.5
in \cite{KT} for basic information on semidirect products). For any locally compact group $G$, we
use the following notation:
\begin{itemize}
\item $\mu_G$ denotes left Haar measure on $G$. It
is a nonzero Radon measure with the property that
$\mu_G(xE)=\mu_G(E)$, for any Borel $E\subseteq G$
and $x\in G$. Left Haar measure is unique up to
multiplication by a positive constant.
\item Integrals with respect to $\mu_G$ are denoted
$\int_Gf\,d\mu_G$ or $\int_Gf(x)\,d\mu_G(x)$, for
any function $f$ on $G$ for which the integral has
meaning. Thus, $\int_Gf(yx)\,d\mu_G(x)=
\int_Gf(x)\,d\mu_G(x)$, for any $y\in G$.
\item The {\em modular function} on $G$ is the 
continuous homorphism $\Delta_G:G\to\R^+$ that 
satisfies $\Delta_G(y)\int_Gf(xy)\,d\mu_G(x)=
\int_Gf(x)\,d\mu_G(x)$, for any $y\in G$ and any
$f$ where the integral makes sense. We call 
$G$ {\em unimodular} if $\Delta_G$ is identically 1
on $G$; otherwise $G$ is {\em nonunimodular}.
\item For $1\leq p\leq\infty$, $L^p(G)$ denotes 
the usual Lebesgue space $L^p(G,\cB_G,\mu_G)$,
where $\cB_G$ denotes the $\sigma$-algebra of 
Borel subsets of $G$.
\item $L^2(G)$ is a Hilbert space with inner
product given by
$\langle f,g\rangle_{L^2(G)}=
\int_Gf(x)\overline{g(x)}\,
d\mu_G(x)$, for all $f,g\in L^2(G)$.
\item $C_b(G)$ is the space of bounded continuous
$\C$-valued functions on $G$, a Banach space when
equipped with the uniform norm. 
\item $C_c(G)$ denotes the space of continuous
$\C$-valued functions of compact support on $G$.
\end{itemize}
For a particular locally compact group $G$, we often
identify $\mu_G$ by giving a formula for
$\int_Gf\,d\mu_G$ for any $f\in C_c(G)$. This
uniquely identifies the left Haar measure, and the 
normalization we are using, by the Riesz Representation Theorem. For example, the left
Haar measure on $\gln$ is described as follows 
(see \cite{HR}).
For $A\in\gln$, let $a_{ij}$ denote the $(i,j)$
entry of $A$. Then, 
\begin{equation}\label{Haar_GL_n}
\int_{\gln}f\,d\mu_{\gln}=
\int_{-\infty}^\infty\cdots\int_{-\infty}^\infty
f\begin{pmatrix}
a_{11} & \cdots & a_{1n}\\
\vdots & \ddots & \vdots\\
a_{n1} & \cdots & a_{nn}
\end{pmatrix}
\frac{da_{nn}\cdots da_{11}}{|\det(A)|^n},
\end{equation}
for any $f\in C_c(\gln)$.
In particular, 
\[
\int_{\gl2}f\,d\mu_{\gl2}=
\int_{-\infty}^\infty\int_{-\infty}^\infty
\int_{-\infty}^\infty\int_{-\infty}^\infty
f\begin{pmatrix}
w & x\\
y & z
\end{pmatrix}\frac{dz\,dy\,dx\,dw}{(wz-xy)^2},
\]
for any $f\in C_c(\gl2)$. Note that $\mu_{\gln}$
is also right invariant, so $\gln$ is a
unimodular group.

For a semidirect product group such as $G_n$,
the left Haar measure can be computed as described
on page 9 of \cite{KT}. This gives
\begin{equation}\label{Haar_G_n}
\int_{G_n}f\,d\mu_{G_n}=\int_{\gln}\int_{\R^n}
f[\ux,A]\,\frac{d\ux\,d\mu_{\gln}(A)}{|\det(A)|},
\text{ for all }f\in C_c(G_n).
\end{equation}
The modular function of $G_n$ is given by
$\Delta_{G_n}[\ux,A]=|\det(A)|^{-1}$, for 
$[\ux,A]\in G_n$.

For the theory of Hilbert spaces and operators
on Hilbert spaces, a good reference is \cite{KR}.
Let $\cH$ be a Hilbert space. 
\begin{itemize}
\item The norm and inner product
on $\cH$ are denoted by $\|\cdot\|_{\cH}$ and 
$\langle\cdot,\cdot
\rangle_{\cH}$, respectively. 
\item The Banach $*$-algebra of 
bounded linear operators on $\cH$ is denoted
$\cB(\cH)$. 
\item The group of all unitary
operators on $\cH$ is $\cU(\cH)$. 
\item $A\in\cB(\cH)$ is called a {\em Hilbert--Schmidt operator} on $\cH$ if 
$\sum_{j\in J}\|A\xi_j\|_{\cH}^{\,\,2}<\infty$, for
any orthonormal basis $\{\xi_j:j\in J\}$ of $\cH$.
The sum $\sum_{j\in J}\|A\xi_j\|_{\cH}^{\,\,2}$ is
independent of the basis.
\item Let $\cB_2(\cH)$ denote the set of all
Hilbert--Schmidt operators on $\cH$. This is a 
Hilbert space with norm given by
$\|A\|_{\rm HS}=\left(
\sum_{j\in J}\|A\xi_j\|_{\cH}^{\,\,2}\right)^{1/2}$,
for any orthonormal basis 
$\{\xi_j:j\in J\}$ of $\cH$.
\item If $(X,\mu)$ is a $\sigma$--finite 
measure space and $g:X\to\C$
is measurable, for any $f\in L^2(X,\mu)$, let
$M_gf=gf$. If $g$ is bounded, then 
$M_g\in\cB\big(L^2(X,\mu)\big)$. In general, $M_g$
is a not necessarily bounded operator on 
$L^2(X,\mu)$ (see Example 2.7.2 of \cite{KR}).
\end{itemize}

Let $G$ be a locally compact group. Let $\cH_\pi$
be a Hilbert space and let $\pi:G\to\cU(\cH_\pi)$ be
a group homomorphism. We call $\pi$, more formally
the pair $(\cH_\pi,\pi)$, a {\em unitary
representation} of $G$ if, for every $\xi,\eta
\in\cH_\pi$, the map $x\to\langle\xi,\pi(x)\eta
\rangle_{\cH_\pi}$ is a continuous function on $G$.
When no confusion can arise, we will simply write
representation of $G$ to mean a unitary representation of $G$. The {\em left regular
representation} of $G$ is denoted $\lambda_G$.
The Hilbert space of $\lambda_G$ is $L^2(G)$ and,
for each $x\in G$, $\lambda_G(x)f(y)=
f(x^{-1}y)$, for $\mu_G$-almost every
 $y\in G$ and any $f\in L^2(G)$.
For a given representation
$\pi$ of $G$, we have the following notation and concepts:
\begin{itemize}
\item For any $f\in L^1(G)$, there exists 
$\pi(f)\in\cB(\cH_\pi)$ that satisfies
\[
\langle\pi(f)\xi,\eta\rangle=
\int_G f(x)\langle\pi(x)\xi,\eta\rangle d\mu_G(x),
\text{ for all }\xi,\eta\in\cH_\pi.
\]
Then $\pi:L^1(G)\to\cB(\cH_\pi)$ is a non-degenerate
$*$-homomorphism.
\item For the left regular representation,
$\lambda_G(f)h=f*h$, for $h\in L^2(G)$ and
$f\in L^1(G)$.
\item Suppose $\sigma$ is another representation
of $G$ and $A:\cH_\sigma\to\cH_\pi$ is a bounded
linear operator. We say that $A$ {\em intertwines}
$\sigma$ and $\pi$ if $A\sigma(x)=\pi(x)A$, for all
$x\in G$.
\item If $\sigma$ is another representation of 
$G$, we say $\sigma$ is {\em equivalent} to $\pi$ if
there exists a unitary map $U:\cH_\sigma\to\cH_\pi$
that intertwines $\sigma$ and $\pi$. Then $U^{-1}$
intertwines $\pi$ and $\sigma$.
 We write
$\sigma\sim\pi$ when $\sigma$ and $\pi$ are
equivalent. Then $\sim$ is an equivalence 
relation on any set of representations of $G$.
\item A closed subspace $\cK$ of $\cH_\pi$ is 
{\em $\pi$-invariant} if $\xi\in\cK$ implies 
$\pi(x)\xi\in\cK$, for all $x\in G$.
\item If $\cK$ is a $\pi$-invariant closed 
subspace of $\cH_\pi$, let $\pi_{\cK}$ denote
the representation whose Hilbert space is $\cK$ and
satisfies $\pi_{\cK}(x)=\pi(x)|_{\cK}$, for all
$x\in G$. We call $\pi_{\cK}$ a {\em subrepresentation} of $\pi$.
\item If $J$ is an index set and $\pi_j$ is a
representation of $G$ on $\cH_j$, for each $j\in J$,
then $\oplus_{j\in J}\pi_j$ denotes the
representation of $G$ acting on 
$\underline{\xi}=(\xi_j)_{j\in J}\in 
\oplus_{j\in J}\cH_j$ by 
\[
\big(\oplus_{j\in J}\pi_j\big)(x)\underline{\xi}=
\big(\pi_j(x)\xi_j\big)_{j\in J}, \text{ for }
x\in G.
\]
\item If $J$ is an index set with cardinality $m$,
suppose
$\pi_j=\pi$ and $\cH_j=\cH_\pi$, for all $j\in J$,
then $\oplus_{j\in J}\pi_j$ may be denoted
$m\cdot\pi$. If $\sigma$ is a representation of $G$
such that $\sigma\sim m\cdot\pi$, then $\sigma$
may be called a {\em multiple of} 
$\pi$.
\item If $\{0\}$ and $\cH_\pi$ are the only
$\pi$-invariant closed subspaces of $\cH_\pi$,
then $\pi$ is called an {\em irreducible 
representation} of $G$.
\item For each $\eta\in\cH_\pi$, define the linear
map $V_\eta:\cH_\pi\to C_b(G)$ by, for each 
$\xi\in\cH_\pi$,
\[
V_\eta\xi(x)=\langle\xi,\pi(x)\eta\rangle_{\cH_\pi},
\text{ for all }x\in G.
\]
\item $\pi$ is called a {\em square-integrable
representation} if $\pi$ is irreducible and
the exists a nonzero $\eta\in\cH_\pi$ such that
$V_\eta\eta\in L^2(G)$.
\end{itemize}
Examples of square-integrable representations 
arise in an elementary manner for many 
groups of affine transformations. 
Let $K$ be a closed subgroup of
$\gln$ and let $G=\R^n\rtimes K=\{[\ux,A]:\ux\in\R^n, 
A\in K\}$, a closed subgroup of $G_n$. For
$\uomega\in\widehat{\R^n}$, The $K$-orbit of
$\uomega$ is $\uomega K=\{\uomega A:A\in K\}$ and
the stabilizer is $K_{\uomega}=
\{A\in K:\uomega A=\uomega\}$. The {\em natural 
representation} $\rho$ of $G$ on $L^2(\R^n)$ 
is given by, for $[\ux,A]\in G$ and $f\in L^2(\R^n)$,
\begin{equation}\label{natural}
\rho[\ux,A]f(\uy)=|\det(A)|^{-1/2}f\left(
A^{-1}(\uy-\ux)\right), \text{ for a.e. }
\uy\in\R^n.
\end{equation}
If $\cO$ is an open subset of $\widehat{\R^n}$, let
$\cH^2_{\cO}$ denote the closed
subspace of $L^2(\R^n)$
consisting of functions whose Fourier transform is
supported on $\cO$. If 
$\cO K=\{\uomega A:\uomega\in\cO, A\in K\}=\cO$,
then $\cH^2_{\cO}$ is a $\rho$-invariant subspace
of $L^2(\R^n)$. Let $\rho_{\cO}$ denote the 
corresponding subrepresentation of $\rho$.
If $\cO$ is a single $K$-orbit that is free,
in the sense that $K_{\uomega}=\{{\rm id}\}$, for
any $\uomega\in\cO$, then $\rho_{\cO}$ was shown to
be square-integrable in \cite{BT}.
The following more general 
proposition was established for open
$K$-orbits with compact stability subgroups in
\cite{Fuh1}.
\begin{prop}\label{open_orbit_sq_int}
Let $K$ be a closed subgroup of $\gln$ and let
$G=\R^n\rtimes K$. Suppose there exists an
$\uomega\in\widehat{\R^n}$ such that $\cO=\uomega K$
is open. Then 
$\rho_{\cO}$ is a square-integrable representation
of $G$ if and only if $K_{\uomega}$ is compact.
\end{prop}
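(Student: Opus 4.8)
The plan is to transfer the whole question to the Fourier side and reduce square-integrability to the finiteness of a single scalar. Conjugating by the unitary $\cF$, the subrepresentation $\rho_{\cO}$ is carried to a representation $\widehat{\rho}_{\cO}$ acting on $L^2(\cO)$ by
\[
\widehat{\rho}_{\cO}[\ux,A]\phi(\uomega)=|\det(A)|^{1/2}e^{2\pi i\uomega\ux}\phi(\uomega A),
\]
for $\phi\in L^2(\cO)$ and a.e.\ $\uomega\in\cO$, a formula coming from a direct change of variables in (\ref{natural}). Since $\cO=\uomega K$ is a single $K$-orbit, Mackey's theory identifies $\widehat{\rho}_{\cO}$ with ${\rm ind}_{\R^n\rtimes K_{\uomega}}^{G}(\chi_{\uomega}\otimes 1)$, which is irreducible because the inducing representation of the stabilizer is the trivial (hence irreducible) character. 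Thus irreducibility holds regardless of the nature of $K_{\uomega}$, and the entire question becomes the existence of an admissible vector.

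First I would compute, for $\phi\in L^2(\cO)$, the admissibility integral $\int_{G}|\langle\phi,\widehat{\rho}_{\cO}[\ux,A]\phi\rangle|^2\,d\mu_{G}[\ux,A]$. Integrating first in $\ux$ over $\R^n$ and applying the Plancherel theorem to the function $\uomega\mapsto\phi(\uomega)\overline{\phi(\uomega A)}$ produces a factor $|\det(A)|$ that cancels exactly against the $|\det(A)|^{-1}$ occurring in the Haar measure of $G=\R^n\rtimes K$, which is computed as in (\ref{Haar_G_n}). After interchanging the order of integration one is left with
\[
\int_{G}|V_\phi\phi|^2\,d\mu_{G}=\int_{\cO}|\phi(\uomega)|^2\left(\int_{K}|\phi(\uomega A)|^2\,d\mu_{K}(A)\right)d\uomega.
\]
A short calculation using only the left-invariance of $\mu_K$ shows that the inner integral $\Phi(\uomega)=\int_{K}|\phi(\uomega A)|^2\,d\mu_{K}(A)$ is constant as $\uomega$ ranges over the orbit $\cO$; calling its value $C_\phi$, I obtain $\int_{G}|V_\phi\phi|^2\,d\mu_{G}=C_\phi\,\|\phi\|_{L^2(\cO)}^2$. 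Square-integrability of $\rho_{\cO}$ is therefore equivalent to the existence of a nonzero $\phi$ with $C_\phi<\infty$.

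It remains to decide, in terms of $K_{\uomega}$, whether such a $\phi$ exists. Fixing a base point $\uomega_0$ with stabilizer $H=K_{\uomega_0}$, I would read $C_\phi=\int_{K}|\phi(\uomega_0 A)|^2\,d\mu_K(A)=\int_{\cO}|\phi|^2\,d(p_*\mu_K)$, where $p:K\to\cO$, $p(A)=\uomega_0 A$, is the orbit map and $p_*\mu_K$ is the pushforward of Haar measure. The fibers of $p$ are the left cosets of $H$, so Weil's quotient integral formula shows that the fiber contribution is a positive multiple of $\mu_H(H)$. When $H$ is compact, $p$ is proper and $\mu_H(H)<\infty$, so $p_*\mu_K$ is a nonzero Radon measure; being relatively invariant under the transitive $K$-action on the \emph{open} set $\cO$, it is mutually absolutely continuous with Lebesgue measure, with a continuous strictly positive density. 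Choosing any nonzero $\phi\in C_c(\cO)$ then gives $C_\phi<\infty$, so $\cF^{-1}\phi$ is an admissible vector and $\rho_{\cO}$ is square-integrable. When $H$ is non-compact, $\mu_H(H)=\infty$ forces $p_*\mu_K$ to assign infinite mass to every set of positive Lebesgue measure, whence $C_\phi=\infty$ for every nonzero $\phi$ and no admissible vector exists.

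I expect the main obstacle to be the measure-theoretic step in the last paragraph: justifying via Weil's formula that $p_*\mu_K$ is, up to a locally bounded positive density, Lebesgue measure on $\cO$ in the compact case and totally infinite in the non-compact case. This is precisely where openness of $\cO$ is essential, since it guarantees that the quasi-invariant measure class on the homogeneous space $H\backslash K$ coincides with the restriction of Lebesgue measure. The remaining bookkeeping (the density function of Weil's formula for the pair $(H,K)$, and the reduction from general $\phi\in L^2(\cO)$ to smooth compactly supported $\phi$ in the Plancherel step) is routine.
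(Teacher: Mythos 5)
The paper does not actually prove Proposition \ref{open_orbit_sq_int}; it quotes the result from the literature, attributing the free-orbit case to \cite{BT} and the general compact-stabilizer case to \cite{Fuh1}. So there is no internal proof to compare against, and what you have written is essentially a correct reconstruction of the standard argument behind those citations: conjugating by $\cF$ to get $\widehat{\rho}_{\cO}[\ux,A]\phi(\uomega)=|\det(A)|^{1/2}e^{2\pi i\uomega\ux}\phi(\uomega A)$, identifying this with ${\rm ind}_{\R^n\rtimes K_{\uomega}}^{G}(\chi_{\uomega}\otimes 1)$ (irreducible by Mackey's theorem, since an open orbit is locally closed and the little-group representation is a character), cancelling the Plancherel factor $|\det(A)|$ against the density in the Haar measure of $G$, and reducing admissibility to finiteness of the orbit-constant $C_\phi=\int_K|\phi(\uomega A)|^2\,d\mu_K(A)$ are all exactly right. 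The one step where your sketch is thinner than it should be is the non-compact direction: in general no $K$-invariant measure exists on $K_{\uomega}\backslash K$, so Weil's formula carries a rho-function and the slogan that $\mu_{K_{\uomega}}(K_{\uomega})=\infty$ makes $p_*\mu_K$ totally infinite is not yet a proof. A clean substitute: if $F(A)=|\phi(\uomega_0A)|^2$ were in $L^1(K)$, then for any nonzero $g\in C_c(K)$ with $g\ge0$, Fubini and left invariance of $\mu_K$ give $\int_KF\cdot(Tg)\,d\mu_K=\mu_{K_{\uomega_0}}(K_{\uomega_0})\int_KFg\,d\mu_K$, where $Tg(A)=\int_{K_{\uomega_0}}g(hA)\,d\mu_{K_{\uomega_0}}(h)$ is a bounded function; non-compactness of $K_{\uomega_0}$ then forces $\int_KFg\,d\mu_K=0$ for every such $g$, hence $\phi=0$ a.e. With that replacement, and your (correct) observation that in the compact case the pushforward $p_*\mu_K$ is a relatively invariant Radon measure on the open orbit and therefore has a continuous strictly positive density against Lebesgue measure, the proof is complete.
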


Our primary reference for the properties of square-integrable 
representations is \cite{DM}. The following theorem
is formulated from Theorem 3 of \cite{DM} and 
the preliminary results in \cite{DM}. Also see Theorem 2.25 of \cite{Fuh}.
\begin{theorem}\label{Du_Mo} {\bf [Duflo-Moore]}
Let $\pi$ be a square-integrable representation
of a locally compact group. Let $\cD_\pi=\{\eta:
V_\eta\eta\in L^2(G)\}$. Then\\
\indent {\rm (a)} $\cD_\pi$ is a dense
linear subspace of $\cH_\pi$,\\
\indent {\rm (b)} if $\eta\in\cD_\pi$,
then $V_\eta\xi\in L^2(G)$, for all $\xi\in\cH_\pi$,
\\
\indent {\rm (c)} there exists a nonzero positive
selfadjoint operator $C_\pi$ in $\cH_\pi$
with domain $\cD_\pi$ satisfying
\[
\pi(x)C_\pi\pi(x)^*=\Delta_G(x)^{1/2}C_\pi, 
\text{ for all } x\in G,
\]
\indent {\rm (d)} for any $\xi_1,\xi_2\in\cH_\pi$
and $\eta_1,\eta_2\in\cD_\pi$,
\[
\langle V_{\eta_1}\xi_1,
V_{\eta_2}\xi_2\rangle_{L^2(G)}=
\langle\xi_1,\xi_2\rangle_{\cH_\pi}
\langle C_\pi\eta_2,C_\pi\eta_1\rangle_{\cH_\pi}
\]
Moreover, if $T$ is any densely defined nonzero
positive selfadjoint operator in $\cH_\pi$ that
satisfies the identity $\pi(x)T\pi(x)^*=\Delta_G(x)^{1/2}T$,
for all $x\in G$, then the domain of $T$ is $\cD_\pi$ 
and there exists a constant 
$r> 0$ such that $T=rC_\pi$.
\end{theorem}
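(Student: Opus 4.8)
The plan is to exploit the fact that each coefficient map $V_\eta$ intertwines $\pi$ with the left regular representation $\lambda_G$ and then to extract all four statements from Schur's lemma applied to operators built from the $V_\eta$. A direct computation gives the two basic covariance relations
\[
V_\eta\,\pi(y)=\lambda_G(y)\,V_\eta \quad\text{and}\quad R_yV_\eta=V_{\pi(y)\eta},
\]
where $R_yF(x)=F(xy)$; the first says $V_\eta$ is an intertwiner into $\lambda_G$, and the second records how the ``window'' $\eta$ responds to right translation. I also note that $V_\eta$ is injective whenever $\eta\neq0$: its kernel is the orthogonal complement of $\overline{\mathrm{span}}\{\pi(x)\eta:x\in G\}$, which is a nonzero closed $\pi$-invariant subspace and hence all of $\cH_\pi$ by irreducibility.

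First I would prove (b) together with boundedness of $V_\eta$. Fix a nonzero $\eta_0\in\cD_\pi$ and regard $V_{\eta_0}$ as an operator with domain $\{\xi:V_{\eta_0}\xi\in L^2(G)\}$. This domain is $\pi$-invariant by the first covariance relation (because $\lambda_G(y)$ is unitary on $L^2(G)$), so its closure is a nonzero closed $\pi$-invariant subspace, forcing the domain to be dense. The operator $V_{\eta_0}$ is closed, since pointwise convergence of $V_{\eta_0}\xi_n(x)=\langle\xi_n,\pi(x)\eta_0\rangle_{\cH_\pi}$ identifies any $L^2$-limit with $V_{\eta_0}\xi$. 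Hence $T:=V_{\eta_0}^{*}V_{\eta_0}$ is a positive selfadjoint operator, and the first covariance relation together with unitarity of $\lambda_G(y)$ gives $\pi(y)^{*}T\pi(y)=T$, so $T$ commutes with the irreducible representation $\pi$. By the unbounded form of Schur's lemma, $T$ is a positive scalar; being nonzero, it forces $V_{\eta_0}$ to be bounded with full domain $\cH_\pi$, which is exactly (b). Statement (a) then follows from (b): for $\eta_1,\eta_2\in\cD_\pi$ we have $V_{\eta_1+\eta_2}(\eta_1+\eta_2)=V_{\eta_1}(\eta_1+\eta_2)+V_{\eta_2}(\eta_1+\eta_2)\in L^2(G)$, so $\cD_\pi$ is a linear subspace, and it is $\pi$-invariant by the second covariance relation together with (b); containing a nonzero vector, its closure is all of $\cH_\pi$ by irreducibility.

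Next I would establish the orthogonality relations (d) and simultaneously construct $C_\pi$. For $\eta_1,\eta_2\in\cD_\pi$ the bounded operator $V_{\eta_2}^{*}V_{\eta_1}$ intertwines $\pi$ with itself, hence by Schur's lemma equals a scalar $\beta(\eta_1,\eta_2)I$, giving $\langle V_{\eta_1}\xi_1,V_{\eta_2}\xi_2\rangle_{L^2(G)}=\beta(\eta_1,\eta_2)\langle\xi_1,\xi_2\rangle_{\cH_\pi}$. Tracking linearity shows $\beta$ is a Hermitian sesquilinear form on $\cD_\pi$, conjugate-linear in $\eta_1$ and linear in $\eta_2$, and positive definite since $\beta(\eta,\eta)I=V_\eta^{*}V_\eta>0$ for $\eta\neq0$. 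The quadratic form $\eta\mapsto\beta(\eta,\eta)$ is closed with form-domain exactly $\cD_\pi$ (lower semicontinuity coming from closedness of the $V_\eta$), so the representation theorem for closed positive forms yields a positive selfadjoint operator with domain $\cD_\pi$ whose square root $C_\pi$ satisfies $\beta(\eta_1,\eta_2)=\langle C_\pi\eta_2,C_\pi\eta_1\rangle_{\cH_\pi}$; this is (d).

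Finally I would derive the covariance (c) and the uniqueness clause. Combining $R_yV_\eta=V_{\pi(y)\eta}$ with the identity $\|R_yF\|_{2}^{2}=\Delta_G(y)^{-1}\|F\|_2^{2}$ (read off from the modular function) gives $\|C_\pi\pi(y)\eta\|^{2}=\Delta_G(y)^{-1}\|C_\pi\eta\|^{2}$; polarizing and taking square roots produces $\pi(x)C_\pi\pi(x)^{*}=\Delta_G(x)^{1/2}C_\pi$. For uniqueness I would first note $C_\pi$ is injective, since $\ker C_\pi$ is $\pi$-invariant and proper, hence trivial; then, given any positive selfadjoint $T$ with the same covariance, inverting the covariance relation shows $C_\pi T^{-1}$ commutes with $\pi$, so Schur's lemma forces $T=rC_\pi$ with $r>0$ and matches the domains. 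The main obstacle throughout is the unbounded-operator bookkeeping: making the Schur argument for the unbounded $T=V_{\eta_0}^{*}V_{\eta_0}$ rigorous (via its polar decomposition and spectral projections), and verifying that the form $\beta$ is genuinely closed with form-domain precisely $\cD_\pi$, so that the resulting $C_\pi$ has exactly the stated domain.
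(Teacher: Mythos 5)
The paper offers no proof of this theorem: it is quoted from Theorem 3 of \cite{DM} (see also Theorem 2.25 of \cite{Fuh}), so there is no in-paper argument to compare against. Judged on its own, your outline is essentially the standard proof from those sources and is correct in all its main steps: the two covariance relations $V_\eta\pi(y)=\lambda_G(y)V_\eta$ and $R_yV_\eta=V_{\pi(y)\eta}$ hold as stated; closedness and injectivity of $V_{\eta_0}$ together with the unbounded Schur lemma applied to $V_{\eta_0}^{*}V_{\eta_0}$ do give (b); (a) follows as you say; Schur applied to $V_{\eta_2}^{*}V_{\eta_1}$ gives the form for (d) with the correct (conjugate-)linearity; and the square-root/polarization computation in (c) produces exactly $\pi(x)C_\pi\pi(x)^{*}=\Delta_G(x)^{1/2}C_\pi$, since $\|R_yF\|_2^2=\Delta_G(y)^{-1}\|F\|_2^2$ under the paper's normalization. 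Two points deserve explicit mention. For the form-domain identification you need $\{\eta:V_\eta\xi_0\in L^2(G)\}=\cD_\pi$ for a fixed nonzero $\xi_0$; the nontrivial inclusion is supplied by your own step-(b) argument (if $V_\eta\xi_0\in L^2(G)$ for one nonzero $\xi_0$, then $\{\xi:V_\eta\xi\in L^2(G)\}$ is nonzero, hence all of $\cH_\pi$, hence contains $\eta$), and lower semicontinuity of $\eta\mapsto\int_G|\langle\xi_0,\pi(x)\eta\rangle|^2\,d\mu_G(x)$ via Fatou gives closedness of the form; so no new idea is needed there, only the bookkeeping you flag. The genuinely thin spot is the uniqueness clause: the operator $C_\pi T^{-1}$ is a product of two unbounded operators whose natural domain ${\rm ran}(T)\cap{\rm dom}(C_\pi)$ is not obviously dense, so Schur's lemma cannot be invoked directly; one has to pass to bounded spectral functions of $T$ and $C_\pi$ or compare the associated quadratic forms, and this is where most of the actual work in \cite{DM} lies. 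As a sketch the proposal is a faithful reconstruction of the cited proof; as a complete argument, the uniqueness step still needs to be carried out.
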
 
\begin{defi}
Let $\pi$ be a square-integrable representation of
a locally compact group $G$. The operator 
$C_\pi$ named in Theorem \ref{Du_Mo} is called the
{\em Duflo-Moore operator} of $\pi$.
\end{defi}
Let $\pi$ be a square-integrable representation of
a locally compact group $G$. For any 
$\eta\in\cD_\pi$, by (b) and (d) of Theorem \ref{Du_Mo},
$\|V_\eta\xi\|_{L^2(G)}=\|C_\pi\eta\|_{\cH_\pi}
\|\xi\|_{\cH_\pi}$, for all $\xi\in\cH_\pi$.
Thus, $\cK^\pi_\eta=\{V_\eta\xi:\xi\in\cH_\pi\}$ is
a closed subspace of $L^2(G)$ and $V_\eta:\cH_\pi
\to \cK^\pi_\eta\subseteq L^2(G)$ is a bounded linear map.
Moreover, if $\|C_\pi\eta\|_{\cH_\pi}=1$, then
$V_\eta$ is an isometry into $L^2(G)$. In fact,
(d) of Theorem \ref{Du_Mo} shows that $V_\eta$ is
a unitary map when considered as a map of $\cH_\pi$
onto $\cK^\pi_\eta$. Note that, for $x\in G$ and 
$\xi\in\cH_\pi$,
\begin{equation}\label{intertwine_V_eta_lambda}
\lambda_G(x)V_\eta\xi(y)=V_\eta\xi(x^{-1}y)=
\langle\xi,\pi(x^{-1}y)\eta\rangle_{\cH_\pi}=
\langle\pi(x)\xi,\pi(y)\eta\rangle_{\cH_\pi}=
V_\eta\pi(x)\xi(y),
\end{equation}
for all $y\in G$. Thus, $\cK^\pi_\eta$ is a 
$\lambda_G$-invariant subspace of $L^2(G)$ and
$V_\eta$ intertwines $\pi$ with $\lambda_G$. 
This means $\pi$ is equivalent to the restriction 
of $\lambda_G$ to $\cK^\pi_\eta$. If $\eta_1,\eta_2
\in\cD_\pi$ are such that $C_\pi\eta_1\perp 
C_\pi\eta_2$, then $\cK^\pi_{\eta_1}\perp
\cK^\pi_{\eta_2}$ by Theorem \ref{Du_Mo}(d).
Following \cite{Fuh}, let $L^2_\pi(G)$ denote
the smallest closed subspace of $L^2(G)$ containing
$\cup_{\eta\in\cD_\pi}\cK^\pi_\eta$. A Gramm--Schmidt
process shows that, when $G$ is separable, there
exists a countable set $\{\eta_j:j\in J\}$ in 
$\cD_\pi$ such that $\{C_\pi\eta_j:j\in J\}$ is
an orthonormal basis of $\cH_\pi$. Then
$L^2_\pi(G)=\oplus_{j\in J}
\cK^\pi_{\eta_j}$ (see Theorem 2.33(c) of \cite{Fuh}). Note that
the cardinality of $J$ is $\dim(\cH_\pi)$.

Suppose $\pi$ and $\pi'$ are both square-integrable
representations of $G$ and $\pi'$ is not
equivalent to $\pi$. A simple argument (see, for example, Theorem 2.33(d) of \cite{Fuh}) shows that,
for $\eta\in\cD_\pi$ and $\eta'\in\cD_{\pi'}$,
$\cK^\pi_\eta\perp \cK^{\pi'}_{\eta'}$. Thus,
$L^2_\pi(G)\perp L^2_{\pi'}(G)$.
\begin{defi}
For a locally compact group $G$, let $\widehat{G}$
denote the set of equivalence classes of irreducible
representations of $G$. Let $\widehat{G}^r$ denote
the subset of $\widehat{G}$ consisting of 
equivalence classes of
square-integrable representations.
\end{defi}
We will sometimes abuse notation and use the same symbol for an irreducible representation and
its equivalence class. A locally compact group $G$ is called an [AR]-{\em group} when its left regular
representation is a direct sum of irreducible
representations. When $G$ is an [AR]-group, we
have
\[
L^2(G)=\oplus_{\pi\in\widehat{G}^r}L^2_\pi(G).
\]
From Theorem 2.33 of \cite{Fuh} and Theorem 5.2 of
\cite{GT}, we can formulate a generalization of
the Peter-Weyl Theorem for compact groups to
[AR]-groups. 
\begin{theorem}\label{Peter_Weyl_Gen}
Let $G$ be a separable {\rm [AR]}-group. For each $\pi
\in\widehat{G}^r$, let $\{\eta^\pi_j:j\in J_\pi\}$
be a set in $\cD_\pi$ such that 
$\{C_\pi\eta^\pi_j:j\in J_\pi\}$ is an orthonormal
basis of $\cH_\pi$
and let $\{\xi^\pi_k:k\in J_\pi\}$ be any orthonormal
basis of $\cH_\pi$. Then
\[
\cup_{\pi\in\widehat{G}^r}
\left\{V_{\eta^\pi_j}\xi^\pi_k:
(j,k)\in J_\pi\times J_\pi\right\}
\]
is an orthonormal basis of $L^2(G)$.
\end{theorem}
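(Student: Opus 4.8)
The plan is to show that, for each fixed $\pi\in\widehat{G}^r$, the family $\{V_{\eta^\pi_j}\xi^\pi_k:(j,k)\in J_\pi\times J_\pi\}$ is an orthonormal basis of $L^2_\pi(G)$, and then to assemble these across the equivalence classes $\pi$ using the orthogonal decomposition $L^2(G)=\oplus_{\pi\in\widehat{G}^r}L^2_\pi(G)$ that holds for every [AR]-group.

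First I would establish orthonormality. For a single $\pi$, part (d) of the Duflo--Moore Theorem \ref{Du_Mo} gives
\[
\langle V_{\eta^\pi_j}\xi^\pi_k, V_{\eta^\pi_{j'}}\xi^\pi_{k'}\rangle_{L^2(G)}
=\langle\xi^\pi_k,\xi^\pi_{k'}\rangle_{\cH_\pi}\,
\langle C_\pi\eta^\pi_{j'},C_\pi\eta^\pi_j\rangle_{\cH_\pi}.
\]
Since $\{\xi^\pi_k\}$ and $\{C_\pi\eta^\pi_j\}$ are both orthonormal, the right-hand side equals $\delta_{kk'}\delta_{jj'}$, so the family indexed by $J_\pi\times J_\pi$ is orthonormal. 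For two distinct classes $\pi\neq\pi'$, the vectors $V_{\eta^\pi_j}\xi^\pi_k$ and $V_{\eta^{\pi'}_{j'}}\xi^{\pi'}_{k'}$ lie in $L^2_\pi(G)$ and $L^2_{\pi'}(G)$ respectively, and these subspaces are orthogonal, so the inner product vanishes. Hence the entire collection is orthonormal.

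Next I would prove completeness within each $L^2_\pi(G)$. Because $\|C_\pi\eta^\pi_j\|_{\cH_\pi}=1$ for each $j$, the map $V_{\eta^\pi_j}$ is unitary from $\cH_\pi$ onto $\cK^\pi_{\eta^\pi_j}$; it therefore carries the orthonormal basis $\{\xi^\pi_k:k\in J_\pi\}$ of $\cH_\pi$ to an orthonormal basis $\{V_{\eta^\pi_j}\xi^\pi_k:k\in J_\pi\}$ of $\cK^\pi_{\eta^\pi_j}$. Since $\{C_\pi\eta^\pi_j:j\in J_\pi\}$ is an orthonormal basis of $\cH_\pi$, we have the orthogonal decomposition $L^2_\pi(G)=\oplus_{j\in J_\pi}\cK^\pi_{\eta^\pi_j}$, so the union of these per-$j$ orthonormal bases is an orthonormal basis of $L^2_\pi(G)$.

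Finally I would combine the pieces. Separability guarantees that all index sets in play are countable, so the direct sums are legitimate and the resulting union is a genuine orthonormal set. Using $L^2(G)=\oplus_{\pi\in\widehat{G}^r}L^2_\pi(G)$, the union over all $\pi$ of the orthonormal bases of the summands is an orthonormal basis of $L^2(G)$, which is exactly the asserted family. The step requiring the most care---and the one that really imports the [AR] hypothesis---is the completeness claim $L^2(G)=\oplus_{\pi}L^2_\pi(G)$: the orthonormality computation is a direct application of Duflo--Moore, but knowing that no part of $L^2(G)$ escapes the closed span of the coefficient functions $V_{\eta^\pi_j}\xi^\pi_k$ is precisely where the assumption that $G$ is an [AR]-group is indispensable.
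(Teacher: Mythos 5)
Your proposal is correct and follows exactly the route the paper intends: it assembles the facts recalled just before the theorem statement (the Duflo--Moore orthogonality relation for the inner products, the unitarity of $V_{\eta^\pi_j}$ onto $\cK^\pi_{\eta^\pi_j}$, the decomposition $L^2_\pi(G)=\oplus_{j}\cK^\pi_{\eta^\pi_j}$, the mutual orthogonality of $L^2_\pi(G)$ and $L^2_{\pi'}(G)$ for inequivalent classes, and the [AR] decomposition $L^2(G)=\oplus_{\pi}L^2_\pi(G)$), which is precisely the content of Theorem 2.33 of F\"uhr's monograph that the paper cites in lieu of a written-out proof. No gaps.
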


Note that, if $G$ is compact, then 
$\widehat{G}^r=\widehat{G}$. Moreover, each $\pi\in 
\widehat{G}$ is finite dimensional and
$C_\pi=\dim(\pi)^{-1/2}I_{\cH_\pi}$, where
$\dim(\pi)=\dim(\cH_\pi)$ and $I_{\cH_\pi}$ is
the identity operator on $\cH_\pi$.

In order for the kind of basis that is guaranteed by
Theorem \ref{Peter_Weyl_Gen} to be useful when
working with a particular [AR]-group, one usually
needs explicit descriptions of the square-integrable
representations on concrete Hilbert spaces and
a precise identification of the operator $C_\pi$,
for each such $\pi$. As a result of
Proposition \ref{open_orbit_sq_int}, 
this works out well for
groups of the form $\R^n\rtimes K$, where $K$ is
a closed subgroup of $\gln$ and there
exist open $K$-orbits $\cO_1,\cdots,\cO_m$ in
$\widehat{\R^n}$, each with compact stability subgroups, such that $\cup_{j=1}^m\cO_j$ is co-null
in $\widehat{\R^n}$. However, there exist [AR]-groups
of the form $\R^n\rtimes K$ where there are open
$K$-orbits, but the associated stability subgroups
are not compact. In fact, it was shown in \cite{BT},
example (iii), that $\R^2\rtimes {\rm GL}_2^+(\R)$,
where ${\rm GL}_2^+(\R)=\{A\in \gl2:\det(A)>0\}$,
is an [AR]-group. The same is true of the full
affine group in two dimensions $G_2$. It is
worthwhile formulating a proposition collecting known
results.
\begin{prop}\label{inductive_AR}
Let $K$ be
a closed subgroup of $\gln$ such that there
exist open $K$-orbits $\cO_1,\cdots,\cO_m$ in
$\widehat{\R^n}$ with 
$\cup_{j=1}^m\cO_j$ co-null
in $\widehat{\R^n}$. For $1\leq j\leq m$, select
$\uomega_j\in\cO_j$. Then $\R^n\rtimes K$ is an
{\rm [AR]}-group if and only if $K_{\uomega_j}$ is an
{\rm [AR]}-group for $1\leq j\leq m$.
\end{prop}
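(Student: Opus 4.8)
The plan is to apply Mackey's normal subgroup analysis to $G=\R^n\rtimes K$ relative to the abelian normal subgroup $N=\R^n$. The hypothesis that the open orbits $\cO_1,\dots,\cO_m$ exhaust a co-null subset of $\widehat{\R^n}$ guarantees that the dual action of $K$ on $\widehat{\R^n}$ is regular: up to a null set the orbit space is the finite discrete set $\{\cO_1,\dots,\cO_m\}$. Consequently the Mackey machine applies with no residual term, and the decomposition of $\lambda_G$ will split cleanly as a genuine direct sum over these finitely many orbits rather than merely as a direct integral.

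First I would decompose the regular representation along $N$. Induction in stages gives $\lambda_G\cong{\rm ind}_N^G\lambda_N$, and Plancherel on the abelian group $N$ gives $\lambda_N\cong\int^\oplus_{\widehat{\R^n}}\chi_\uomega\,d\uomega$, whence $\lambda_G\cong\int^\oplus_{\widehat{\R^n}}{\rm ind}_N^G\chi_\uomega\,d\uomega$. Since $\chi_\uomega$ and $\chi_{\uomega'}$ are $K$-conjugate when $\uomega,\uomega'$ lie in the same orbit, we have ${\rm ind}_N^G\chi_\uomega\cong{\rm ind}_N^G\chi_{\uomega'}$ there; breaking the integral over the orbits then yields an orthogonal decomposition $L^2(G)=\bigoplus_{j=1}^m\cH_j$ into $\lambda_G$-invariant subspaces, with $\lambda_G|_{\cH_j}$ a (possibly infinite) multiple of ${\rm ind}_N^G\chi_{\uomega_j}$. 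Next I would recognize this induced representation as coming from the stabilizer $G_j=\R^n\rtimes K_{\uomega_j}$: because $K_{\uomega_j}$ fixes $\uomega_j$, the character $\chi_{\uomega_j}$ extends to $G_j$ trivially on $K_{\uomega_j}$, and the tensor identity together with ${\rm ind}_N^{G_j}1_N\cong\lambda_{K_{\uomega_j}}$ (lifted to $G_j$) gives ${\rm ind}_N^{G_j}\chi_{\uomega_j}\cong\chi_{\uomega_j}\otimes\lambda_{K_{\uomega_j}}$. A second use of induction in stages then produces
$$ {\rm ind}_N^G\chi_{\uomega_j}\cong{\rm ind}_{G_j}^G\big(\chi_{\uomega_j}\otimes\lambda_{K_{\uomega_j}}\big). $$
As passage to a multiple cannot affect whether a representation is a direct sum of irreducibles, $G$ is an [AR]-group if and only if each ${\rm ind}_{G_j}^G(\chi_{\uomega_j}\otimes\lambda_{K_{\uomega_j}})$ is a direct sum of irreducibles.

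For one direction, suppose each $K_{\uomega_j}$ is [AR], so $\lambda_{K_{\uomega_j}}\cong\bigoplus_k\tau_{j,k}$ with each $\tau_{j,k}$ irreducible. Then $\chi_{\uomega_j}\otimes\lambda_{K_{\uomega_j}}\cong\bigoplus_k(\chi_{\uomega_j}\otimes\tau_{j,k})$, each summand an irreducible representation of $G_j$ attached to the point $\uomega_j$ of the orbit $\cO_j$. Since induction commutes with direct sums and, by Mackey's irreducibility criterion, ${\rm ind}_{G_j}^G(\chi_{\uomega_j}\otimes\tau_{j,k})$ is irreducible for every $k$, each $\cH_j$ is a direct sum of irreducibles, and hence so is $L^2(G)$; that is, $G$ is [AR].

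The converse, which I expect to be the main obstacle, runs the same correspondence backwards. Here it is not enough that $\tau\mapsto{\rm ind}_{G_j}^G(\chi_{\uomega_j}\otimes\tau)$ \emph{produces} irreducibles; one needs that this Mackey correspondence is a bijection (up to equivalence) of $\widehat{K_{\uomega_j}}$ onto the part of $\widehat{G}$ lying over $\cO_j$ which not only preserves but \emph{reflects} direct-sum-of-irreducibles decompositions, so that any decomposition of ${\rm ind}_{G_j}^G(\chi_{\uomega_j}\otimes\lambda_{K_{\uomega_j}})$ into irreducibles descends to one of $\lambda_{K_{\uomega_j}}$. This reflective property is precisely what the regularity of the action secured in the first step provides, and it forces $K_{\uomega_j}$ to be an [AR]-group, completing the equivalence.
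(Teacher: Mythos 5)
Your proposal is correct in outline, but it takes a genuinely different route from the paper, whose entire proof of this proposition is a citation to the calculations on page 595 of \cite{BT} and to Corollary 11.1 of \cite{KL}; what you have written is in substance a reconstruction of the argument contained in those references. The forward chain you set up --- $\lambda_G\cong\int^\oplus_{\widehat{\R^n}}{\rm ind}_N^G\chi_{\uomega}\,d\uomega$, collapse of the direct integral over each open orbit to a multiple of ${\rm ind}_N^G\chi_{\uomega_j}$ (which requires a measurable field of intertwiners, supplied by a Borel cross-section $\cO_j\to K$ of the orbit map, exactly the role played by the paper's $\gamma$ in Section 4), the tensor identity giving ${\rm ind}_N^{G_j}\chi_{\uomega_j}\cong\tilde\chi_{\uomega_j}\otimes(\lambda_{K_{\uomega_j}}\circ q)$ for $G_j=\R^n\rtimes K_{\uomega_j}$, and Mackey irreducibility for representations of the stabilizer lying over $\chi_{\uomega_j}$ --- is precisely the machinery the paper itself deploys later in the proof of Theorem \ref{mult_G_n}, so the forward implication is solid. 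The one place where you assert rather than argue is the crux of the converse: that $\tau\mapsto{\rm ind}_{G_j}^G(\chi_{\uomega_j}\otimes\tilde\tau)$ \emph{reflects} complete reducibility. The clean way to close this is to invoke the fact that, for a regular action, this correspondence induces an isomorphism of commutant von~Neumann algebras, $\bigl({\rm ind}_{G_j}^G(\chi_{\uomega_j}\otimes\tilde\tau)(G)\bigr)'\cong\tau(K_{\uomega_j})'$, together with the observation that a unitary representation is a direct sum of irreducibles exactly when its commutant is an atomic von~Neumann algebra; since a subrepresentation of a direct sum of irreducibles is again such a direct sum, $\lambda_G$ being {\rm [AR]} forces each $\cH_j$, hence ${\rm ind}_{G_j}^G\bigl(\chi_{\uomega_j}\otimes\widetilde{\lambda_{K_{\uomega_j}}}\bigr)$, to be completely reducible, and the commutant isomorphism then transports atomicity back to $\lambda_{K_{\uomega_j}}$. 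With that one lemma made explicit, your proof is complete and is, in effect, the Kleppner--Lipsman argument the paper cites; what your approach buys is a self-contained derivation, at the cost of having to develop (or quote precisely) the commutant half of the Mackey machine rather than only its irreducibility half.
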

\begin{proof}
This follows from the calculations on page 595 of
\cite{BT} or Corollary 11.1 of \cite{KL}.
\end{proof}

For example, in the group $G_2$, 
$K=\gl2$ and there are just
two $K$-orbits in $\widehat{\R^2}$. One orbit is
the trivial orbit $\{\u0\}$ and the other is
$\cO=\widehat{\R^2}\setminus\{\u0\}$. Pick 
$\uomega_0=(1,0)\in\cO$. Then, the stability 
subgroup is
$K_{(1,0)}=\{A\in\gl2:(1,0)A=(1,0)\}$. If
$A=\begin{pmatrix}
s & t\\
u & v
\end{pmatrix}$, then $(1,0)A=(s,t)$. So 
$A\in K_{(1,0)}$ exactly when $s=1$ and $t=0$.
Thus $K_{(1,0)}=\left\{\begin{pmatrix}
1 & 0\\
u & v
\end{pmatrix}:u\in\R,v\in\R^*\right\}$. Note
that $[u,v]\to\begin{pmatrix}
1 & 0\\
u & v
\end{pmatrix}$ is an isomorphism of $G_1$ with
$K_{(1,0)}$ and it is well-known that $G_1$ has
a regular representation that is an infinite
multiple of just one square-integrable representation
(see Example 1, Section 10, of \cite{KL}, for 
example). This is essentially the way 
$\R^2\rtimes {\rm GL}_2^+(\R)$ is shown to be an
[AR]-group in \cite{BT}.

\section{Induced Representations}
The main purpose of this study is to make the
various ingredients that appear in Theorem
\ref{Peter_Weyl_Gen}
explicit for $G_2$, the group of invertible affine transformations of the plane.
We make extensive use of induced representations. 

The
theory of induced representations for locally
compact groups in general was initially 
developed by Mackey in \cite{Mac}. The particular 
results we need are found in \cite{Fol} or \cite{KT}
and we briefly summarize them here. We introduce
a representation, equivalent to the
induced representation we need, for a particular situation that holds in all the cases that arise
in the study of $G_2$.

If $\pi$ is a unitary
representation of a closed subgroup $H$ of a 
locally compact group $G$, there are a variety of
ways (all resulting in mutually equivalent
representations) of defining the induced representation ${\rm ind}_H^G\pi$, which is a
representation of $G$. We will use the construction
presented in Section 6.1 of \cite{Fol} and 
following Proposition 2.28 of \cite{KT}.
With $G$ and $H$ fixed, a {\em rho-function} is
a Borel map $\rho:G\to[0,\infty)$ that is locally 
integable and satisfies
\begin{equation}\label{rho_function}
\rho(xh)=\frac{\Delta_H(h)}{\Delta_G(h)}\rho(x),
\text{ for }x\in G, h\in H.
\end{equation}
Let $p:G/H\to G$
be a cross-section of the $H$-cosets. That is,
$p(yH)\in yH$, for any $yH\in G/H$. 
There always exists a continuous rho-function
$\rho$ such that $\rho(x)>0$, for all $x\in G$, and
an associated regular Borel measure $\mu_\rho$ on $G/H$
such that Weil's integration formula holds. That is,
for any $f\in C_c(G)$
\begin{equation}\label{Weil}
\int_Gf(x)\rho(x)\,d\mu_G(x)=\int_{G/H}\int_H
f\big(p(\omega)h\big)\,d\mu_H(h)\,d\mu_\rho(\omega).
\end{equation}
See Theorem 1.18, Lemma 1.20, and Corollary 1.21
of \cite{KT}. If $\rho$ is a continuous and 
strictly positive rho-function on $G$, \eqref{rho_function} implies that, for $x,y\in G$, 
$\frac{\rho(x^{-1}yh)}{\rho(yh)}=
\frac{\rho(x^{-1}y)}{\rho(y)}$, for all $h\in H$. 
We need this term $\frac{\rho(x^{-1}y)}{\rho(y)}$
in the definition of ${\rm ind}_H^G\pi$. 
Fix a 
continuous, strictly positive, rho-function $\rho$ 
and let $\mu_\rho$ be the associated measure on 
$G/H$ so that \eqref{Weil} holds. The Hilbert
space of ${\rm ind}_H^G\pi$ is denoted 
$\cH_{{\rm ind}\pi}$ and consists of all equivalence
classes of weakly measurable 
functions $\xi:G\to\cH_\pi$ such that
$\xi(xh)=\pi(h^{-1})\xi(x)$, for all $h\in H$ and
$\mu_G$-almost all $x\in G$, and 
$\int_{G/H}\|\xi\big(p(\omega)\big)\|_{\cH_\pi}^{\,\,2}d\mu_\rho(\omega)<\infty$. Note that the latter 
condition does not depend on the choice of the
cross-section $p$. For $\xi_1,\xi_2\in
\cH_{{\rm ind}\pi}$, the inner product is given
by
\[
\langle\xi_1,\xi_2\rangle_{\cH_{{\rm ind}\pi}}=
\int_{G/H}\langle\xi_1\big(p(\omega)\big),
\xi_2\big(p(\omega)\big)\rangle_{\cH_\pi}
d\mu_\rho(\omega).
\]
Then $\cH_{{\rm ind}\pi}$ is a Hilbert space. For
$x\in G$ and $\xi\in\cH_{{\rm ind}\pi}$, let
\begin{equation}\label{ind_formula}
{\rm ind}_H^G\pi(x)\xi(y)=\textstyle
\left[\frac{\rho(x^{-1}y)}{\rho(y)}\right]^{1/2}
\xi(x^{-1}y), \text{ for } \mu_G-{\rm a.e.} \,
y\in G.
\end{equation}
This defines a unitary operator ${\rm ind}_H^G\pi(x)$
on $\cH_{{\rm ind}\pi}$, for each $x\in G$, and
a representation ${\rm ind}_H^G\pi$ of $G$. See
\cite{KT} or \cite{Fol} for more details. Using
a different rho-function in the definition results
in an equivalent representation. In some 
situations, 
one can show that ${\rm ind}_H^G\pi$
is equivalent to a representation acting on a
more natural Hilbert space of functions
on a space that has a natural $G$-action. One
 situation that is useful to us 
is treated in Example 2.29 of \cite{KT}. However,
there is an error in the definition of the rho-function in the last line of page 74 of \cite{KT},
so we will point out the details necessary to
correct the formula given there.

Suppose $H$ is a closed subgroup of $G$ that is
{\em complemented} in the sense that there exists
a closed subgroup $K$ of $G$ such that $K\cap H=
\{e\}$, where $e$ is the identity element of
$G$, and the map $(k,h)\to kh$ is a homeomorphism
of $K\times H$ onto $G$. When this holds, 
the restriction $q|_K$ of the quotient map $q$ to
$K$ is a homeomorphism of $K$ with $G/H$. For each
$x\in G$, there exist unique $k_x\in K$ and $h_x
\in H$ such that $x=k_xh_x$. We take the cross-section $p:G/H\to G$ to be given by $p(xH)=k_x$, for 
any $xH\in G/H$. Then $p$ is a homeomorphism of
$G/H$ with its image $K$. The left action of 
$G$ on $G/H$ is transferred to a left action of $G$
on $K$ by this homeomorphism. That is, for $x\in G$ 
and $k\in K$, $x\cdot k=k_{xk}$. 
Define $\rho:G\to\R^+$
by $\rho(x)=\frac{\Delta_H(h_x)}{\Delta_G(h_x)}$,
for all $x\in G$. For $x\in G$ and $h\in H$,
$h_{xh}=h_xh$, so $\rho(xh)=
\frac{\Delta_H(h)}{\Delta_G(h)}\rho(x)$. Thus,
$\rho$ is a rho-function on $G$ that is obviously
continuous and positive. Since $h_{kx}=h_x$, 
we have $\rho(kx)=\rho(x)$, for 
$k\in K$ and $x\in G$. 

The measure $\mu_\rho$ that 
is associated to $\rho$ so that \eqref{Weil} holds
can be moved to $K$ by the homeomorphism $q|_K$.
That is, let $\tilde{\mu}_\rho(E)=
\mu_\rho\big(q(E)\big)$, 
for any Borel $E\subseteq K$, and
$\int_K\psi(k)\,d\tilde{\mu}_\rho(k)=
\int_{G/H}\psi\big(p(\omega)\big)\,d\mu_\rho(\omega)$, for any $\psi\in C_c(K)$. It turns out that
$\tilde{\mu}_\rho$ is left Haar measure on $K$.
To see this, let $\varphi\in C_c(K)$ and $\ell
\in K$. Let $\varphi'\in C_c(G/H)$ be defined by
$\varphi'(kH)=\varphi(k)$, for all $k\in K$. By
Proposition 1.9 of \cite{KT}, there
exists an $f\in C_c(G)$ so that
$\varphi(k)=\varphi'(kH)=\int_Hf(kh)\,d\mu_H(h)$,
for all $k\in K$. Now, using \eqref{Weil}, we have
\[
\int_K\varphi(\ell k)\,d\tilde{\mu}_\rho(k)=
\int_{G/H}\int_Hf\big(\ell p(\omega)h\big)\,d\mu_H(h)\,d\mu_\rho(\omega)
=\int_Gf(\ell x)\rho(x)\,d\mu_G(x).
\]
But $\rho(\ell x)=\rho(x)$, for every $x\in G$, and
left invariance of $\mu_G$ yields
$\int_K\varphi(\ell k)\,d\tilde{\mu}_\rho(k)=
\int_Gf(x)\rho(x)\,d\mu_G(x)=
\int_K\varphi(k)\,d\tilde{\mu}_\rho(k)$. Since
$\varphi\in C_c(K)$ and $\ell\in K$ were
arbitrary, $\tilde{\mu}_\rho$ must be  
left Haar measure of $K$. This normalizes the
choices of left Haar measures on $G$, $H$, and $K$
so that
\begin{equation}
\textstyle
\int_K\int_Hf(kh)\,d\mu_H(h)\,d\mu_K(k)=
\int_Gf(x)\frac{\Delta_H(h_x)}{\Delta_G(h_x)}\,
d\mu_G(x), \text{ for all }f\in C_c(G).
\end{equation}

Let $L^2(K,\cH_\pi)$ denote the space of 
$\cH_\pi$-valued weakly measurable functions $F$ on
$K$ such that $\int_K\|F(k)\|_{\cH_\pi}^{\,\,2}
d\mu_K(k)<\infty$. As usual, identify functions
that agree $\mu_K$-almost everywhere and equip
$L^2(K,\cH_\pi)$ with the natural inner product.
Then $L^2(K,\cH_\pi)$  is a Hilbert space that is
Hilbert space isomorphic to $\cH_{{\rm ind}\pi}$.
To define the unitary map $W:\cH_{{\rm ind}\pi}\to
L^2(K,\cH_\pi)$, let $\cX$ consist of all
elements of $\cH_{{\rm ind}\pi}$ that are continuous
(more precisely, have a continuous member of the
equivalence class) and let $\cY$ denote the
continuous members of $L^2(K,\cH_\pi)$. Then 
$\cX$ is a dense subspace of $\cH_{{\rm ind}\pi}$ and
$\cY$ is a dense subspace of $L^2(K,\cH_\pi)$.
For $\xi\in \cX$, let $W\xi$ denote the restriction
of $\xi$ to $K$. Then $W\xi\in\cY$. For any $F\in\cY$, define $\xi:G\to \cH_\pi$ by 
$\xi(x)=\pi(h_x^{-1})F(k_x)$, for all $x\in G$. Then
$\xi$ is continuous, $\xi(xh)=\pi(h^{-1})\xi(x)$,
for all $x\in G, h\in H$, and
$\int_{G/H}
\|\xi\big(p(\omega)\big)\|_{\cH_\pi}^{\,\,2}
d\mu_\rho(\omega)=\int_K
\|\xi(k)\|_{\cH_\pi}^{\,\,2}
d\mu_K(k)=\int_K
\|F(k)\|_{\cH_\pi}^{\,\,2}
d\mu_K(k)<\infty$. Thus $\xi\in\cX$ and, clearly,
$W\xi=F$. So $W$ is an
isometry of $\cX$ onto $\cY$. Therefore, $W$
extends to a unitary map, also denoted $W$, of
$\cH_{{\rm ind}\pi}$ onto $L^2(K,\cH_\pi)$.

We can now formulate a proposition that corrects 
and clarifies the expression given in Example 2.29
of \cite{KT} for a representation equivalent to 
${\rm ind}_H^G\pi$ in the special situation we are considering.

\begin{prop}\label{KH_ind}
Let $H$ and $K$ be closed subgroups of a 
locally compact group $G$ that satisfy $K\cap H=
\{e\}$ and $(k,h)\to kh$ is a
homeomorphism of $K\times H$ onto $G$. Let
$\pi$ be a representation of $H$. Then
${\rm ind}_H^G\pi$ is equivalent to $\sigma^\pi$
acting on $L^2(K,\cH_\pi)$ by
\begin{equation}\label{KH_ind_formula}
\sigma^\pi(x)F(k)=\textstyle
\left[\frac{\Delta_H(h_{x^{-1}k})}{\Delta_G(h_{x^{-1}k})}\right]^{1/2}\pi\left(h_{x^{-1}k}^{\,\,\,\,-1}\right)F(x^{-1}\cdot k), \text{ for a.e. } k\in K,
\end{equation}
for all $F\in L^2(K,\cH_\pi)$ and for every $x\in G$.
\end{prop}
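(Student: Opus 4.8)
The plan is to exploit the unitary $W:\cH_{{\rm ind}\pi}\to L^2(K,\cH_\pi)$ constructed in the paragraphs preceding the proposition, namely the map sending a continuous representative $\xi\in\cX$ to its restriction $\xi|_K$, whose inverse on the dense subspace $\cY$ is $W^{-1}F(x)=\pi(h_x^{-1})F(k_x)$. Since $W$ is already known to be a Hilbert space isomorphism, establishing the equivalence reduces to verifying the single intertwining identity $W\circ{\rm ind}_H^G\pi(x)=\sigma^\pi(x)\circ W$ for every $x\in G$, where $\sigma^\pi$ is the operator family defined in \eqref{KH_ind_formula}.

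First I would observe that \eqref{KH_ind_formula} need not be independently checked to define a representation: once the intertwining identity holds, $\sigma^\pi$ is a unitary conjugate of ${\rm ind}_H^G\pi$ and is therefore automatically a representation. So I would instead conjugate the known representation and read off the formula. I would carry out the computation on the dense subspace $\cX$ of continuous elements, where pointwise evaluation is legitimate, and then extend by continuity. Given $F\in\cY$, set $\xi=W^{-1}F\in\cX$, so $\xi(y)=\pi(h_y^{-1})F(k_y)$. Applying \eqref{ind_formula} gives ${\rm ind}_H^G\pi(x)\xi(y)=[\rho(x^{-1}y)/\rho(y)]^{1/2}\xi(x^{-1}y)$; since $W$ is restriction to $K$, the transported operator $W\,{\rm ind}_H^G\pi(x)\,W^{-1}$ evaluated at $k\in K$ equals $[\rho(x^{-1}k)/\rho(k)]^{1/2}\xi(x^{-1}k)$.

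The computation then rests on three elementary identifications, each immediate from the complementation hypothesis. First, for $k\in K$ one has $h_k=e$, so $\rho(k)=\Delta_H(e)/\Delta_G(e)=1$ and the denominator drops out. Second, writing the unique factorization $x^{-1}k=k_{x^{-1}k}\,h_{x^{-1}k}$, the transferred action recorded before the proposition gives $k_{x^{-1}k}=x^{-1}\cdot k$, so that $\xi(x^{-1}k)=\pi(h_{x^{-1}k}^{-1})F(x^{-1}\cdot k)$. Third, by the explicit form of the rho-function, $\rho(x^{-1}k)=\Delta_H(h_{x^{-1}k})/\Delta_G(h_{x^{-1}k})$. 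Assembling these three facts produces exactly the right-hand side of \eqref{KH_ind_formula}, proving $W\circ{\rm ind}_H^G\pi(x)=\sigma^\pi(x)\circ W$ on $\cX$.

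I do not expect a serious obstacle: because $W$ has already been shown to be unitary, the content is a bookkeeping calculation with the cocycle $\rho$ and the cross-section $x\mapsto(k_x,h_x)$. The one point requiring care is the passage from the dense subspace to all of $L^2(K,\cH_\pi)$. Here I would note that $W\,{\rm ind}_H^G\pi(x)\,W^{-1}$ is a unitary operator, so agreement with $\sigma^\pi(x)$ on the dense set $\cY$ forces agreement everywhere, and the almost-everywhere qualifier in \eqref{KH_ind_formula} is precisely what this extension delivers. I would also take care to read $x^{-1}\cdot k$ through the $G$-action on $K\cong G/H$ transferred via $q|_K$, matching the notation fixed earlier, so that the substitution $k_{x^{-1}k}=x^{-1}\cdot k$ is unambiguous.
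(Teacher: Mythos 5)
Your argument is correct and follows essentially the same route as the paper: define $\sigma^\pi(x)=W\,{\rm ind}_H^G\pi(x)\,W^{-1}$, verify the formula on the dense subspace of continuous elements using $\rho(k)=1$ for $k\in K$, $\rho(x^{-1}k)=\Delta_H(h_{x^{-1}k})/\Delta_G(h_{x^{-1}k})$, and $W^{-1}F(x^{-1}k)=\pi(h_{x^{-1}k}^{-1})F(x^{-1}\cdot k)$, then extend by density. Nothing is missing.
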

\begin{proof}
For each $x\in G$, let 
$\sigma^\pi(x)=W{\rm ind}_H^G(x)W^{-1}$. Then
$\sigma^\pi$ is a representation of $G$ whose Hilbert space is $L^2(K,\cH_\pi)$. To show that $\sigma^\pi$
satisfies \eqref{KH_ind_formula},
it suffices to verify \eqref{KH_ind_formula} for
$F\in\cY$, where $\cY$ is the dense subspace defined
above. Let $F\in\cY$ and let $\xi=W^{-1}F\in\cX$.
For each $x\in G$,
\[
\begin{split}
\sigma^\pi(x)F(k) & =
W\big({\rm ind}_H^G\pi(x)\xi\big)(k)=
{\rm ind}_H^G\pi(x)\xi(k)=\textstyle
\left[\frac{\rho(x^{-1}k)}{\rho(k)}\right]^{1/2}\xi(x^{-1}k)\\
& = \textstyle
\left[\frac{\rho(x^{-1}k)}{\rho(k)}\right]^{1/2}W^{-1}F(x^{-1}k)=
\left[\frac{\rho(x^{-1}k)}{\rho(k)}\right]^{1/2}\pi\left(h_{x^{-1}k}^{\,\,\,\,-1}\right)F(x^{-1}\cdot k),
\end{split}
\]
for all $k\in K$. But 
$\rho(z)=\frac{\Delta_H(h_z)}{\Delta_G(h_z)}$, for
all $z\in G$. Thus 
$\rho(x^{-1}k)=
\frac{\Delta_H(h_{x^{-1}k})}{\Delta_G(h_{x^{-1}k})}$,
while $\rho(k)=1$, for any $k\in K$. Therefore,
\eqref{KH_ind_formula} holds.
\end{proof}

This formula simplifies further when $H$ is 
an abelian normal subgroup and $\pi$ is a one
dimensional representation of $H$. 
When this simplified form is used below, 
$H$ is actually a vector subgroup.
Suppose $n\in \N$ and $K_0$ is a closed subgroup
of $\gln$. Let 
\[
G=\R^n\rtimes K_0=\{[\ux,A]:\ux\in\R^n, A\in K_0\}.
\]
Let $H=\{[\ux,{\rm id}]:\ux\in\R^n\}$. Then 
$H$ is an abelian normal closed subgroup of $G$.
Let $K=\{[\u0,A]:A\in K_0\}$, a closed subgroup
of $G$. We have $K\cap H=\{[\u0,{\rm id}]\}$
and $\big([\u0,A],[\ux,{\rm id}]\big)\to
[\u0,A][\ux,{\rm id}]=[A\ux,A]$ is a homeomorphism
of $K\times H$ with $G$. Note that
\begin{equation}
k_{[\ux,A]}=[\u0,A]\quad\text{and}\quad
h_{[\ux,A]}=[A^{-1}\ux,{\rm id}], \text{ for all }
[\ux,A]\in G.
\end{equation}

The modular function of $G$ is given by
\[
\Delta_G[\ux,A]=\frac{\Delta_{K_0}(A)}{|\det(A)|},
\text{ for all } [\ux,A]\in G.
\]
Note that $\Delta_G\equiv 1$ on $H$ and $H$, itself, 
is unimodular. So $\left[\frac{\Delta_H[\ux,\u0]}{\Delta_G[\ux,\u0]}\right]^{1/2}=1$, for all 
$[\ux,\u0]\in H$.

The irreducible representations of $H$ are 
all of the form $\chi_{\uomega}$, for $\uomega
\in\widehat{\R^n}$, where
\[
\chi_{\uomega}[\ux,{\rm id}]=e^{2\pi i\uomega\ux},
\text{ for all } [\ux,{\rm id}]\in H.
\]
\begin{cor}\label{KH_ind_2}
Let $G=\R^n\rtimes K_0$, where $K_0$ is a closed
subgroup of $\gln$. Let $H=\{[\ux,{\rm id}]:
\ux\in\R^n\}$ and let $\uomega\in\widehat{\R^n}$.
Then ${\rm ind}_H^G\chi_{\uomega}$ is unitarily
equivalent to $\sigma^{\uomega}$, which
acts on $L^2(K_0)$ as follows: For $[\ux,A]\in G$
and $f\in L^2(K_0)$,
\[
\sigma^{\uomega}[\ux,A]f(B)=
e^{2\pi i\uomega B^{-1}\ux}f(A^{-1}B), \text{ for all }
B\in K_0.
\]
\end{cor}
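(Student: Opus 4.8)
The plan is to apply Proposition \ref{KH_ind} directly with the given $G$, $H$, and $K$, specialized to the one-dimensional representation $\pi=\chi_{\uomega}$. Since $\chi_{\uomega}$ acts on $\cH_\pi=\C$, the Hilbert space $L^2(K,\cH_\pi)$ appearing in Proposition \ref{KH_ind} reduces to $L^2(K)$, and the topological group isomorphism $A\mapsto[\u0,A]$ of $K_0$ onto $K$ carries Haar measure to Haar measure, so I identify $L^2(K)$ with $L^2(K_0)$ via $F([\u0,B])=f(B)$. It then remains only to evaluate each factor of \eqref{KH_ind_formula} on the pair $x=[\ux,A]$, $k=[\u0,B]$ and to read off the result.

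First I would compute the group products. From $[\ux,A]^{-1}=[-A^{-1}\ux,A^{-1}]$ I obtain $x^{-1}k=[-A^{-1}\ux,A^{-1}B]$. Substituting into the formula $h_{[\uy,C]}=[C^{-1}\uy,{\rm id}]$ recorded before the corollary, and using $(A^{-1}B)^{-1}=B^{-1}A$, the leading $A^{-1}$ cancels and I get $h_{x^{-1}k}=[-B^{-1}\ux,{\rm id}]$. Similarly, $x^{-1}\cdot k=k_{x^{-1}k}=[\u0,A^{-1}B]$, which corresponds to $A^{-1}B\in K_0$ under the identification above.

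Next I would dispose of the two scalar factors in \eqref{KH_ind_formula}. Since $h_{x^{-1}k}\in H$ and it was already noted that $\big[\Delta_H/\Delta_G\big]^{1/2}\equiv1$ on $H$, the modular factor is $1$. For the representation factor, $h_{x^{-1}k}^{\,-1}=[B^{-1}\ux,{\rm id}]$, whence $\chi_{\uomega}\big(h_{x^{-1}k}^{\,-1}\big)=e^{2\pi i\uomega B^{-1}\ux}$. Feeding all of this into \eqref{KH_ind_formula} and transporting through $F([\u0,B])=f(B)$ produces exactly
\[
\sigma^{\uomega}[\ux,A]f(B)=e^{2\pi i\uomega B^{-1}\ux}\,f(A^{-1}B),
\]
as claimed.

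There is no serious obstacle; the corollary is a specialization of Proposition \ref{KH_ind}, and the only delicate point is the semidirect-product bookkeeping. The step most prone to error is verifying that the exponent carries $B^{-1}$ rather than $A^{-1}$: this is forced by the conjugation $(A^{-1}B)^{-1}$ hidden inside $h_{x^{-1}k}$, so keeping the inverses and the order of multiplication straight in the nonabelian product is where I would be most careful.
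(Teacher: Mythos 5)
Your proposal is correct and follows essentially the same route as the paper: specialize Proposition \ref{KH_ind} to $\pi=\chi_{\uomega}$, compute $h_{[\ux,A]^{-1}[\u0,B]}=[-B^{-1}\ux,{\rm id}]$ and $k_{[\ux,A]^{-1}[\u0,B]}=[\u0,A^{-1}B]$, note the modular factor is identically $1$, and transport to $L^2(K_0)$ via the obvious unitary. All the bookkeeping, including the cancellation giving $B^{-1}\ux$ in the exponent, matches the paper's proof.
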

\begin{proof}
For $[\ux,A]\in G$ and $[\u0,B]\in K$,
$[\ux,A]^{-1}[\u0,B]=[-A^{-1}\ux,A^{-1}B]$, so
\[
[\ux,A]^{-1}\cdot[\u0,B]=
k_{[\ux,A]^{-1}[\u0,B]}=[\u0,A^{-1}B]\quad
\text{and}\quad h_{[\ux,A]^{-1}[\u0,B]}=
[-B^{-1}\ux,{\rm id}].
\]
By Proposition \ref{KH_ind}, 
${\rm ind}_H^G\chi_{\uomega}$ is unitarily
equivalent to $\sigma^{\chi_{\uomega}}$ acting on
$L^2(K)$ by, for $[\ux,A]\in G$ and $f\in L^2(K)$,
\[
\sigma^{\chi_{\uomega}}[\ux,A]f[\u0,B]=
\chi_{\uomega}\left(h_{[\ux,A]^{-1}[\u0,B]}^{\,\,\,-1}\right)f[\u0,A^{-1}B]=
e^{2\pi i\uomega B^{-1}\ux}f[\u0,A^{-1}B],
\]
for all $[\u0,B]\in K$. Let $U:L^2(K)\to L^2(K_0)$
be the obvious unitary map $Uf(B)=f[\u0,B]$, for
all $B\in K_0$ and $f\in L^2(K_0)$. Then 
define $\sigma^{\uomega}$ acting on $L^2(K_0)$ by
$\sigma^{\uomega}[\ux,A]=
U\sigma^{\chi_{\uomega}}[\ux,A]U^{-1}$, for all
$[\ux,A]\in G$. Thus,
$\sigma^{\uomega}[\ux,A]f(B)=
e^{2\pi i\uomega B^{-1}\ux}f(A^{-1}B)$, for all 
$B\in K_0$ and 
${\rm ind}_H^G\chi_{\uomega}$ is unitarily
equivalent to both $\sigma^{\chi_{\uomega}}$
and $\sigma^{\uomega
}$.
\end{proof}

\section{The Affine Group on $\R^n$}
\noindent  
In this section, we decompose the left regular
representation of $G_n$ as an infinite multiple
of a representation induced from a character
of the normal vector subgroup we now call $N$, where
$N=\{[\uy,{\rm id}]:\uy\in\R^n\}$. Note that,
for $[\ux,A]\in G_n$ and $[\uy,{\rm id}]\in N$,
\[
[\ux,A]^{-1}[\uy,{\rm id}][\ux,A]=
\left[-A^{-1}\ux,A^{-1}\right][\uy+\ux,A]=
\left[A^{-1}\uy,{\rm id}\right].
\]
For 
$\uomega\in\widehat{\R^n}$, we continue to denote by
$\chi_{\uomega}$ the character of $N$ given by
\[
\chi_{\uomega}[\uy,{\rm id}]=e^{2\pi i\uomega
\uy},\quad\text{for all}\,\,[\uy,{\rm id}]
\in N.
\]
Then $\widehat{N}=\{\chi_{\uomega}:\uomega
\in\widehat{\R^n}\}$.
For $[\ux,A]\in G_n$ and $\chi\in\widehat{N}$,
$[\ux,A]\cdot\chi\in\widehat{N}$ is defined
by
\[
\big([\ux,A]\cdot\chi\big)[\uy,{\rm id}]=
\chi\big([\ux,A]^{-1}[\uy,{\rm id}][\ux,A]
\big)=\chi\left[A^{-1}\uy,{\rm id}\right].
\]
Therefore, the action of $G_n$ on $\widehat{N}$
reduces to, for $[\ux,A]\in G_n$ and 
$\uomega\in\widehat{\R^n}$,
$[\ux,A]\cdot\chi_{\uomega}=\chi_{\uomega
A^{-1}}$. All information about this action is obtained from 
the action of $\gln$ on $\widehat{\R^n}$
given by $(A,\uomega)\to\uomega A^{-1}$.
There are just two orbits, $\{\u0\}$ and
$\cO_n =\widehat{\R^n}\setminus\{\u0\}$. Let
$\uomega_0=(1,0,\cdots,0)$. Then
$\cO_n =\{\uomega_0A^{-1}:A\in\gln\}=
\{\uomega_0A:A\in\gln\}$. It is not difficult
to construct a measurable map $\gamma:\cO_n \to\gln$ that
satisfies  
\begin{equation}\label{gamma_defn}
\uomega_0\gamma(\uomega)^{-1}=\uomega
\,\,\text{(equivalently}\,\,
\uomega\gamma(\uomega)=\uomega_0),\quad
\text{for all}\,\,\uomega\in\cO_n .
\end{equation}
Fix such a map $\gamma$. 
For $\uomega\in\widehat{\R^n}$, by Corollary
\ref{KH_ind_2},
the induced
representation ${\rm ind}_N^{G_n}\chi_{\uomega}$
is unitarily equivalent to a representation we denote
$\pi^{\uomega}$
which acts on $L^2\big(\gln\big)$ as follows:
For $[\ux,A]\in G_n$,
\begin{equation}\label{pi_omega}
\pi^{\uomega}[\ux,A]f(B)=
e^{2\pi i\uomega B^{-1}\ux}f\left(A^{-1}B\right),
\quad\text{for all}\,\,B\in\gln, f\in 
L^2\big(\gln\big).
\end{equation}
It is a basic result of Mackey theory that inducing
two representations from the same orbit results in
equivalent representations (see Proposition 2.39 of
\cite{KT}, for example). Thus, for each $\uomega
\in\cO_n $, we should have  $\pi^{\uomega}\sim
\pi^{\uomega_0}$. There is some value in identifying
the unitary map that institutes this equivalence.
Define $V_{\uomega}:L^2\big(\gln\big)\to
L^2\big(\gln\big)$ by, for 
$f\in L^2\big(\gln\big)$,
\[
V_{\uomega}f(B)=f\big(B\gamma(\uomega)^{-1}
\big),\quad\text{for all}\,\,B\in\gln.
\]
Since $\gln$ is unimodular,
$\|V_{\uomega}f\|_2=\|f\|_2$, for all
$f\in L^2\big(\gln\big)$. It is also clear
that $V_{\uomega}$ is linear, one to one, and
onto. So $V_{\uomega}$ is a unitary map of
$L^2\big(\gln\big)$ with itself. For
$[\ux,A]\in G_n$, $f\in L^2\big(\gln\big)$,
and $B\in\gln$,
\[
V_{\uomega}\pi^{\uomega}[\ux,A]
V_{\uomega}^{-1}f(B)=\pi^{\uomega}[\ux,A]
V_{\uomega}^{-1}f\big(B\gamma(\uomega)^{-1}\big)
=e^{2\pi i\uomega\gamma(\uomega)B^{-1}\ux}
V_{\uomega}^{-1}f\big(A^{-1}B\gamma(\uomega)^{-1}\big)
\qquad\qquad
\]
\[
\qquad\qquad\quad\,
=e^{2\pi i\uomega\gamma(\uomega)B^{-1}\ux}
f\big(A^{-1}B\big)=e^{2\pi i\uomega_0 B^{-1}\ux}
f\big(A^{-1}B\big)=\pi^{\uomega_0}[\ux,A]
f(B).
\]
This shows that $\pi^{\uomega}\sim
\pi^{\uomega_0}$.

Our goal now is to establish an explicit
unitary equivalence of the left regular
representation, $\lambda_{G_n}$, of $G_n$
with an infinite multiple of $\pi^{\uomega_0}$.

For any $f\in L^2(G_n)$, define $Uf$ on $\R^n\times
\gln$ by $Uf(\uy,B)=f[B\uy,B]$,
for all $(\uy,B)\in\R^n\times\gln$. It is 
straightforward to verify that
$U$ is a unitary map of $L^2(G_n)$ onto 
$L^2(\R^n\times\gln)$ when 
$\R^n\times\gln$ is equipped with the
product of Lebesgue measure on $\R^n$ with Haar measure
on $\gln$. Moreover, $U^{-1}:
L^2\big(\R^n\times\gln\big)\to L^2(G_n)$ is
given by
\[
U^{-1}f[\uy,B]=f(B^{-1}\uy,B),\quad
\text{for}\,\,[\uy,B]\in G_n, f\in 
L^2\big(\R^n\times\gln\big).
\]

Let $\cF_1:
L^2\big(\R^n\times\gln\big)\to
L^2\big(\widehat{\R^n}\times\gln\big)$ be
the unitary map such that, for $f\in C_c\big(\R^n\times\gln\big)$
and any $(\uomega,A)\in 
\widehat{\R^n}\times\gln$,
\[
\cF_1f(\uomega,B)=\int_{\R^n}f(\uy,B)
e^{2\pi i\uomega\uy}d\uy.
\]
The left regular representation $\lambda_{G_n}$
of $G_n$ is unitarily equivalent, via 
$\cF_1\circ U$ to a unitary representation
$\widetilde{\lambda_{G_n}}$ acting on
$L^2\big(\widehat{\R^n}\times\gln\big)$. That is
\[
\widetilde{\lambda_{G_n}}[\ux,A]=(\cF_1\circ U)
\lambda_{G_n}[\ux,A](\cF_1\circ U)^{-1},
\text{ for all } [\ux,A]\in G_n.
\]
A short computation shows that, for 
$f\in L^2\big(\widehat{\R^n}\times\gln\big)$,
$[\ux,A]\in G_n$,
and almost every 
$(\uomega,B)\in \widehat{\R^n}\times\gln$,
we have
\[
\widetilde{\lambda_{G_n}}[\ux,A]f(\uomega,B) 
 =e^{2\pi i\uomega B^{-1}\ux}
f\left(\uomega, A^{-1}B\right).
\]
Notice
the similarity with the $\pi^{\uomega}$. 
This shows how we could write the left
regular representation as a direct integral
of the $\pi^{\uomega}$. But 
$\pi^{\uomega}$
is equivalent to $\pi^{\uomega_0}$, for each
$\uomega\in\cO_n $. Thus,
$\lambda_{G_n}$ must be equivalent to a
multiple of $\pi^{\uomega_0}$. We will make 
this equivalence explicit.

Note that $\cO_n $ is a co-null open subset of
$\widehat{\R^n}$. We will consider Lebesgue
measure on $\cO_n $ as its standard measure,
so $L^2(\cO_n \times\gln)$ is the same Hilbert space as
$L^2\big(\widehat{\R^n}\times\gln\big)$.
We define $W$ on $L^2(\cO_n \times\gln)$ as 
follows: For $f\in L^2(\cO_n \times\gln)$, 
\[
(Wf)(\uomega,B)=
f\big(\uomega,B\gamma(\uomega)^{-1}\big),
\text{ for a.e. }
(\uomega,B)\in\cO_n \times\gln.
\]
Then $Wf$ is measurable and, using Fubini's Theorem and that 
$\gln$ is unimodular,
\[
\begin{split}
\int_{\gln}\int_{\cO_n }|(Wf)(\uomega,B)|^2
d\uomega\,d\mu_{\gln}(B) & =
\int_{\cO_n }\int_{\gln}
|f\big(\uomega,B\gamma(\uomega)^{-1}\big)|^2
d\mu_{\gln}(B)\,d\uomega\\
& = \int_{\gln}\int_{\cO_n }
|f\big(\uomega,B\big)|^2
d\uomega\,d\mu_{\gln}(B)=\|f\|_2^{\,\,2}.
\end{split} 
\]
Thus $Wf\in L^2(\cO_n \times\gln)$ and $W$ is
a linear isometry on $L^2(\cO_n \times\gln)$.
Clearly, $W$ is onto and $W^{-1}$ is given by
$\big(W^{-1}g\big)(\uomega,B)=
g\big(\uomega,B\gamma(\uomega)\big)$, for all
$(\uomega,B)\in\cO_n \times\gln$. So $W$ is a unitary.

Define the unitary representation 
$\lambda_{G_n}^0$ of $G_n$ on 
$L^2(\cO_n \times\gln)$ by, for $[\ux,A]\in G_n$,
\[
\lambda_{G_n}^0[\ux,A]=
W\widetilde{\lambda_{G_n}}[\ux,A]W^{-1}.
\]
So, for $f\in L^2\big(\cO_n \times\gln\big)$
and a.e. $(\uomega,B)\in\cO_n \times\gln$,
\[
\lambda_{G_n}^0[\ux,A]f(\uomega,B)=
W\widetilde{\lambda_{G_n}}[\ux,A]W^{-1}
f(\uomega,B)=
\widetilde{\lambda_{G_n}}[\ux,A]W^{-1}
f\big(\uomega,B\gamma(\uomega)^{-1}\big)
\]
\[
=e^{2\pi i\uomega\gamma(\uomega)B^{-1}\ux}
\left(W^{-1}f\right)\big(\uomega,A^{-1}B
\gamma(\uomega)^{-1}\big)=
e^{2\pi i\uomega_0B^{-1}\ux}
f\big(\uomega,A^{-1}B\big).
\]
Therefore, we have $\lambda_{G_n}^0[\ux,A]f(\uomega,B)=
e^{2\pi i\uomega_0B^{-1}\ux}
f\big(\uomega,A^{-1}B\big)$, for a.e.
$(\uomega,B)\in\cO_n \times\gln$, for all
$f\in L^2\big(\cO_n \times\gln\big)$, and all
$[\ux,A]\in G_n$. The representation
$\lambda_{G_n}^0$ is equivalent to 
$\lambda_{G_n}$.

Let $\eta
\in L^2(\cO_n)$ be such that 
$\|\eta\|_{L^2(\cO_n)}=1$.
Let $\cH_\eta=\{\eta\otimes g:g\in 
L^2\big(\gln\big)\}$, where
$\eta\otimes g$ is defined by
$(\eta\otimes g)(\uomega,B)=\eta(\uomega)g(B)$,
for a.e. $(\uomega,B)\in\cO_n \times\gln$. Then
$\cH_\eta$ is a closed subspace of 
$L^2\big(\cO_n \times\gln\big)$. Note that
\begin{equation}\label{lambda_0_equiv_pi_omega}
\lambda_{G_n}^0[\ux,A](\eta\otimes g)(\uomega,B)=
e^{2\pi i\uomega_0B^{-1}\ux}
\eta(\uomega)g\big(A^{-1}B\big)=
\eta(\uomega)\pi^{\uomega_0}[\ux,A]g(B),
\end{equation}
for a.e. $(\uomega,B)\in \cO_n \times\gln$ and all
$g\in L^2\big(\gln\big)$. 

Define 
$W_\eta:L^2\big(\gln\big)
\to L^2\big(\cO_n \times\gln\big)$
by $W_\eta f=\eta\otimes f$, 
for $f\in L^2\big(\gln\big)$.
Clearly, $W_\eta$ is a linear isometry with 
$W_\eta L^2(\cO_n)=\cH_\eta$, for any unit vector
$\eta$ in $L^2(\cO_n)$.
Moreover, \eqref{lambda_0_equiv_pi_omega} shows that
$\cH_\eta$ is $\lambda_{G_n}^0$-invariant and
$W_\eta$ intertwines $\pi^{\uomega_0}$ with
$\lambda_{G_n}^0$ restricted to $\cH_\eta$.
We can now exhibit $\lambda_{G_n}^0$ as a multiple
of $\pi^{\uomega_0}$.

\begin{prop} 
Let $\{\eta_j:j\in J\}$ be an orthonormal 
basis of $L^2(\cO_n )$. For each 
$j\in J$, $\cH_{\eta_j}$ is a closed 
$\lambda_{G_n}^0$-invariant subspace of 
$L^2(\cO_n \times\gln)$
and the map $W_{\eta_j}$ intertwines 
$\pi^{\uomega_0}$ with the restriction of 
$\lambda_{G_n}^0$ to $\cH_{\eta_j}$.
Moreover, 
$L^2(\cO_n \times\gln)
=\textstyle\sum^{\oplus}_{j\in J}\cH_{\eta_j}.
$
\end{prop}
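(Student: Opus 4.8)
The plan is to observe first that the two assertions about each individual $\cH_{\eta_j}$ carry no new content: every $\eta_j$ is a unit vector in $L^2(\cO_n)$, so the facts established in the discussion preceding the statement—that $\cH_\eta$ is a closed, $\lambda_{G_n}^0$-invariant subspace and that $W_\eta$ is an isometry intertwining $\pi^{\uomega_0}$ with $\lambda_{G_n}^0|_{\cH_\eta}$ for any unit vector $\eta$—apply verbatim with $\eta=\eta_j$. The real work is the orthogonal decomposition $L^2(\cO_n\times\gln)=\sum^{\oplus}_{j\in J}\cH_{\eta_j}$, which I would split into orthogonality and completeness.

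For orthogonality, a single application of Fubini's theorem to the product measure gives, for $j\neq k$ and $g,g'\in L^2(\gln)$,
\[
\langle\eta_j\otimes g,\eta_k\otimes g'\rangle_{L^2(\cO_n\times\gln)}
=\langle\eta_j,\eta_k\rangle_{L^2(\cO_n)}\,\langle g,g'\rangle_{L^2(\gln)}=0,
\]
so $\cH_{\eta_j}\perp\cH_{\eta_k}$ whenever $j\neq k$.

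For completeness, given $F\in L^2(\cO_n\times\gln)$, I would use Fubini to see that the slice $\uomega\mapsto F(\uomega,B)$ lies in $L^2(\cO_n)$ for $\mu_{\gln}$-a.e. $B$, set
\[
c_j(B)=\int_{\cO_n}F(\uomega,B)\,\overline{\eta_j(\uomega)}\,d\uomega ,
\]
and expand each slice against the orthonormal basis $\{\eta_j\}$. Parseval's identity applied slice by slice, followed by integration over $\gln$ and an interchange of sum and integral justified by monotone convergence, yields $\sum_{j\in J}\|c_j\|_{L^2(\gln)}^{\,2}=\|F\|_2^{\,2}<\infty$; in particular each $c_j\in L^2(\gln)$ and $F':=\sum_{j}\eta_j\otimes c_j$ converges in $L^2(\cO_n\times\gln)$ to an element of $\sum^{\oplus}_{j}\cH_{\eta_j}$. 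A short computation of inner products gives $\langle F,F'\rangle=\sum_j\|c_j\|^2=\|F\|_2^{\,2}=\|F'\|_2^{\,2}$, whence $\|F-F'\|_2^{\,2}=0$ and $F=F'$. This places $F$ in $\sum^{\oplus}_{j}\cH_{\eta_j}$ and completes the decomposition.

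I do not anticipate a genuine obstacle, since the statement is at bottom the standard Hilbert space identification $L^2(\cO_n\times\gln)\cong L^2(\cO_n)\otimes L^2(\gln)$, under which $\cH_{\eta_j}$ corresponds to $\C\eta_j\otimes L^2(\gln)$ and the decomposition $L^2(\cO_n)=\sum^{\oplus}_j\C\eta_j$ tensors up to the claim. The only points demanding care are measure-theoretic bookkeeping—the a.e. well-definedness of the slices and the interchange of summation with integration—both handled by Fubini and Parseval as above.
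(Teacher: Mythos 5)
Your proposal is correct and follows the same route the paper intends: the per-$j$ claims are exactly the statements established for a general unit vector $\eta$ in the discussion immediately preceding the proposition, and the direct-sum decomposition is the standard identification $L^2(\cO_n\times\gln)\cong L^2(\cO_n)\otimes L^2(\gln)$, which the paper leaves implicit (it states the proposition without proof). Your Fubini/Parseval argument simply supplies the measure-theoretic details the paper omits.
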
 

Returning to a single $\eta\in L^2(\cO_n )$ with
$\|\eta\|_{L^2(\cO_n)}=1$, 
let us see where $\eta\otimes f$
goes as we map it with the above unitaries
back into $L^2(G_n)$.
First, we have 
$W^{-1}(\eta\otimes f)(\uomega,B)=
\eta(\uomega)f\big(B\gamma(\uomega)\big)$,
for $(\uomega,B)\in\cO_n \times\gln$. Next,
\[
\cF_1^{-1}W^{-1}(\eta\otimes f)(\uy,B)=
\int_{\widehat{\R^n}}
\eta(\uomega)f\big(B\gamma(\uomega)\big)
e^{-2\pi i\uomega\uy}d\uomega.
\]
Finally,
\[
\begin{split}
U^{-1}\cF_1^{-1}W^{-1}(\eta\otimes f)[\uy,B] & =
\cF_1^{-1}W^{-1}(\eta\otimes f)(B^{-1}\uy,B)\\
& =
\int_{\widehat{\R^n}}
\eta(\uomega)f\big(B\gamma(\uomega)\big)
e^{-2\pi i\uomega B^{-1}\uy}d\uomega.
\end{split}
\]
We can define $U_\eta:L^2(\gln)\to L^2(G_n)$ by
\begin{equation}\label{U_eta}
U_\eta f[\uy,B]=\int_{\widehat{\R^n}}
\eta(\uomega)f\big(B\gamma(\uomega)\big)
e^{-2\pi i\uomega B^{-1}\uy}d\uomega,
\end{equation}
for a.e. $[\uy,B]\in G_n, 
f\in L^2(\gln)$.
Thus, we obtain the following result.
\begin{prop}\label{Intertwine}\label{U_eta_iso}
Let $\eta\in L^2(\cO_n )$ satisfy 
$\|\eta\|_{L^2(\cO_n)}=1$.
Then $U_\eta$ is an isometric linear map of
$L^2(\gln)$ into $L^2(G_n)$ that intertwines
$\pi^{\uomega_0}$ with $\lambda_{G_n}$.
\end{prop}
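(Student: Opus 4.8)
The plan is to avoid computing anything directly from \eqref{U_eta} and instead to recognize $U_\eta$ as the composite of the four maps constructed immediately before the statement. Indeed, the three displayed computations preceding \eqref{U_eta} show precisely that $U_\eta = U^{-1}\circ\cF_1^{-1}\circ W^{-1}\circ W_\eta$ as a map of $L^2(\gln)$ into $L^2(G_n)$. Once $U_\eta$ is written this way, both conclusions follow by inspection, so there is essentially no new computation to perform.

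First I would record that $U_\eta$ is an isometry. The maps $U$, $\cF_1$, and $W$ were each already shown to be unitary, hence so are $U^{-1}$, $\cF_1^{-1}$, $W^{-1}$. The remaining factor $W_\eta$ is a linear isometry because $\|\eta\|_{L^2(\cO_n)}=1$: for $f\in L^2(\gln)$, Fubini gives $\|\eta\otimes f\|_2^{\,2}=\|\eta\|_2^{\,2}\,\|f\|_2^{\,2}=\|f\|_2^{\,2}$ (this is also implicit in $W_\eta L^2(\cO_n)=\cH_\eta$). A composite of an isometry with unitaries is an isometry, so $U_\eta$ is an isometric linear map into $L^2(G_n)$. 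Next, the intertwining property follows by chaining three relations already in hand, each written for an arbitrary $[\ux,A]\in G_n$: (i) equation \eqref{lambda_0_equiv_pi_omega} gives $W_\eta\,\pi^{\uomega_0}[\ux,A]=\lambda_{G_n}^0[\ux,A]\,W_\eta$; (ii) the definition $\lambda_{G_n}^0[\ux,A]=W\widetilde{\lambda_{G_n}}[\ux,A]W^{-1}$ rearranges to $W^{-1}\lambda_{G_n}^0[\ux,A]=\widetilde{\lambda_{G_n}}[\ux,A]W^{-1}$; and (iii) the definition $\widetilde{\lambda_{G_n}}[\ux,A]=(\cF_1\circ U)\lambda_{G_n}[\ux,A](\cF_1\circ U)^{-1}$ rearranges, using $(\cF_1\circ U)^{-1}=U^{-1}\cF_1^{-1}$, to $(U^{-1}\cF_1^{-1})\widetilde{\lambda_{G_n}}[\ux,A]=\lambda_{G_n}[\ux,A](U^{-1}\cF_1^{-1})$. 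Applying (i), then (ii), then (iii) to $U_\eta\,\pi^{\uomega_0}[\ux,A]=U^{-1}\cF_1^{-1}W^{-1}W_\eta\,\pi^{\uomega_0}[\ux,A]$ yields $U_\eta\,\pi^{\uomega_0}[\ux,A]=\lambda_{G_n}[\ux,A]\,U_\eta$ for every $[\ux,A]$, which is the asserted intertwining.

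There is no genuine obstacle here; the mathematical content was front-loaded into the construction of $U$, $\cF_1$, $W$, $W_\eta$ and into the verification of \eqref{lambda_0_equiv_pi_omega}. The only point demanding care is bookkeeping: one must confirm that the domains and codomains match so that the fourfold composite is defined, and must track the side on which each representation acts, since passing to an inverse flips an intertwining relation from left to right. As a consistency check one could instead verify the isometry and intertwining straight from \eqref{U_eta} via Plancherel in the $\uy$-variable together with the left invariance and unimodularity of $\mu_{\gln}$, but this would only re-derive the properties already packaged in the preceding unitaries, so I expect the composite argument to be the clean route.
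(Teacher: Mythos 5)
Your proposal is correct and follows essentially the same route as the paper: the paper derives \eqref{U_eta} precisely by tracing $\eta\otimes f=W_\eta f$ back through $W^{-1}$, $\cF_1^{-1}$, and $U^{-1}$, so that Proposition \ref{U_eta_iso} is stated as an immediate consequence of $U_\eta=U^{-1}\circ\cF_1^{-1}\circ W^{-1}\circ W_\eta$ together with the unitarity of the first three maps, the isometry of $W_\eta$, and the intertwining relation \eqref{lambda_0_equiv_pi_omega}. Your explicit chaining of the three intertwining identities simply spells out what the paper leaves implicit.
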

Fix an orthonormal basis $\{\eta_j:j\in J\}$
of $L^2(\cO_n )$. Let 
$\cL_{\eta_j}=U^{-1}\cF_1^{-1}W^{-1}\cH_{\eta_j}$,
for each $j\in J$.
Since $U^{-1}\cF_1^{-1}W^{-1}$ is a unitary map
of $L^2(\cO_n \times\gln)$ onto $L^2(G_n)$, we see
that $L^2(G_n)$ is the direct sum of
the $\cL_{\eta_j}$
Thus, we have a decomposition of the left 
regular representation of $G_n$. Note that 
$L^2(\cO_n )$ is identified with $L^2(\widehat{\R^n})$.
\begin{theorem}\label{decomp_GLn_1} 
Let $\cO_n =\widehat{\R^n}\setminus\{\u0\}$, 
let $\uomega_0=
(1,0,\cdots,0)$, and define the representation
$\pi^{\uomega_0}$ of $G_n$ by
$\pi^{\uomega_0}[\ux,A]f(B)=
e^{2\pi i\uomega_0 B^{-1}\ux}f\left(A^{-1}B\right)$,
for $B\in\gln$, $f\in L^2\big(\gln\big)$, and
$[\ux,A]\in G_n$.
Fix a measurable map
$\gamma:\cO_n \to\gln$ such that 
$\uomega_0\gamma(\uomega)^{-1}=\uomega$, for all
$\uomega\in\cO_n $. 
Let $\{\eta_j:j\in J\}$ be an orthonormal basis
of $L^2(\widehat{\R^n})$.  For each 
$j\in J$, define $U_{\eta_j}:L^2(\gln)\to L^2(G_n)$
by
\[
U_{\eta_j} f[\uy,B]=\int_{\widehat{\R^n}}
\eta_j(\uomega)f\big(B\gamma(\uomega)\big)
e^{-2\pi i\uomega B^{-1}\uy}d\uomega,
\]
for $[\uy,B]\in G_n$, and $f\in L^2(\gln)$. Let
$\cL_{\eta_j}=U_{\eta_j}L^2(\gln)$. 
Then $\cL_{\eta_j}$ is a closed 
$\lambda_{G_n}$-invariant subspace of $L^2(G_n)$
and $U_{\eta_j}$ intertwines $\pi^{\uomega_0}$
with the restriction of $\lambda_{G_n}$ to
$\cL_{\eta_j}$.
Moreover,
$L^2(G_n)=\sum_{j\in J}^{\oplus}\cL_{\eta_j}$.
\end{theorem}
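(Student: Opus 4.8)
The plan is to recognize the entire statement as a transport of structure along the composite unitary $\Phi = U^{-1}\cF_1^{-1}W^{-1}\colon L^2(\cO_n\times\gln)\to L^2(G_n)$, every ingredient of which has already been assembled above. First I would observe that $\Phi$ is unitary, being a composition of the unitaries $U^{-1}$, $\cF_1^{-1}$, and $W^{-1}$, and that it intertwines $\lambda_{G_n}^0$ with $\lambda_{G_n}$. This follows by unwinding the two defining relations $\widetilde{\lambda_{G_n}}[\ux,A] = (\cF_1\circ U)\lambda_{G_n}[\ux,A](\cF_1\circ U)^{-1}$ and $\lambda_{G_n}^0[\ux,A] = W\widetilde{\lambda_{G_n}}[\ux,A]W^{-1}$, which together give $\lambda_{G_n}[\ux,A] = \Phi\,\lambda_{G_n}^0[\ux,A]\,\Phi^{-1}$, that is, $\lambda_{G_n}[\ux,A]\,\Phi = \Phi\,\lambda_{G_n}^0[\ux,A]$ for every $[\ux,A]\in G_n$.

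Next I would record the identity $U_{\eta_j} = \Phi\circ W_{\eta_j}$, where $W_{\eta_j}f = \eta_j\otimes f$. This is precisely the content of the chain of computations culminating in \eqref{U_eta}: applying $W^{-1}$, then $\cF_1^{-1}$, then $U^{-1}$ to $\eta_j\otimes f$ reproduces the integral formula defining $U_{\eta_j}f$. Consequently $\cL_{\eta_j} = U_{\eta_j}L^2(\gln) = \Phi\big(W_{\eta_j}L^2(\gln)\big) = \Phi(\cH_{\eta_j})$. Because $\Phi$ is unitary and $\cH_{\eta_j}$ is closed (by the Proposition exhibiting $L^2(\cO_n\times\gln)=\sum^{\oplus}_{j\in J}\cH_{\eta_j}$), the image $\cL_{\eta_j}$ is closed; because $\cH_{\eta_j}$ is $\lambda_{G_n}^0$-invariant and $\Phi$ intertwines $\lambda_{G_n}^0$ with $\lambda_{G_n}$, the image $\cL_{\eta_j}=\Phi(\cH_{\eta_j})$ is $\lambda_{G_n}$-invariant. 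The asserted intertwining of $\pi^{\uomega_0}$ with the restriction $\lambda_{G_n}\big|_{\cL_{\eta_j}}$ is then immediate: it is exactly Proposition \ref{U_eta_iso} applied to $\eta_j$, with the target narrowed to the invariant range $\cL_{\eta_j}$.

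Finally, the direct sum decomposition transports verbatim. Since that same Proposition gives $L^2(\cO_n\times\gln) = \sum^{\oplus}_{j\in J}\cH_{\eta_j}$ and $\Phi$ is a unitary of $L^2(\cO_n\times\gln)$ onto $L^2(G_n)$, applying $\Phi$ preserves both the mutual orthogonality of the summands and surjectivity onto the whole space, yielding $L^2(G_n) = \sum^{\oplus}_{j\in J}\Phi(\cH_{\eta_j}) = \sum^{\oplus}_{j\in J}\cL_{\eta_j}$. I do not expect any genuine obstacle here: essentially all of the analytic content---the unitarity of $U$, $\cF_1$, and $W$, the explicit pointwise formula for $\lambda_{G_n}^0$, and the isometry property of $U_{\eta_j}$---has already been verified in the development leading up to the statement, so the theorem amounts to assembling these pieces. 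The one point meriting a careful word is simply that a unitary carries an orthogonal direct sum decomposition to an orthogonal direct sum decomposition, which is routine.
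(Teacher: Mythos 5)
Your proposal is correct and follows essentially the same route as the paper: the paper likewise defines $\cL_{\eta_j}=U^{-1}\cF_1^{-1}W^{-1}\cH_{\eta_j}$ and obtains the direct sum decomposition by transporting $L^2(\cO_n\times\gln)=\sum^{\oplus}_{j\in J}\cH_{\eta_j}$ through that composite unitary, with the intertwining supplied by Proposition \ref{U_eta_iso}. The only implicit point, which the paper also handles by a remark, is the identification of $L^2(\cO_n)$ with $L^2(\widehat{\R^n})$ since $\cO_n$ is co-null.
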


When $n=1$, ${\rm GL}_1(\R)$ can be identified
with $\R^*$ and
$G_1$ identified with $\R\rtimes\R^*$. We recall that
$\int_{\R^*}f\,d\mu_{\R^*}=\int_{\R}f(b)
\frac{db}{|b|}$,
where the integral on the right hand side is
the Lebesgue integral on $\R$, and
$\int_{G_1}f\,d\mu_{G_1}=\int_{\R}\int_{\R}
f[y,b]\frac{dy\,db}{b^2}$.
We continue to write $N=\{[y,1]:y\in\R\}$
and $\widehat{N}=\{\chi_\omega:\omega\in\R\}$,
where $\chi_\omega[y,1]=e^{2\pi i\omega y}$,
for $[y,1]\in N$. Select 
$\uomega_0=\omega_0=1$. Then, from \eqref{pi_omega},
$\pi^{\uomega_0}=\pi^1$ acts on $L^2(\R^*)$
via, for $[x,a]\in G_1$, $f\in L^2(\R^*)$, and a.e.
$b\in\R^*$,
\begin{equation}\label{pi_1}
\pi^1[x,a]f(b)=
e^{2\pi ib^{-1}x}f\big(a^{-1}b\big).
\end{equation}
Now, the $\R^*$-orbit $\cO_1=\widehat{\R}\setminus
\{0\}$ is a free orbit. Condition \eqref{gamma_defn}
forces $\gamma:\cO_1
\to\R^*$ to actually be the homeomorphism given by
$\gamma(\omega)=\omega^{-1}$. We can use $\gamma$ to 
move $\pi^1$ to a representation acting on
$L^2(\widehat{\R})$. Note that
$L^2(\widehat{\R})=L^2(\cO_1)$ when $\cO_1$ is
equipped with Lebesgue measure. For $f\in L^2(\R^*)$,
define $Wf$ as a Borel function on $\widehat{\R}$
such that
\[
(Wf)(\omega)=|\omega|^{-1/2}f\big(\gamma(\omega)\big), \text{ for a.e. }\omega\in\cO_1.
\]
Then 
$\int_{\widehat{\R}}|(Wf)(\omega)|^2d\omega=
\int_{\widehat{\R}}|f(\omega^{-1})|^2|\omega|^{-1}\,
d\omega=\int_{\R}|f(u)|^2\frac{du}{|u|}=
\|f\|_{L^2(\R^*)}^{\,\,\,2}$. Thus 
$Wf\in L^2(\widehat{\R})$. It is clear that
$W$ is a unitary map of $L^2(\R^*)$ onto
$L^2(\widehat{\R})$.
Let 
$\pi[x,a]=W\pi^1[x,a]W^{-1}$,
for all $[x,a]\in G_1$. For any $\xi\in L^2(\widehat{\R})$, we have (writing 
$f=W^{-1}\xi$)
\[
\pi[x,a]\xi(\omega)=W\pi^1[x,a]f(\omega)
=|\omega|^{-1/2}\pi^1[x,a]f\big(\omega^{-1}\big)
=|\omega|^{-1/2}e^{2\pi i\omega x}
f\big(a^{-1}\omega^{-1}\big)
\]
\[
=|a|^{1/2}
e^{2\pi i\omega x}\big|\omega a\big|^{-1/2}
f\big((\omega a)^{-1}\big)=
|a|^{1/2}
e^{2\pi i\omega x}\xi(\omega a),\,\,\text{for 
a.e. }\omega\in\widehat{\R}.
\]
Thus, $\pi[x,a]\xi(\omega)=|a|^{1/2}
e^{2\pi i\omega x}\xi(\omega a)$, for a.e.
$\omega\in\widehat{\R},\xi\in L^2(\widehat{\R})$, and $[x,a]\in G_1$. Finally, we use the
inverse Fourier transform to move $\pi$ to
a representation on $L^2(\R)$. Let
$\rho[x,a]=\cF^{-1}\pi[x,a]\cF$, for all
$[x,a]\in G_1$. Then, for $f\in L^2(\R)$,
$\xi=\cF f$, 
and a.e. $t\in\R$, 
\[
\begin{split}
\rho[x,a]f(t) & = \cF^{-1}\pi[x,a]\xi(t)=
\int_{\widehat{\R}}\pi[x,a]\xi(\omega)
e^{-2\pi i\omega t}d\omega\\
& =\int_{\widehat{\R}}|a|^{1/2}
e^{2\pi i\omega x}\xi(\omega a)
e^{-2\pi i\omega t}d\omega
=\int_{\widehat{\R}}|a|^{1/2}
\xi(\omega a)
e^{-2\pi i\omega (t-x)}d\omega\\
& =\int_{\widehat{\R}}|a|^{-1/2}
\xi(\nu) 
e^{-2\pi i\nu a^{-1}(t-x)}d\nu
=|a|^{-1/2}f\big(a^{-1}(t-x)\big).
\end{split}
\]
Thus, $\rho$ is just the natural representation of
$G_1$ on $L^2(\R)$, compare with \eqref{natural},
which is square-integrable by Proposition 
\ref{open_orbit_sq_int}.
We now have a corollary of Theorem \ref{decomp_GLn_1}.
\begin{cor}\label{mult_G_1}
The left regular representation $\lambda_{G_1}$ is
equivalent to $\aleph_0\cdot\rho$, where $\rho$ is
the natural representation of $G_1$ on $L^2(\R)$
and $\rho$ is square-integrable.
\end{cor}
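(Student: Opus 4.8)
The plan is to read the corollary directly off Theorem \ref{decomp_GLn_1} specialized to $n=1$, combined with the explicit equivalence $\pi^1\sim\rho$ worked out in the computation immediately preceding the statement. First I would specialize Theorem \ref{decomp_GLn_1} to $n=1$: fixing any orthonormal basis $\{\eta_j:j\in J\}$ of $L^2(\widehat{\R})$, we obtain $L^2(G_1)=\sum^\oplus_{j\in J}\cL_{\eta_j}$, where each $\cL_{\eta_j}$ is a closed $\lambda_{G_1}$-invariant subspace and the intertwiner $U_{\eta_j}$ exhibits $\pi^{\uomega_0}=\pi^1$ as equivalent to the restriction of $\lambda_{G_1}$ to $\cL_{\eta_j}$. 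Hence $\lambda_{G_1}$ is equivalent to $|J|\cdot\pi^1$.

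Next I would pin down the cardinality of $J$. Since $G_1$ is second countable, $L^2(\widehat{\R})$ is a separable, infinite-dimensional Hilbert space, so any orthonormal basis is countably infinite; thus $|J|=\aleph_0$ and $\lambda_{G_1}\sim\aleph_0\cdot\pi^1$. I would then chain in the equivalence $\pi^1\sim\rho$. The computation just above constructs the unitary $\cF^{-1}W:L^2(\R^*)\to L^2(\R)$ with $\rho[x,a]=(\cF^{-1}W)\,\pi^1[x,a]\,(\cF^{-1}W)^{-1}$ for all $[x,a]\in G_1$, so $\pi^1\sim\rho$. Because unitary equivalence respects multiplicities, $\aleph_0\cdot\pi^1\sim\aleph_0\cdot\rho$, yielding $\lambda_{G_1}\sim\aleph_0\cdot\rho$.

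Finally, for the square-integrability assertion I would invoke Proposition \ref{open_orbit_sq_int} with $K=\R^*$ acting on $\widehat{\R}$. The orbit $\cO_1=\widehat{\R}\setminus\{0\}$ is open with trivial stabilizer $K_\omega=\{{\rm id}\}$, which is compact, so the subrepresentation $\rho_{\cO_1}$ on $\cH^2_{\cO_1}$ is square-integrable; since $\cO_1$ is co-null in $\widehat{\R}$, the space $\cH^2_{\cO_1}$ equals all of $L^2(\R)$ and $\rho_{\cO_1}=\rho$, so $\rho$ itself is square-integrable. The main point requiring care is exactly this last identification together with the cardinality count for $J$; the genuinely analytic content—the decomposition of $L^2(G_1)$ and the explicit realization of $\pi^1$ as $\rho$—has already been established, so the corollary is essentially a matter of assembling those pieces and no real obstacle remains.
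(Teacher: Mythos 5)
Your proposal is correct and follows essentially the same route as the paper: Corollary \ref{mult_G_1} is obtained there by specializing Theorem \ref{decomp_GLn_1} to $n=1$ (with the countable orthonormal basis of $L^2(\widehat{\R})$ supplying the multiplicity $\aleph_0$), transporting $\pi^1$ to the natural representation $\rho$ via the unitaries $W$ and $\cF^{-1}$ constructed just before the statement, and citing Proposition \ref{open_orbit_sq_int} for square-integrability. Your extra care in noting that $\cO_1$ is co-null so that $\cH^2_{\cO_1}=L^2(\R)$ and $\rho_{\cO_1}=\rho$ is a point the paper leaves implicit, but it is the same argument.
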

Since $\pi^1$ is equivalent to $\rho$, we have
that $\pi^1$ is square-integrable. For later
use, we need to identify which $g\in L^2(\R^*)$ is
such that $V_g$, defined via the representation
$\pi^1$, is an isometry of $L^2(\R^*)$ into
$L^2(G_2)$.
\begin{prop}\label{C_pi_1}
The Duflo-Moore operator for $\pi^1$
is given by $C_{\pi^1}g(t)=|t|^{1/2}g(t)$, a.e. 
$t\in\R^*$, for $g\in\cD_{\pi^1}$, where
$\cD_{\pi^1}=\left\{h\in L^2(\R^*):
\int_{\R^*}\big||t|^{1/2}h(t)\big|^2d\mu_{\R^*}(t)
<\infty\right\}$.
\end{prop}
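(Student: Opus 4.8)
The plan is to identify $C_{\pi^1}$ through the uniqueness clause of Theorem \ref{Du_Mo}: any densely defined, nonzero, positive, selfadjoint operator $T$ on $\cH_{\pi^1}=L^2(\R^*)$ that satisfies the covariance relation $\pi^1(x)T\pi^1(x)^*=\Delta_{G_1}(x)^{1/2}T$ must have domain $\cD_{\pi^1}$ and equal a positive scalar multiple of $C_{\pi^1}$. First I would introduce the candidate operator $C=M_{|t|^{1/2}}$, multiplication by $|t|^{1/2}$, on its natural domain $\{h\in L^2(\R^*):|t|^{1/2}h\in L^2(\R^*)\}$. Since $\big||t|^{1/2}h(t)\big|^2=|t|\,|h(t)|^2$ and $d\mu_{\R^*}(t)=|t|^{-1}\,dt$, this domain is exactly $\cD_{\pi^1}$, and $C$ is a densely defined, nonzero, positive, selfadjoint multiplication operator by the real measurable weight $|t|^{1/2}$ (see Example 2.7.2 of \cite{KR}).

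Second, I would verify the covariance relation directly. Recall that $\Delta_{G_1}[x,a]=|a|^{-1}$ and that $\pi^1[x,a]$ is unitary with $\pi^1[x,a]^*=\pi^1[-a^{-1}x,a^{-1}]$. Computing $\pi^1[x,a]\,C\,\pi^1[x,a]^* g$ pointwise and tracking the three factors — the two exponentials cancel, while the factor $|a^{-1}b|^{1/2}=|a|^{-1/2}|b|^{1/2}$ produces the scalar — gives
\[
\pi^1[x,a]\,C\,\pi^1[x,a]^*g(b)=|a|^{-1/2}|b|^{1/2}g(b)=\Delta_{G_1}[x,a]^{1/2}\,Cg(b).
\]
By the uniqueness clause of Theorem \ref{Du_Mo}, this already yields $C=rC_{\pi^1}$ for some $r>0$ and simultaneously confirms that the domain of $C_{\pi^1}$ is $\cD_{\pi^1}$.

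Third, and this is the only genuinely computational step, I would pin down the constant $r$ using the orthogonality relation (d) of Theorem \ref{Du_Mo} in the form $\|V_\eta\xi\|_{L^2(G_1)}^2=\|\xi\|_{L^2(\R^*)}^2\,\|C_{\pi^1}\eta\|_{L^2(\R^*)}^2$, valid for $\eta\in\cD_{\pi^1}$ and $\xi\in L^2(\R^*)$. Writing $V_\eta\xi[x,a]=\int_{\R^*}\xi(b)\,e^{-2\pi i b^{-1}x}\,\overline{\eta(a^{-1}b)}\,|b|^{-1}\,db$, I would recognize the $x$-dependence as a Fourier transform after the substitution $s=b^{-1}$, apply the Plancherel theorem in $x$, and change variables back to obtain $\int_\R|V_\eta\xi[x,a]|^2\,dx=\int_{\R^*}|\xi(b)|^2|\eta(a^{-1}b)|^2\,db$. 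Integrating against $da/a^2$, then using Fubini together with the substitution $c=a^{-1}b$ — whose Jacobian converts $da/a^2$ into $|b|^{-1}\,dc$ — separates the variables and gives
\[
\|V_\eta\xi\|_{L^2(G_1)}^2=\|\xi\|^2_{L^2(\R^*)}\int_{\R^*}|\eta(c)|^2\,dc.
\]
Since $\int_{\R^*}|\eta(c)|^2\,dc=\int_{\R^*}\big||c|^{1/2}\eta(c)\big|^2\,d\mu_{\R^*}(c)=\|C\eta\|^2_{L^2(\R^*)}$, comparing with relation (d) and using $C=rC_{\pi^1}$ forces $r=1$, so $C_{\pi^1}=C$, i.e. $C_{\pi^1}g(t)=|t|^{1/2}g(t)$.

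The main obstacle is this last step: the constant $r$ is sensitive to the fixed normalizations of left Haar measure on $G_1$ and of the multiplicative Haar measure $d\mu_{\R^*}=|t|^{-1}\,dt$, so the bookkeeping through the inversion $b\mapsto b^{-1}$ and the two changes of variables must be carried out carefully to land on $r=1$ rather than a spurious constant. Everything else is routine once the covariance relation in the second step is in hand.
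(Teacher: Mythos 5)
Your proposal is correct, and its computational heart --- the identity $\|V_\eta\xi\|_{L^2(G_1)}^2=\|\xi\|_{L^2(\R^*)}^2\int_{\R^*}\big||t|^{1/2}\eta(t)\big|^2\,d\mu_{\R^*}(t)$ --- is exactly the displayed equation on which the paper's proof rests. Where you differ is in how the operator itself is extracted: the paper derives the norm identity and then simply ``appeals to Theorem \ref{Du_Mo},'' which implicitly requires passing from equality of the quadratic forms $\|C_{\pi^1}\eta\|=\|M_{|t|^{1/2}}\eta\|$ to equality of the positive selfadjoint operators (polarization plus uniqueness of positive square roots), and reads off the domain from the definition of $\cD_{\pi^1}$ via $V_\eta\eta\in L^2$. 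You instead first verify the semi-invariance relation $\pi^1[x,a]\,M_{|t|^{1/2}}\,\pi^1[x,a]^*=|a|^{-1/2}M_{|t|^{1/2}}$ and invoke the uniqueness clause of the Duflo--Moore theorem, which hands you both the domain and $M_{|t|^{1/2}}=rC_{\pi^1}$ at once, reserving the integral computation solely to fix $r=1$. Your route is slightly longer but makes explicit the step the paper leaves to the reader, and the covariance check itself is a one-line pointwise calculation (the exponentials cancel because $(a^{-1}b)^{-1}a^{-1}=b^{-1}$), so there is no gap; your bookkeeping of the measures $d\mu_{\R^*}=|t|^{-1}dt$ and $d\mu_{G_1}=dy\,db/b^2$ through the substitutions $s=b^{-1}$ and $c=a^{-1}b$ does land on $r=1$ as claimed.
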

\begin{proof}
Calculations modeled on the standard proof, such as
used in Example 2.28 of \cite{Fuh}, show that, 
for any $f,g\in L^2(\R^*)$, 
\begin{equation}\label{V_fg}
\int_{G_1}|V_gf[x,a]|^2
d\mu_{G_1}\big([x,a]\big)=
\|f\|_2^{\,\,2}
\int_{\R^*}\big||t|^{1/2}
g(t)\big|^2d\mu_{\R^*}(t).
\end{equation}
If $f\in L^2(\R^*)$ is nonzero, then the
right hand side of \eqref{V_fg} is finite if and
only if 
\[
\int_{\R^*}\big||t|^{1/2}
g(t)\big|^2d\mu_{\R^*}(t)<\infty.
\]
An appeal to Theorem \ref{Du_Mo} completes the
proof.
\end{proof}
\begin{cor}\label{pi_1_V_g_isometry}
If $g\in L^2(\R^*)$ satisfies 
$\int_{\R^*}\big||t|^{1/2}
g(t)\big|^2d\mu_{\R^*}(t)=1$, then $V_g$ is
an isometry of $L^2(\R^*)$ into $L^2(G_1)$ that
intertwines $\pi^1$ with $\lambda_{G_1}$.
\end{cor}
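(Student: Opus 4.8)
The plan is to read this off directly from the Duflo--Moore theory together with the explicit Duflo--Moore operator computed in Proposition \ref{C_pi_1}. Since $\pi^1$ is equivalent to the natural representation $\rho$ of $G_1$ on $L^2(\R)$, which is square--integrable by Proposition \ref{open_orbit_sq_int}, the representation $\pi^1$ is itself square--integrable, so all of the machinery recorded after Theorem \ref{Du_Mo} applies with $\pi=\pi^1$, $G=G_1$, and $\cH_{\pi^1}=L^2(\R^*)$.

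First I would observe that the hypothesis is precisely the statement that $g$ lies in the domain $\cD_{\pi^1}$ and that its image under the Duflo--Moore operator has norm one. Indeed, by Proposition \ref{C_pi_1}, we have $\cD_{\pi^1}=\{h\in L^2(\R^*):\int_{\R^*}\big||t|^{1/2}h(t)\big|^2\,d\mu_{\R^*}(t)<\infty\}$ and $C_{\pi^1}h(t)=|t|^{1/2}h(t)$. The assumption $\int_{\R^*}\big||t|^{1/2}g(t)\big|^2\,d\mu_{\R^*}(t)=1$ is in particular finite, so $g\in\cD_{\pi^1}$, and the same integral equals $\|C_{\pi^1}g\|_{L^2(\R^*)}^{\,\,2}$, whence $\|C_{\pi^1}g\|_{L^2(\R^*)}=1$.

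Next I would invoke the consequences of Theorem \ref{Du_Mo} that were recorded in the discussion immediately following its statement. For $\eta\in\cD_\pi$, parts (b) and (d) give $\|V_\eta\xi\|_{L^2(G)}=\|C_\pi\eta\|_{\cH_\pi}\|\xi\|_{\cH_\pi}$ for every $\xi\in\cH_\pi$; applying this with $\eta=g$ and using $\|C_{\pi^1}g\|_{L^2(\R^*)}=1$ yields $\|V_g\xi\|_{L^2(G_1)}=\|\xi\|_{L^2(\R^*)}$ for all $\xi$, so that $V_g$ is an isometry of $L^2(\R^*)$ into $L^2(G_1)$. The intertwining is then exactly the computation \eqref{intertwine_V_eta_lambda}, which shows $\lambda_{G_1}(x)V_g\xi=V_g\pi^1(x)\xi$ for every $x\in G_1$; thus $V_g$ intertwines $\pi^1$ with $\lambda_{G_1}$.

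Since every step is a direct substitution into results already established, there is no genuine obstacle here; the only point worth checking carefully is the bookkeeping identification of the hypothesis integral with $\|C_{\pi^1}g\|_{L^2(\R^*)}^{\,\,2}$, which is immediate from the explicit formula for $C_{\pi^1}$ in Proposition \ref{C_pi_1}.
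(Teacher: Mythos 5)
Your proof is correct and follows essentially the route the paper intends: the hypothesis says exactly that $g\in\cD_{\pi^1}$ with $\|C_{\pi^1}g\|_{L^2(\R^*)}=1$ by Proposition \ref{C_pi_1} (equivalently, one can read the isometry directly off \eqref{V_fg}), the isometry then follows from the discussion after Theorem \ref{Du_Mo}, and the intertwining is \eqref{intertwine_V_eta_lambda}. No gaps.
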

Now, we can use basic facts of the theory of induced representations to
show that $\lambda_{G_n}$ is a 
countably infinite multiple of a single
square-integrable representation, for any $n\in \N$.
The proof of the following theorem follows the
ideas in \cite{BT}.
\begin{theorem}\label{mult_G_n}
Let $n\in\N$, there exists a square-integrable 
representation $\sigma_n$ of $G_n$ such that
$\lambda_{G_n}$ is equivalent to 
$\aleph_0\cdot\sigma_n$.
\end{theorem}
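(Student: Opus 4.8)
The plan is to argue by induction on $n$. The base case $n=1$ is exactly Corollary \ref{mult_G_1}: there $\sigma_1=\rho\sim\pi^1$ is square-integrable and $\lambda_{G_1}\sim\aleph_0\cdot\sigma_1$. For the inductive step, Theorem \ref{decomp_GLn_1} already supplies $\lambda_{G_n}\sim\aleph_0\cdot\pi^{\uomega_0}$, where $\pi^{\uomega_0}\sim{\rm ind}_N^{G_n}\chi_{\uomega_0}$, so it is enough to produce a square-integrable $\sigma_n$ with $\pi^{\uomega_0}\sim\aleph_0\cdot\sigma_n$; the cardinal identity $\aleph_0\cdot\aleph_0=\aleph_0$ then finishes the argument.

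First I would locate the stabilizer of $\chi_{\uomega_0}$ for the $G_n$-action on $\widehat{N}$. Since $[\ux,A]\cdot\chi_{\uomega_0}=\chi_{\uomega_0A^{-1}}$, this stabilizer is $H=N\rtimes L$, where $L=\{A\in\gln:\uomega_0A=\uomega_0\}$ is the set of matrices with first row $\uomega_0=(1,0,\dots,0)$. Writing such a matrix in block form shows that $\begin{pmatrix}1&\u0\\ \mathbf{a}&B\end{pmatrix}\mapsto[\mathbf{a},B]$ is a topological group isomorphism of $L$ onto $G_{n-1}$, with $\mathbf{a}\in\R^{n-1}$ and $B\in{\rm GL}_{n-1}(\R)$. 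Because every $A\in L$ fixes $\uomega_0$, the character $\chi_{\uomega_0}$ extends to a character $\bar\chi_{\uomega_0}$ of $H$ via $\bar\chi_{\uomega_0}[\ux,A]=e^{2\pi i\uomega_0\ux}$.

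Next I would apply the Induction in Stages Theorem (\cite{Fol}) to write $\pi^{\uomega_0}\sim{\rm ind}_H^{G_n}\big({\rm ind}_N^{H}\chi_{\uomega_0}\big)$. The inner representation is evaluated with Corollary \ref{KH_ind_2} applied to $H=\R^n\rtimes L$: for $B\in L$ the factor $e^{2\pi i\uomega_0B^{-1}\ux}$ collapses to $e^{2\pi i\uomega_0\ux}$, since $B$ fixes $\uomega_0$, giving ${\rm ind}_N^{H}\chi_{\uomega_0}\sim\bar\chi_{\uomega_0}\otimes\tilde\lambda_L$, where $\tilde\lambda_L$ is the left regular representation of $L\cong G_{n-1}$ lifted to $H$ through $H/N\cong L$. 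The inductive hypothesis $\lambda_L\sim\aleph_0\cdot\sigma_{n-1}$, together with the fact that tensoring with the one-dimensional $\bar\chi_{\uomega_0}$ and lifting both commute with direct sums, yields $\bar\chi_{\uomega_0}\otimes\tilde\lambda_L\sim\aleph_0\cdot(\bar\chi_{\uomega_0}\otimes\tilde\sigma_{n-1})$, with $\tilde\sigma_{n-1}$ the lift of $\sigma_{n-1}$. Since induction commutes with direct sums, setting $\sigma_n:={\rm ind}_H^{G_n}(\bar\chi_{\uomega_0}\otimes\tilde\sigma_{n-1})$ gives $\pi^{\uomega_0}\sim\aleph_0\cdot\sigma_n$, and hence $\lambda_{G_n}\sim\aleph_0\cdot\sigma_n$.

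It remains to verify that $\sigma_n$ is square-integrable. Irreducibility is the Mackey-machine output: $\gln$ acts on $\widehat{\R^n}$ with the single nontrivial orbit $\cO_n$, which is open and hence locally closed, so $H$ is the stabilizer of the point $\chi_{\uomega_0}$ and the quotient map $G_n/H\to\cO_n$ is a homeomorphism (as noted for $n=2$ in the introduction); because $\bar\chi_{\uomega_0}\otimes\tilde\sigma_{n-1}$ is irreducible (a character times the irreducible $\sigma_{n-1}$) and restricts to a multiple of $\chi_{\uomega_0}$ on $N$, its induction $\sigma_n$ is irreducible. Square-integrability is then automatic: the equivalence $\lambda_{G_n}\sim\aleph_0\cdot\sigma_n$ exhibits the irreducible $\sigma_n$ as a subrepresentation of $\lambda_{G_n}$, and an irreducible subrepresentation of the left regular representation is square-integrable --- the fact underlying the decomposition $L^2(G)=\oplus_{\pi\in\widehat{G}^r}L^2_\pi(G)$ recalled in Section 2. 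The step I expect to require the most care is this Mackey-theoretic input: confirming that $G_n=\R^n\rtimes\gln$ is regular enough for induction from the orbit stabilizer of an irreducible to yield an irreducible representation, rather than merely one having $\sigma_n$ as a constituent.
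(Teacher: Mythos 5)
Your proposal is correct and follows essentially the same route as the paper's own proof: induction anchored at Corollary \ref{mult_G_1}, the reduction $\lambda_{G_n}\sim\aleph_0\cdot\pi^{\uomega_0}$ from Theorem \ref{decomp_GLn_1}, induction in stages through the stabilizer $\R^n\rtimes L$ with $L\cong G_{n-1}$, evaluation of the inner induced representation as $\bar\chi_{\uomega_0}\otimes\tilde\lambda_L$ via Corollary \ref{KH_ind_2}, commuting induction with direct sums, and irreducibility plus square-integrability from the Mackey machine and the fact that an irreducible subrepresentation of $\lambda_{G_n}$ is square-integrable. The only difference is cosmetic indexing (you step from $G_{n-1}$ to $G_n$ where the paper steps from $G_n$ to $G_{n+1}$), and the regularity worry you flag at the end is settled exactly as you suggest: the single nontrivial orbit is open, so the action is smooth and Theorem 4.24 of \cite{KT} (or Theorem 6.42 of \cite{Fol}) gives irreducibility of the induced representation.
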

\begin{proof}
Use mathematical induction. The case of $n=1$ is
simply Corollary \ref{mult_G_1}. Suppose that
$n\in\N$ and there exists a square-integrable
representation $\sigma_n$ of $G_n$ such that
$\lambda_{G_n}$ is equivalent to 
$\aleph_0\cdot\sigma_n$. Now consider the action
of ${\rm GL}_{n+1}(\R)$ on $\widehat{\R^{n+1}}$.
Let $\uomega_0=(1,0,\cdots,0)\in\widehat{\R^{n+1}}$.
The stability subgroup of $\uomega_0$ is
\[
H_{\uomega_0}=\left\{\begin{pmatrix}
1 & \u0\\
\ux & A
\end{pmatrix}:\ux\in\R^n, A\in\gln\right\},
\]
where $\u0$ here denotes a row of $n$ zeros. Note
that $[\ux,A]\to\begin{pmatrix}
1 & \u0\\
\ux & A
\end{pmatrix}$ is a topological group isomorphism
of $G_n$ with $H_{\uomega_0}$. Let 
$\sigma_n'\begin{pmatrix}
1 & \u0\\
\ux & A
\end{pmatrix}=\sigma_n[\ux,A]$, for each
$\begin{pmatrix}
1 & \u0\\
\ux & A
\end{pmatrix}\in H_{\uomega_0}$. Then 
$\sigma_n'$ is a square-integrable representation
of $H_{\uomega_0}$ and $\lambda_{H_{\uomega_0}}$
is equivalent to $\aleph_0\cdot\sigma_n'$.
Use $N$ to denote
$\{[\uz,{\rm id}]:\uz\in\R^{n+1}\}$, where 
${\rm id}$ is the identity in ${\rm GL}_{n+1}(\R)$.

By Theorem \ref{decomp_GLn_1}, $\lambda_{G_{n+1}}$ is
equivalent to $\aleph_0\cdot\pi^{\uomega_0}$. 
Moreover, $\pi^{\uomega_0}$ is equivalent to
${\rm ind}_N^{G_{n+1}}\chi_{\uomega_0}$. Let
$H=\R^{n+1}\rtimes H_{\uomega_0}=\{[\uz,B]:\uz\in\R^{n+1},B\in H_{\uomega_0}\}$. By the induction in
stages theorem (Theorem 2.47 in \cite{KT})
${\rm ind}_N^{G_{n+1}}\chi_{\uomega_0}$ is 
equivalent to 
${\rm ind}_H^{G_{n+1}}\big({\rm ind}_N^{H}\chi_{\uomega_0}\big)$. Since $\uomega_0 B^{-1}
=\uomega_0$, for any $B\in H_{\uomega_0}$, 
Corollary \ref{KH_ind_2} implies 
${\rm ind}_N^{H}\chi_{\uomega_0}$ is equivalent to
the representation $\sigma^{\uomega_0}$ acting on
$L^2(H_{\uomega_0})$ by, for $[\uz,B]\in H$,
 $\sigma^{\uomega_0}
 [\uz,B]f(C)=
 e^{2\pi i\uomega_0\uz}
 f(B^{-1}C)$, for $C\in H_{\uomega_0}$ and 
 $f\in L^2(H_{\uomega_0})$. That is, 
 $\sigma^{\uomega_0}=\chi_{\uomega_0}\otimes
 (\lambda_{H_{\uomega_0}}\circ q)$, where 
 $q:H\to H_{\uomega_0}$ is the homomorphism given
 by $q[\uz,B]=B$, for all $[\uz,B]\in H$ (see
 Remark 1.45 of \cite{KT}). By the inductive
 hypothesis, $\sigma^{\uomega_0}$, and hence also
 ${\rm ind}_N^{H}\chi_{\uomega_0}$, is equivalent
 to $\chi_{\uomega_0}\otimes\big(\aleph_0\cdot
 (\sigma_n'\circ q)\big)=\aleph_0\cdot\big(
 \chi_{\uomega_o}\otimes(\sigma_n'\circ q)\big)$. The
 process of inducing commutes with taking direct
sums (see Proposition 2.42 of \cite{KT}, for example). Thus,
\[
\pi^{\uomega_0}\sim{\rm ind}_N^{G_{n+1}}\chi_{\uomega_0}\sim
{\rm ind}_H^{G_{n+1}}\big({\rm ind}_N^{H}\chi_{\uomega_0}\big)\sim
\aleph_0\cdot{\rm ind}_H^{G_{n+1}}\big(
 \chi_{\uomega_o}\otimes(\sigma_n'\circ q)\big).
\]
Let $\sigma_{n+1}= {\rm ind}_H^{G_{n+1}}\big(
 \chi_{\uomega_o}\otimes(\sigma_n'\circ q)\big)$.
 By Theorem 4.24 of \cite{KT} or Theorem 6.42 in
 \cite{Fol}, $\sigma_{n+1}$ is an irreducible
 representation of $G_{n+1}$. We have
 $\pi^{\uomega_0}$ is equivalent to $\aleph_0\cdot 
 \sigma_{n+1}$ and $\lambda_{G_{n+1}}$ is 
 equivalent to $\aleph_0\cdot\pi^{\uomega_0}$.
 Therefore, $\lambda_{G_{n+1}}$ is 
 equivalent to $\aleph_0\cdot\sigma_{n+1}$. 
 Finally, since $\sigma_{n+1}$ is irreducible and
 equivalent to a subrepresentation of 
 $\lambda_{G_{n+1}}$, we have that $\sigma_{n+1}$
 is a square-integrable representation of $G_{n+1}$
 (see Theorem 2.25 of \cite{Fuh}, for example).
\end{proof} 

\begin{rem}
Theorem \ref{mult_G_n} says that $G_n$ is an 
{\rm [AR]}-group and $\widehat{G_n^r}$ is a singleton. We have detailed knowledge in the case of 
$n=1$ and $\sigma_1$ could be taken to be either
$\pi^1$ as in \eqref{pi_1} or the natural
representation $\rho$ acting on $L^2(\R)$, or any
other equivalent realization. When $n=2$, we
will reserve the notation $\sigma_2$ for one 
concrete realization of the unique member of
$\widehat{G_2^r}$.
\end{rem}

\section{Factoring $\gl2$ and $G_2$}
For the rest of this paper, the focus is on $G_2$
and obtaining an explicit description of the
square-integrable representation whose existence
is guaranteed by Theorem \ref{mult_G_n}. The first
step is to establish decompositions of $\gl2$
and $G_2$ so that Proposition \ref{KH_ind} can
be applied. Also, in this section, we introduce
a particular choice of the map 
$\gamma$ from the open orbit into $\gl2$
that is a cross-section of the group action.
Properties of this $\gamma$ turn out to be effective 
in simplifying computations.

As usual $N$ denotes the abelian normal subgroup
$\{[\uy,{\rm id}]:\uy
\in\R^2\}$. 
The nontrivial orbit in $\widehat{\R^2}$ will be
denoted $\cO$, rather than $\cO_2$. So
$\cO=\widehat{\R^2}\setminus\{\u0\}$.
 The point $\uomega_0=(1,0)$
will serve as a representative point in $\cO$. The
stability subgroup inside $\gl2$ 
for this point is now denoted $H_{(1,0)}$. So
\[
H_{(1,0)}=\big\{A\in\gl2:(1,0)A=(1,0)\big\}=
\left\{\begin{pmatrix}
1 & 0\\
u & v
\end{pmatrix}:u,v\in\R, v\neq 0
\right\}.
\]
Let $K_0=\left\{\begin{pmatrix}
s & -t\\
t & s
\end{pmatrix}:s,t\in\R,s^2+t^2>0\right\}$. Then
$K_0$ is a closed subgroup of $\gl2$ such that
$K_0\cap H_{(1,0)}=\{{\rm id}\}$. The fact that
$\gl2=K_0H_{(1,0)}$ is established in the following
proposition which is proven by direct computation.

\begin{prop}\label{factor_gl2}
If $A=\begin{pmatrix}
a & b\\
c & d
\end{pmatrix}\in\gl2$, then $A$ can be uniquely
decomposed as the product
$A=M_AC_A$ with $M_A\in K_0$ and
$C_A\in H_{(1,0)}$. In fact 
\[
M_A=\begin{pmatrix}
s & -t\\
t & s
\end{pmatrix},
\text{ with } s=\frac{d(ad-bc)}{b^{2}+d^{2}},
 t=\frac{-b(ad-bc)}{b^{2}+d^{2}},
\]
and
\[
C_A=\begin{pmatrix}
1 & 0\\
u & v
\end{pmatrix},
\text{ with } u=\frac{cd+ab}{(ad-bc)}, 
v=\frac{b^{2}+d^{2}}{(ad-bc)}.
\]
\end{prop}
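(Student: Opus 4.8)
The plan is to treat the claimed identity $A = M_A C_A$ as a system of equations in the four unknown entries $s, t, u, v$ and solve it directly; the only genuine subtlety is checking that the resulting solution lands in the correct subgroups. First I would multiply out the product
\[
\begin{pmatrix} s & -t \\ t & s \end{pmatrix}\begin{pmatrix} 1 & 0 \\ u & v \end{pmatrix} = \begin{pmatrix} s - tu & -tv \\ t + su & sv \end{pmatrix}
\]
and equate entries with $A$, obtaining the four scalar equations $a = s - tu$, $b = -tv$, $c = t + su$, and $d = sv$.

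Next I would solve this system. From $b = -tv$ and $d = sv$ one gets $b^{2} + d^{2} = (s^{2}+t^{2})v^{2}$, while taking determinants of $A = M_A C_A$ gives $ad - bc = \det(M_A)\det(C_A) = (s^{2}+t^{2})v$. Dividing these yields $v = (b^{2}+d^{2})/(ad-bc)$, and then $s = d/v$ and $t = -b/v$ reproduce exactly the stated expressions for $M_A$. For the remaining unknown, I would eliminate $s$ and $t$ by forming the combination $sc - ta = (s^{2}+t^{2})u$, which gives $u = (sc - ta)/(s^{2}+t^{2}) = (ab + cd)/(ad-bc)$, matching the formula for $C_A$.

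The step that actually requires attention is well-definedness: all of the above denominators must be nonzero. Since $A$ is invertible, its columns are nonzero, so in particular $b^{2} + d^{2} > 0$ (otherwise the second column would vanish), and $ad - bc \neq 0$. This makes every expression above meaningful. Moreover $s^{2} + t^{2} = (ad-bc)^{2}/(b^{2}+d^{2}) > 0$ confirms $M_A \in K_0$, and $v = (b^{2}+d^{2})/(ad-bc) \neq 0$ confirms $C_A \in H_{(1,0)}$. A direct substitution of these values back into the product then verifies $M_A C_A = A$, establishing existence.

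Finally, uniqueness comes for free from the intersection fact already recorded above: if $A = M_1 C_1 = M_2 C_2$ with $M_i \in K_0$ and $C_i \in H_{(1,0)}$, then $M_2^{-1} M_1 = C_2 C_1^{-1} \in K_0 \cap H_{(1,0)} = \{{\rm id}\}$, forcing $M_1 = M_2$ and $C_1 = C_2$. I do not expect any real obstacle here; the entire argument is elementary linear algebra, and the only place to be careful is confirming $b^{2}+d^{2} \neq 0$ so that the displayed rational expressions are valid.
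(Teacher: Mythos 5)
Your proof is correct and follows exactly the route the paper intends: the paper gives no written proof, stating only that the proposition "is proven by direct computation," and your direct computation (solving the four entry equations, checking $b^2+d^2>0$ and $ad-bc\neq 0$, and deducing uniqueness from $K_0\cap H_{(1,0)}=\{{\rm id}\}$) supplies precisely that. The algebra checks out and nothing is missing.
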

This factorization leads to a parallel factorization
of $G_2$. Let 
\[
H=\R^2\rtimes H_{(1,0)}=\{[\ux,C]:C\in H_{(1,0)}\}
\text{ and }
K=\{[\u0,M]:M\in K_0\}.
\]
Then $K\cap H=\{[\u0,{\rm id}]\}$ and 
$G_2=KH$. For any $[\ux,A]\in G_2$,
\begin{equation}\label{factor_G_2}
[\ux,A]=[\u0,M_A][M_A^{-1}\ux,C_A],
\end{equation}
where $M_A$ and $C_A$ are as defined in 
Proposition \ref{factor_gl2}. Note that the 
map $\big([\u0,M],[\ux,C]\big)\to[M\ux,MC]$ is
a homeomorphism of $K\times H$ with $G_2$, so
the conditions necessary for Proposition 
\ref{KH_ind} are satisfied when we wish to
induce a representation of $H$ to $G_2$.

 For each $\uomega=
(\omega_1,\omega_2)\in\cO$, there
is a unique element $\gamma(\uomega)\in K_0$ such
that $\gamma(\uomega)\cdot(1,0)=\uomega$. Since
$\uomega=\gamma(\uomega)\cdot(1,0)=
(1,0)\gamma(\uomega)^{-1}$, the top row of
$\gamma(\uomega)^{-1}$ must be 
$(\omega_1 \quad\omega_2)$. Thus 
\[
\gamma(\uomega)^{-1}=\begin{pmatrix}
\omega_1 & \omega_2\\
-\omega_2 & \omega_1
\end{pmatrix}\quad\text{and}\quad
\gamma(\uomega)=\frac{1}{\|\uomega\|^2}
\begin{pmatrix}
\omega_1 & -\omega_2\\ 
\omega_2 & \omega_1
\end{pmatrix}.
\]
We will frequently use that 
$(1,0)\gamma(\uomega)^{-1}
=\uomega$ and $\uomega\gamma(\uomega)=(1,0)$, for
any $\uomega\in\cO$. We also will need the
observation that Haar integration on $K$ is
given by, for $f\in C_c(K)$,
\begin{equation}\label{Haar_K}
\int_K f\,d\mu_K=\int_{\widehat{\R^2}}
f[\u0,\gamma(\uomega)]\,
\frac{d\uomega}{\|\uomega\|^2}=
\int_{\widehat{\R^2}}
f[\u0,\gamma(\uomega)^{-1}]\,
\frac{d\uomega}{\|\uomega\|^2}.
\end{equation}

In our calculations later, various 
matrices related to $A\in\gl2$ and $\uomega\in\cO$
arise and there are a number of identities involving
these matrices that are useful. We also use the
entries of the matrix 
$C_{A^{-1}\gamma(\uomega)}^{\,\,-1}$ and need a
notation for these entries. Let 
$u_{\uomega,A}= 
(0,1)C_{A^{-1}\gamma(\uomega)}^{\,\,-1}\begin{pmatrix}
1\\
0
\end{pmatrix}$ and $v_{\uomega,A}=
(0,1)C_{A^{-1}\gamma(\uomega)}^{\,\,-1}\begin{pmatrix}
0\\
1
\end{pmatrix}$. Then 
$C_{A^{-1}\gamma(\uomega)}^{\,\,-1}=\begin{pmatrix}
1 & 0\\
u_{\uomega,A} & v_{\uomega,A}
\end{pmatrix}$. The identities we need are collected
into a proposition.
\begin{prop}\label{identities} 
Let $A,B\in\gl2$ and $\uomega\in\cO$. Then
\begin{enumerate}[{\rm (a)}]
\item $M_A=\gamma\big((1,0)A^{-1}\big)$ and 
$C_A=\gamma\big((1,0)A^{-1}\big)^{-1}A$,
\medskip
\item $M_{A\gamma(\uomega)}=
\gamma\big(\uomega A^{-1}\big)$ and 
$C_{A\gamma(\uomega)}=
\gamma\big(\uomega A^{-1}\big)^{-1}A
\gamma(\uomega)$,
\medskip
\item $M_{A^{-1}\gamma(\uomega)}=
\gamma\big(\uomega A\big)$ and
$C_{A^{-1}\gamma(\uomega)}=
\gamma\big(\uomega A\big)^{-1}A^{-1}\gamma(\uomega)$,
\medskip
\item $u_{\uomega,AB}=
u_{\uomega,A}+v_{\uomega,A}u_{\uomega A,B}$ and
$v_{\uomega,AB}=v_{\uomega,A}v_{\uomega A,B}$,
\medskip
\item $\det\left(\gamma(\uomega)^{-1}\right)=\|\uomega\|^2$
and 
$\det\left(\gamma(\uomega)\right)=\|\uomega\|^{-2}$,
\medskip
\item $\det\left(C_{A^{-1}\gamma(\uomega)}\right)=
\frac{\|\uomega A\|^2}{\det(A)\|\uomega\|^2}$,
and 
\medskip
\item $v_{\uomega,A}=
\det\left(C_{A^{-1}\gamma(\uomega)}^{\,\,-1}\right)=
\frac{\det(A)\|\uomega\|^2}{\|\uomega A\|^2}$.
\end{enumerate} 
\end{prop}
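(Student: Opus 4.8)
The plan is to reduce the entire proposition to one structural fact about the factorization of Proposition \ref{factor_gl2}: for every $A\in\gl2$, the $K_0$-factor is exactly $M_A = \gamma\big((1,0)A^{-1}\big)$. Granting this, parts (a), (b), (c) become one-line substitutions, parts (e), (f), (g) are determinant computations, and part (d) follows from a cocycle identity for the $H_{(1,0)}$-factors. So almost everything is bookkeeping once the characterization of $M_A$ is in hand.

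To prove the structural fact, and hence (a), I would first note that every $C\in H_{(1,0)}$ fixes the row vector $(1,0)$ under right multiplication, i.e. $(1,0)C=(1,0)$, and since $H_{(1,0)}$ is a subgroup the same holds for $C^{-1}$. Writing $A^{-1}=C_A^{-1}M_A^{-1}$ and using $(1,0)C_A^{-1}=(1,0)$ gives $(1,0)A^{-1}=(1,0)M_A^{-1}$; that is, the top row of $M_A^{-1}$ equals $(1,0)A^{-1}$. Because $\gamma(\uomega)$ is by definition the unique element of $K_0$ whose inverse has top row $\uomega$, and $M_A\in K_0$, uniqueness forces $M_A=\gamma\big((1,0)A^{-1}\big)$; then $C_A=M_A^{-1}A=\gamma\big((1,0)A^{-1}\big)^{-1}A$, which is (a). Parts (b) and (c) follow by feeding $A\gamma(\uomega)$ and $A^{-1}\gamma(\uomega)$ into (a) and simplifying the argument of $\gamma$ with the standing identity $(1,0)\gamma(\uomega)^{-1}=\uomega$; for instance in (c), $(1,0)\big(A^{-1}\gamma(\uomega)\big)^{-1}=(1,0)\gamma(\uomega)^{-1}A=\uomega A$.

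The determinant parts are then immediate. Part (e) is the direct evaluation $\det\big(\gamma(\uomega)^{-1}\big)=\omega_1^2+\omega_2^2=\|\uomega\|^2$, with $\det(\gamma(\uomega))$ its reciprocal. For (f) I would take determinants in the formula $C_{A^{-1}\gamma(\uomega)}=\gamma(\uomega A)^{-1}A^{-1}\gamma(\uomega)$ from (c) and apply (e) to obtain $\det\big(C_{A^{-1}\gamma(\uomega)}\big)=\|\uomega A\|^2\,\det(A)^{-1}\,\|\uomega\|^{-2}$. Part (g) just records that the lower-triangular matrix $C_{A^{-1}\gamma(\uomega)}^{\,\,-1}=\begin{pmatrix} 1 & 0 \\ u_{\uomega,A} & v_{\uomega,A}\end{pmatrix}$ has determinant $v_{\uomega,A}$, which therefore equals the reciprocal of the quantity computed in (f).

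The only part needing genuine care is the cocycle identity (d), and the hard part there is purely the bookkeeping. I would combine two occurrences of (c): from $A^{-1}\gamma(\uomega)=\gamma(\uomega A)\,C_{A^{-1}\gamma(\uomega)}$ and $B^{-1}\gamma(\uomega A)=\gamma(\uomega AB)\,C_{B^{-1}\gamma(\uomega A)}$ one gets $(AB)^{-1}\gamma(\uomega)=B^{-1}A^{-1}\gamma(\uomega)=\gamma(\uomega AB)\,C_{B^{-1}\gamma(\uomega A)}\,C_{A^{-1}\gamma(\uomega)}$. Since the product $C_{B^{-1}\gamma(\uomega A)}C_{A^{-1}\gamma(\uomega)}$ lies in the subgroup $H_{(1,0)}$, this is a valid $K_0H_{(1,0)}$ factorization, so the uniqueness in Proposition \ref{factor_gl2} identifies the $H_{(1,0)}$-parts: $C_{(AB)^{-1}\gamma(\uomega)}=C_{B^{-1}\gamma(\uomega A)}C_{A^{-1}\gamma(\uomega)}$. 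Inverting reverses the order to $C_{(AB)^{-1}\gamma(\uomega)}^{\,\,-1}=C_{A^{-1}\gamma(\uomega)}^{\,\,-1}C_{B^{-1}\gamma(\uomega A)}^{\,\,-1}$, and multiplying the two lower-triangular matrices $\begin{pmatrix} 1 & 0 \\ u_{\uomega,A} & v_{\uomega,A}\end{pmatrix}\begin{pmatrix} 1 & 0 \\ u_{\uomega A,B} & v_{\uomega A,B}\end{pmatrix}$ reads off both claimed formulas simultaneously. The one place to be vigilant is keeping the factor order and the arguments $\uomega$ versus $\uomega A$ straight through the inversion, since a slip would swap the roles of $A$ and $B$ or lose the coefficient $v_{\uomega,A}$ multiplying $u_{\uomega A,B}$.
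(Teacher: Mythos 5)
Your proposal is correct and follows essentially the same route as the paper: both arguments rest on the uniqueness of the $K_0H_{(1,0)}$ factorization together with the identity $(1,0)\gamma(\uomega)^{-1}=\uomega$, the only cosmetic difference being that you establish (a) first and deduce (b) and (c), whereas the paper verifies (b) directly and specializes to get (a) and (c), and that for (d) you re-invoke uniqueness of the factorization where the paper simply inserts $\gamma(\uomega A)\gamma(\uomega A)^{-1}$ into the formula $C_{A^{-1}\gamma(\uomega)}^{\,\,-1}=\gamma(\uomega)^{-1}A\gamma(\uomega A)$. All steps check out, including the order-reversal under inversion in (d).
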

\begin{proof}
(a) and (c) follow from (b), since $\gamma\big((1,0)\big)$
is the identity matrix. For any $\uomega\in\cO$,
$\gamma\big(\uomega A^{-1}\big)\in K_0$ by
definition of $\gamma$. On the other hand,
\[
\begin{split}
(1,0)\left(\gamma\big(\uomega A^{-1}\big)^{-1}
A\gamma(\uomega)\right) & =
\left((1,0)\gamma\big(\uomega A^{-1}\big)^{-1}
\right)A\gamma(\uomega)\\
& =
\uomega A^{-1}A\gamma(\uomega)=\uomega\gamma(\uomega)
=(1,0).
\end{split}
\]
Thus $C_{A\gamma(\uomega)}=
\gamma\big(\uomega A^{-1}\big)^{-1}A
\gamma(\uomega)$ and $M_{A\gamma(\uomega)}=
\gamma\big(\uomega A^{-1}\big)$, by uniqueness. Clearly (e) is true while (f) and (g) follow from 
(c) and (e). It remains to verify (d). 
By (c), $C_{A^{-1}\gamma(\uomega)}^{\,\,-1}=
\gamma(\uomega)^{-1}A\gamma(\uomega A)$. Thus
\[
\begin{split}
C_{(AB)^{-1}\gamma(\uomega)}^{\,\,-1} & =
\gamma(\uomega)^{-1}(AB)\gamma(\uomega AB)=
\gamma(\uomega)^{-1}A\gamma(\uomega A)
\gamma(\uomega A)^{-1}B\gamma(\uomega AB)\\
& = C_{A^{-1}\gamma(\uomega)}^{\,\,-1}
C_{B^{-1}\gamma(\uomega A)}^{\,\,-1}.
\end{split}
\]
That is,
\[
\begin{split}
\begin{pmatrix}
1 & 0\\
u_{\uomega,AB} & v_{\uomega,AB}
\end{pmatrix} & = \begin{pmatrix}
1 & 0\\
u_{\uomega,A} & v_{\uomega,A}
\end{pmatrix}\begin{pmatrix}
1 & 0\\
u_{\uomega A,B} & v_{\uomega A,B}
\end{pmatrix}\\
& =
\begin{pmatrix}
1 & 0\\
u_{\uomega,A}+v_{\uomega,A}u_{\uomega A,B} & v_{\uomega,A}v_{\uomega A,B}
\end{pmatrix},
\end{split}
\]
which establishes (d).
\end{proof}
The detailed values of $u_{\uomega,A}$ and
$v_{\uomega,A}$ are usually not needed, but
may sometimes be useful.
\begin{prop}\label{u_omegaA_v_omegaA}
Let $A=\begin{pmatrix}
a & b\\
c& d
\end{pmatrix}\in\gl2$ and 
$\uomega=(\omega_1,\omega_2)\in\cO$. Then 
\[
u_{\uomega,A}=\frac{(ac+bd)(\omega_1^2-\omega_2^2)-(a^2+b^2-c^2-d^2)\omega_1\omega_2}{(a\omega_1+c\omega_2)^2+(b\omega_1+d\omega_2)^2}
\]
and
\[
v_{\uomega,A}=\frac{(ad-bc)(\omega_1^2+\omega_2^2)}{(a\omega_1+c\omega_2)^2+(b\omega_1+d\omega_2)^2}.
\]
\end{prop}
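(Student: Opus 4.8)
The plan is to evaluate the matrix $C_{A^{-1}\gamma(\uomega)}^{\,\,-1}$ directly and read off its bottom row. By Proposition \ref{identities}(c) we have $C_{A^{-1}\gamma(\uomega)}=\gamma(\uomega A)^{-1}A^{-1}\gamma(\uomega)$, so inverting gives
\[
C_{A^{-1}\gamma(\uomega)}^{\,\,-1}=\gamma(\uomega)^{-1}A\,\gamma(\uomega A).
\]
Every factor on the right is known explicitly: $\gamma(\uomega)^{-1}=\begin{pmatrix}\omega_1 & \omega_2\\-\omega_2 & \omega_1\end{pmatrix}$, while, writing $\uomega A=(a\omega_1+c\omega_2,\,b\omega_1+d\omega_2)$, the cross-section formula gives $\gamma(\uomega A)=\frac{1}{\|\uomega A\|^2}\begin{pmatrix}a\omega_1+c\omega_2 & -(b\omega_1+d\omega_2)\\ b\omega_1+d\omega_2 & a\omega_1+c\omega_2\end{pmatrix}$, where $\|\uomega A\|^2=(a\omega_1+c\omega_2)^2+(b\omega_1+d\omega_2)^2$ is precisely the common denominator appearing in both claimed formulas.

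First I would form the product $\gamma(\uomega)^{-1}A$, a single $2\times 2$ multiplication whose top row is exactly $\uomega A=(a\omega_1+c\omega_2,\,b\omega_1+d\omega_2)$. Multiplying on the right by $\gamma(\uomega A)$ and clearing the scalar $\|\uomega A\|^{-2}$, the top row of the product collapses to $(1,0)$ --- a reassuring consistency check that $C_{A^{-1}\gamma(\uomega)}^{\,\,-1}$ indeed has the normalized form $\begin{pmatrix}1 & 0\\u_{\uomega,A} & v_{\uomega,A}\end{pmatrix}$ fixed before the statement. The $(2,1)$ and $(2,2)$ entries then yield $u_{\uomega,A}$ and $v_{\uomega,A}$ respectively, each as a homogeneous quadratic in $(\omega_1,\omega_2)$ divided by $\|\uomega A\|^2$.

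The only genuine work is the algebraic bookkeeping in the bottom row: after expanding, I would group each numerator by the monomials $\omega_1^2$, $\omega_1\omega_2$, and $\omega_2^2$. For the $(2,1)$ entry the $\omega_1^2$ and $\omega_2^2$ terms combine into the coefficient $ac+bd$ on $\omega_1^2-\omega_2^2$, while the $\omega_1\omega_2$ terms combine to $-(a^2+b^2-c^2-d^2)$, producing the stated $u_{\uomega,A}$; for the $(2,2)$ entry the $\omega_1\omega_2$ contributions cancel and both square terms carry the coefficient $ad-bc$, giving numerator $(ad-bc)(\omega_1^2+\omega_2^2)$. I expect this sign-tracking to be the main (and essentially only) obstacle, as nothing conceptual intervenes.

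Finally, the expression for $v_{\uomega,A}$ can in fact be read off with no computation: Proposition \ref{identities}(g) already records $v_{\uomega,A}=\det(A)\|\uomega\|^2/\|\uomega A\|^2$, and substituting $\det(A)=ad-bc$ and $\|\uomega\|^2=\omega_1^2+\omega_2^2$ reproduces the claimed formula. Thus the matrix multiplication is strictly needed only for $u_{\uomega,A}$, and the value it produces for the $(2,2)$ entry serves as an independent check against (g).
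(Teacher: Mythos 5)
Your proposal is correct and is exactly the ``straightforward calculation from the definitions'' that the paper's one-line proof alludes to: you use Proposition \ref{identities}(c) to write $C_{A^{-1}\gamma(\uomega)}^{\,\,-1}=\gamma(\uomega)^{-1}A\gamma(\uomega A)$, multiply the explicit matrices, and read off the bottom row (the expansion of $rp+sq$ and $sp-rq$ with $p=a\omega_1+c\omega_2$, $q=b\omega_1+d\omega_2$ checks out). The observation that $v_{\uomega,A}$ also follows instantly from Proposition \ref{identities}(g) is a nice independent confirmation but not a different method.
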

\begin{proof}
These are both obtained by straightforward calculation
from the definitions of $u_{\uomega,A}$ and
$v_{\uomega,A}$.
\end{proof}

The map $[u,v]\to\begin{pmatrix}
1 & 0\\
u & v
\end{pmatrix}$ is an isomorphism of the group
$G_1=\R\rtimes\R^*$ with 
 $H_{(1,0)}$.
We saw that $G_1$ has, up to equivalence, one
square-integrable representation. One realization is
$\pi^1$, which acts on $L^2(\R^*)$ as in 
\eqref{pi_1}.
We will simplify notation by  also
considering $\pi^1$ as an irreducible representation
of $H_{(1,0)}$. That is, if $C=\begin{pmatrix}
1 & 0\\
u & v
\end{pmatrix}\in H_{(1,0)}$, 
then we let $\pi^1(C)=\pi^1[u,v]$. Lift $\pi^1$
to $H$. That is, let
$\tilde{\pi}^1[\ux,C]=\pi^1(C)$, for every
$[\ux,C]\in H$.
The representation
$\chi_{(1,0)}\otimes\tilde{\pi}^1$ of $H$, given by
\[
\left(\chi_{(1,0)}\otimes\tilde{\pi}^1\right)[\ux,C]=
\chi_{(1,0)}(\ux)\pi^1(C),\quad\text{for }
[\ux,C]\in H,
\]
is an irreducible representation of $H$ on
$L^2(\R^*)$. Then 
${\rm ind}_H^{G_2}\left(\chi_{(1,0)}\otimes\tilde{\pi}^1\right)$
is an irreducible representation of $G_2$ 
by Theorem 4.24 of \cite{KT}. By Proposition 
\ref{KH_ind},
${\rm ind}_H^G\left(\chi_{(1,0)}\otimes
\tilde{\pi}^1\right)$
is equivalent to a representation $\sigma$ acting on
$L^2\big(K,L^2(\R^*)\big)$. In preparation for
defining $\sigma$, take $[\ux,A]\in G_2$, and 
$[\u0,L]\in K$ and compute $[\ux,A]^{-1}[\u0,L]=
[-A^{-1}\ux,A^{-1}L]$. Now factor
\[
[-A^{-1}\ux,A^{-1}L]=[\u0,M_{A^{-1}L}]
[-M_{A^{-1}L}^{\,\,-1}A^{-1}\ux ,C_{A^{-1}L}],
\] with
the elements
$[\u0,M_{A^{-1}L}]\in K$ and 
$[-M_{A^{-1}L}^{\,\,-1}A^{-1}\ux ,C_{A^{-1}L}]\in H$.
Observe that
\[
C_{_{A^{-1}L}}^{\,\,\,\,-1}M_{A^{-1}L}^{\,\,-1}A^{-1}=
\big(M_{A^{-1}L}C_{_{A^{-1}L}}\big)^{-1}A^{-1}=L^{-1}.
\]
Therefore, 
\[
[-M_{A^{-1}L}^{\,\,-1}A^{-1}\ux ,C_{A^{-1}L}]^{-1}=
[L^{-1}\ux,
C_{_{A^{-1}L}}^{\,\,\,\,-1}].
\]
Thus, Proposition \ref{KH_ind} gives, for $F\in L^2\big(K,L^2(\R^*)\big)$, $[\ux,A]\in G$, and 
$[\u0,L]\in K$, 
\begin{equation}\label{sigma}  
\begin{split}
\sigma[\ux,A]F[\u0,L] & = 
\left|\det\left(C_{_{A^{-1}L}}\right)
\right|^{-1/2}\big(\chi_{(1,0)}\otimes
\pi^1\big)\left[L^{-1}\ux,
C_{_{A^{-1}L}}^{\,\,\,\,-1}\right]
F[\u0,M_{A^{-1}L}]\\
& =\left|\det\left(C_{_{A^{-1}L}}\right)
\right|^{-1/2}e^{2\pi i(1,0)L^{-1}\ux}\,
\pi^1\!\left(C_{_{A^{-1}L}}^{\,\,\,\,-1}\right)
F[\u0,M_{A^{-1}L}].
\end{split}
\end{equation}
Use of
the homeomorphism $\gamma:\cO\to K_0$ and the 
identities collected in Proposition \ref{identities} help make
the expression given in \eqref{sigma} easier to
read. For $F\in L^2\big(K,L^2(\R^*)\big)$, 
$[\ux,A]\in G_2$, and a.e. $\uomega\in\cO$,
\begin{equation}\label{sigma2}
\sigma[\ux,A]F[\u0,\gamma(\uomega)] = 
\textstyle\frac{|\det(A)|^{1/2}\|\uomega\|}{\|\uomega A\|}
e^{2\pi i\uomega\ux}\,
\pi^1\big(\gamma(\uomega)^{-1}A\gamma(\uomega A)\big)
F[\u0,\gamma(\uomega A)].
\end{equation}
In \eqref{sigma2}, $\sigma[\ux,A]F[\u0,\gamma(\uomega)]\in L^2(\R^*)$. Before evaluating it,
we note that 
\[
\gamma(\uomega)^{-1}A\gamma(\uomega A)
=C_{A^{-1}\gamma(\uomega)}^{\,\,-1}=
\begin{pmatrix}
1 & 0\\
u_{\uomega,A} & v_{\uomega,A},
\end{pmatrix}
\]
so $\pi^1\big(\gamma(\uomega)^{-1}A\gamma(\uomega A)\big)=\pi^1[u_{\uomega,A}, v_{\uomega,A}]$, when
we consider $\pi^1$ as a representation of $G_1$.
Using \eqref{pi_1} and \eqref{sigma2}, we have, for
a.e. $t\in\R^*$, 
\begin{equation}\label{sigma_pointwise_formula}
\big(\sigma[\ux,A]F[\u0,\gamma(\uomega)]\big)(t) = 
\textstyle\frac{|\det(A)|^{1/2}\|\uomega\|}{\|\uomega A\|}
e^{2\pi i(\uomega\ux+t^{-1}u_{\uomega,A})}
\big(F[\u0,\gamma(\uomega A)]\big)
(v_{\uomega,A}^{-1}t),
\end{equation}
for a.e. $\uomega\in\cO$, with 
$F\in L^2\big(K,L^2(\R^*)\big)$
and $[\ux,A]\in G_2$. 
By construction, $\sigma$ is equivalent to 
${\rm ind}_H^G\left(\chi_{(1,0)}\otimes
\tilde{\pi}^1\right)$, which is irreducible. 
\begin{prop}
The representation $\sigma$ of $G_2$ given by
\eqref{sigma_pointwise_formula} is irreducible.
\end{prop}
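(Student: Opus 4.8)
The plan is to reduce the statement to the irreducibility of an induced representation and then invoke the Mackey machine, since essentially all of the work has already been done in setting up $\sigma$.

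First I would recall that, by construction together with Proposition \ref{KH_ind}, the representation $\sigma$ is unitarily equivalent to ${\rm ind}_H^{G_2}\left(\chi_{(1,0)}\otimes\tilde{\pi}^1\right)$, where $H=\R^2\rtimes H_{(1,0)}$ is the stabilizer in $G_2$ of the character $\chi_{(1,0)}$ of the normal subgroup $N$. Because a unitary equivalence carries the lattice of closed invariant subspaces of one representation bijectively onto that of the other, $\sigma$ is irreducible precisely when this induced representation is. So it suffices to prove that the latter is irreducible.

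Next I would check that the two hypotheses needed to apply the Mackey irreducibility criterion (Theorem 4.24 of \cite{KT}, or Theorem 6.42 of \cite{Fol}) are in force. The first is that the inducing representation $\chi_{(1,0)}\otimes\tilde{\pi}^1$ of $H$ is itself irreducible: on $N$ it acts as the scalar-valued character $\chi_{(1,0)}$, so every closed invariant subspace is automatically invariant under $H_{(1,0)}$ alone, and since $\pi^1$ is the irreducible square-integrable representation of $G_1\cong H_{(1,0)}$ exhibited in Corollary \ref{mult_G_1} and Proposition \ref{C_pi_1}, no proper nonzero invariant subspace can exist. The second is that the action of $\gl2$ on $\widehat{\R^2}$ is regular; this holds because there are only the two orbits $\{\u0\}$ and the open orbit $\cO$, so every orbit is locally closed, and $(1,0)$ is an orbit representative whose stabilizer in $\gl2$ is exactly $H_{(1,0)}$, as verified in Section 4.

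With both hypotheses confirmed, Theorem 4.24 of \cite{KT} gives that ${\rm ind}_H^{G_2}\left(\chi_{(1,0)}\otimes\tilde{\pi}^1\right)$ is irreducible, and hence so is $\sigma$. There is no serious obstacle here, since the construction was arranged precisely so that $\sigma$ realizes this induced representation; the only point I would take care to spell out is the matching of the Mackey little-group data to our concrete choices, namely that $H$ is the \emph{full} stabilizer of $\chi_{(1,0)}$ in $G_2$ and that $\tilde{\pi}^1$ is the lift to $H$ of the irreducible representation $\pi^1$ of $H_{(1,0)}$.
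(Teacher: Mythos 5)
Your argument is exactly the one the paper uses: $\sigma$ is by construction a concrete realization of ${\rm ind}_H^{G_2}\left(\chi_{(1,0)}\otimes\tilde{\pi}^1\right)$, and irreducibility follows from the Mackey criterion (Theorem 4.24 of \cite{KT}) applied to the irreducible representation $\chi_{(1,0)}\otimes\tilde{\pi}^1$ of the stabilizer $H$. Your additional care in verifying the hypotheses (irreducibility of the inducing representation and regularity of the orbit structure) is welcome but does not change the route.
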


Corollary \ref{pi_1_V_g_isometry}  
says that, if $g\in L^2(\R^*)$
satisfies $\int_{\R^*}\big||\nu|^{1/2}g(\nu)\big|^2
d\mu_{\R^*}(\nu)=1$, then $V_g:L^2(\R^*)\to 
L^2(H_{(1,0)})$ is an isometry that intertwines
$\pi^1$ with $\lambda_{H_{(1,0)}}$. Recall that
\[
V_gf(D)=\langle f,\pi^1(D)g\rangle_{_{L^2(\R^*)}},
\text{ for all } D\in H_{(1,0)}, f\in L^2(\R^*).
\]
Let $\cK_g=V_g L^2(\R^*)$, a closed 
$\lambda_{H_{(1,0)}}$-invariant subspace of 
$L^2(H_{(1,0)})$.

Let $V_g':L^2\big(K,L^2(\R^*)\big)\to 
L^2\big(K,L^2(H_{(1,0)})\big)$ be given by
$\big(V_g'F\big)[\u0,L]=V_g\big(F[\u0,L]\big)$,
for all $[\u0,L]\in K$ and $F\in 
L^2\big(K,L^2(\R^*)\big)$. Since $V_g$ is an
isometry, so is $V_g'$ and the range of $V_g'$
is $L^2(K,\cK_g)$. For $[\ux,A]\in G_2$, $F\in 
L^2\big(K,L^2(\R^*)\big)$, and $\uomega\in\cO$,
\[
\begin{split}
(V_g'\sigma[\ux,A]F)[\u0,\gamma(\uomega)] & =
\textstyle\frac{|\det(A)|^{1/2}\|\uomega\|}{\|\uomega A\|}
e^{2\pi i\uomega\ux}\,V_g\big(
\pi^1\big(\gamma(\uomega)^{-1}A\gamma(\uomega A)\big)
F[\u0,\gamma(\uomega A)]\big)\\
& = 
\textstyle\frac{|\det(A)|^{1/2}\|\uomega\|}{\|\uomega A\|}
e^{2\pi i\uomega\ux}\,
\lambda_{H_{(1,0)}}\big(\gamma(\uomega)^{-1}A\gamma(\uomega A)\big)
V_g\big(
F[\u0,\gamma(\uomega A)]\big).
\end{split}
\]
Thus, $\sigma$ is equivalent to a representation 
$\tilde{\sigma}$ acting on $L^2(K,\cK_g)$ as follows: For
$[\ux,A]\in G_2$,  $\varphi\in 
L^2(K,\cK_g)$, and $\uomega\in\cO$,
\begin{equation}\label{sigma_2}
\big(\tilde{\sigma}[\ux,A]\varphi\big)[\u0,\gamma(\uomega)]=
\textstyle\frac{|\det(A)|^{1/2}\|\uomega\|}{\|\uomega A\|}
e^{2\pi i\uomega\ux}\,
\lambda_{H_{(1,0)}}\big(\gamma(\uomega)^{-1}A\gamma(\uomega A)\big)
\varphi[\u0,\gamma(\uomega A)].
\end{equation}
The next step is to map $L^2(K,\cK_g)$ isometrically
into $L^2\big(\gl2\big)$. 

Let $W_1:L^2\big(\gl2\big)\to 
L^2\big(K,L^2(H_{(1,0)})\big)$ be given by
\[
\big(W_1f[\u0,M]\big)(C)=|\det(C)|^{1/2}f(MC),
\]
for all $C\in H_{(1,0)}$, $[\u0,M]\in K$.
\begin{prop}\label{W_inverse}
The map $W_1$ is a unitary map onto $L^2\big(K,L^2(H_{(1,0)})\big)$
and its inverse is given by, for $F\in 
L^2\big(K,L^2(H_{(1,0)})\big)$ and $B\in\gl2$,
\[
\big(W_1^{-1}F\big)(B) =|\det(C_B)|^{-1/2}
(F[\u0,M_B])(C_B).
\]
\end{prop}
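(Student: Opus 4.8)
The plan is to show that $W_1$ is a surjective isometry by combining the integration formula that normalizes the Haar measures on $\gl2$, $K_0$, and $H_{(1,0)}$ with the uniqueness of the factorization in Proposition \ref{factor_gl2}. Throughout I identify $K$ with $K_0$ via $[\u0,M]\leftrightarrow M$, so that $\mu_K$ corresponds to $\mu_{K_0}$. Since $\gl2$ is unimodular, $\Delta_{\gl2}\equiv1$, and since $[u,v]\to\begin{pmatrix}1&0\\u&v\end{pmatrix}$ is an isomorphism of $G_1=\R\rtimes\R^*$ onto $H_{(1,0)}$ carrying $\det$ to the $\R^*$-coordinate, the modular function of $H_{(1,0)}$ is $\Delta_{H_{(1,0)}}(C)=|\det(C)|^{-1}$. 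The weight $|\det(C)|^{1/2}$ built into $W_1$ is present precisely to absorb this modular discrepancy, and verifying that it does so is the crux of the argument.

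First I would verify that $W_1$ is isometric. For $f\in L^2(\gl2)$,
\[
\|W_1f\|^2=\int_{K_0}\int_{H_{(1,0)}}|\det(C)|\,|f(MC)|^2\,d\mu_{H_{(1,0)}}(C)\,d\mu_{K_0}(M).
\]
Applying the normalization formula from Section 3 to the function $g$ with $g(MC)=|\det(C)|\,|f(MC)|^2$, and using that $B=MC$ has $M_B=M$ and $C_B=C$ by Proposition \ref{factor_gl2}, the right-hand side becomes $\int_{\gl2}|\det(C_B)|\,|f(B)|^2\,|\det(C_B)|^{-1}\,d\mu_{\gl2}(B)=\|f\|_{L^2(\gl2)}^2$, where the factor $|\det(C_B)|^{-1}$ is exactly the modular term $\Delta_{H_{(1,0)}}(C_B)/\Delta_{\gl2}(C_B)$. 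Hence $W_1$ is an isometry into $L^2\big(K,L^2(H_{(1,0)})\big)$.

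Next I would confirm surjectivity by showing that the stated formula defines a two-sided inverse. By the same integration formula run in reverse, the map $T$ given by the displayed right-hand side, $TF(B)=|\det(C_B)|^{-1/2}(F[\u0,M_B])(C_B)$, is itself an isometry into $L^2(\gl2)$; in particular $TF\in L^2(\gl2)$. Uniqueness of factorization gives $M_{MC}=M$ and $C_{MC}=C$ for $M\in K_0$ and $C\in H_{(1,0)}$, so substituting $B=MC$ yields $W_1TF=F$ and $TW_1f=f$ pointwise, whence $T=W_1^{-1}$ and $W_1$ is unitary. The only delicate point is the measure bookkeeping in these two change-of-variables computations; once one checks that the square of the $|\det(C)|^{1/2}$ weight cancels the modular factor $|\det(C_B)|^{-1}$, the rest is formal manipulation with the unique factorization.
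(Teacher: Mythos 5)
Your argument is correct and rests on the same two pillars as the paper's proof: the weighted integration formula $\int_{\gl2}h\,d\mu_{\gl2}=\int_{K_0}\int_{H_{(1,0)}}h(MC)\,|\det(C)|\,d\mu_{H_{(1,0)}}(C)\,d\mu_{K_0}(M)$ (which, as you rightly observe, encodes the modular discrepancy $\Delta_{H_{(1,0)}}(C)=|\det(C)|^{-1}$ against the unimodularity of $\gl2$) and the uniqueness of the factorization $B=M_BC_B$. The isometry computation is identical to the paper's. Where you diverge is in establishing surjectivity: you define the candidate inverse $T$ directly on all of $L^2\big(K,L^2(H_{(1,0)})\big)$ by the displayed formula and check $W_1T=\mathrm{id}$ and $TW_1=\mathrm{id}$ pointwise, whereas the paper works only with the dense family of elementary tensors $F_{k,h}[\u0,M](C)=k(M)h(C)$, $k\in C_c(K_0)$, $h\in C_c(H_{(1,0)})$, exhibits explicit continuous preimages $f_{k,h}(B)=|\det(C_B)|^{-1/2}k(M_B)h(C_B)$, and then invokes closedness of the range of an isometry plus totality of the $F_{k,h}$ to get both surjectivity and the inverse formula. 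Your route is shorter, but it quietly requires identifying $L^2\big(K,L^2(H_{(1,0)})\big)$ with $L^2(K_0\times H_{(1,0)})$ so that the expression $\big(F[\u0,M_B]\big)(C_B)$ makes sense as a jointly measurable function of $B$ for an \emph{arbitrary} vector-valued $F$; for a general element of the vector-valued $L^2$ space, $F[\u0,M]$ is only an equivalence class for a.e.\ $M$, so the pointwise evaluation needs a measurable choice of representatives. That identification is standard (and the paper uses it explicitly in Section 6), so this is a gloss rather than a gap, but it is exactly the technicality the paper's density argument is designed to avoid. You should either state the identification up front or restrict the pointwise verification to a total set, as the paper does.
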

\begin{proof}
For
any integrable $h$ on $\gl2$, the Haar integral
on $\gl2$ can be expressed as
\[
\int_{\gl2}h\,d\mu_{\gl2}=\int_{K_0}\int_{H_{(1,0)}}
h(MC)\,|\det(C)|\,d\mu_{H_{(1,0)}}(C)\,d\mu_{K_0}(M).
\]
Thus, for any $f\in L^2(\gl2)$,
\[
\begin{split}
\int_{K_0}\|W_1f[\u0,M]\|_{_{L^2(H_{(1,0)})}}^{\,\,\,2}
d\mu_{K_0}(M) & = \int_{K_0}\int_{H_{(1,0)}}
|(W_1f[\u0,M])(C)|^2d\mu_{H_{(1,0)}}d\mu_{K_0}(M)\\
& = \int_{K_0}\int_{H_{(1,0)}}
|f(MC)|^2|\det(C)|\,
d\mu_{H_{(1,0)}}\,d\mu_{K_0}(M)\\
& =\int_{\gl2}|f|^2\,d\mu_{\gl2}<\infty.
\end{split}
\] 
Hence, $W_1f\in L^2\big(K,L^2(H_{(1,0)})\big)$ and
$W_1$ is an isometry. It is clear $W_1$ is linear.
Therefore, the range of $W_1$ is a closed subspace
of $L^2\big(K,L^2(H_{(1,0)})\big)$. We need to show 
that the range of $W_1$ is all of 
$L^2\big(K,L^2(H_{(1,0)})\big)$. For any 
$k\in C_c(K_0)$ and $h\in C_c(H_{(1,0)})$,
define $F_{k,h}\in L^2\big(K,L^2(H_{(1,0)})\big)$
by $\big(F_{k,h}[\u0,M]\big)(C)=k(M)h(C)$. The
linear span of $\{F_{k,h}:k\in C_c(K_0),
h\in C_c(H_{(1,0)})\}$ is dense in 
$L^2\big(K,L^2(H_{(1,0)})\big)$. So we just need to show each $F_{k,h}$ is in $W_1L^2\big(\gl2\big)$.
For $k\in C_c(K_0)$ and $h\in C_c(H_{(1,0)})$,
let $f_{k,h}(B)=|\det(C_B)|^{-1/2}k(M_B)h(C_B)$,
for all $B\in\gl2$. Since $B\to (M_B,C_B)$ is a
homeomorphism of $\gl2$ with $K_0\times H_{(1,0)}$
and $B\to|\det(C_B)|^{-1/2}$ is continuous, 
$f_{k,h}\in C_c\big(\gl2\big)\subseteq
L^2\big(\gl2\big)$. Moreover, since $M_{MC}=M$ and
$C_{MC}=C$,
\[
\begin{split}
\big(W_1f_{k,h}[\u0,M]\big)(C) & =|\det(C)|^{1/2}
f_{k,h}(MC)=|\det(C)|^{1/2}|\det(C)|^{-1/2}k(M)h(C)\\
& = \big(F_{k,h}[\u0,M]\big)(C),
\end{split}
\]
for any $C\in H_{(1,0)}$ and $[\u0,M]\in K$. Thus,
$F_{k,h}\in W_1L^2\big(\gl2\big)$, for any 
$k\in C_c(K_0)$ and $h\in C_c(H_{(1,0)})$. This
implies $W_1$ is a unitary map onto 
$L^2\big(K,L^2(H_{(1,0)})\big)$.
Also, $W_1^{-1}F_{k,h}=f_{k,h}$ so, for any $B\in\gl2$,
\[
\begin{split}
W_1^{-1}F_{k,h}(B) & =f_{k,h}(B)=|\det(C_B)|^{-1/2}k(M_B)h(C_B)\\
&=|\det(C_B)|^{-1/2}
(F_{k,h}[\u0,M_B])(C_B).
\end{split}
\]
Since $\{F_{k,h}:k\in C_c(K_0),h\in C_c(H_{(1,0)}\}$
is total in $L^2\big(K,L^2(H_{(1,0)})\big)$, 
\[
\big(W_1^{-1}F\big)(B)=|\det(C_B)|^{-1/2}
(F[\u0,M_B])(C_B),
\]
for all $B\in\gl2$
 and 
$F\in L^2\big(K,L^2(H_{(1,0)})\big)$.
\end{proof}

Continuing with a fixed $g\in L^2(\R^*)$ satisfying
$\int_{\R^*}\big||\nu|^{1/2}g(\nu)\big|^2
d\mu_{\R^*}(\nu)=1$, let 
\[
\cH_g=W_1^{-1}L^2(K,\cK_g)\subseteq L^2\big(\gl2\big).
\]
Then $\cH_g$ is a closed subspace of $L^2\big(\gl2
\big)$ and $W_1:\cH_g\to L^2(K,\cK_g)$ is a unitary 
map. Note that we use the same notation for 
$W_1$ and its restriction to $\cH_g$. Recall 
from \eqref{pi_omega} the representation  $\pi^{(1,0)}$ of
$G_2$ acting on the Hilbert space $L^2\big(\gl2\big)$.
For $[\ux,A]\in G_2$ and $f\in L^2\big(\gl2\big)$,
\[
\pi^{(1,0)}[\ux,A]f(B)=e^{2\pi i(1,0) B^{-1}\ux}
f(A^{-1}B), \text{ for a.e. } B\in \gl2.
\]
\begin{prop}
Let $g\in L^2(\R^*)$ satisfy
$\int_{\R^*}\big||\nu|^{1/2}g(\nu)\big|^2
d\mu_{\R^*}(\nu)=1$. The subspace $\cH_g$ of
$L^2\big(\gl2\big)$ is $\pi^{(1,0)}$-invariant and 
the restriction of $\pi^{(1,0)}$ to $\cH_g$ is 
equivalent to $\tilde{\sigma}$ via the unitary map
$W_1:\cH_g\to L^2(K,\cK_g)$.
\end{prop}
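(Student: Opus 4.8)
The plan is to transport $\pi^{(1,0)}$ through the unitary $W_1$ to the full space $L^2\big(K,L^2(H_{(1,0)})\big)$ and recognize the result as an extension of $\tilde{\sigma}$. Concretely, I would fix $[\ux,A]\in G_2$ and $F\in L^2\big(K,L^2(H_{(1,0)})\big)$, set $f=W_1^{-1}F$ as in Proposition \ref{W_inverse}, and evaluate $\big(W_1\pi^{(1,0)}[\ux,A]f[\u0,\gamma(\uomega)]\big)(C)$ for $C\in H_{(1,0)}$ and a.e. $\uomega\in\cO$. Writing $B=\gamma(\uomega)C$, so that $M_B=\gamma(\uomega)$ and $C_B=C$ by uniqueness of the factorization, the definition of $W_1$ renders this quantity as $|\det(C)|^{1/2}e^{2\pi i(1,0)B^{-1}\ux}f(A^{-1}B)$.

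The first simplification handles the character: since $C^{-1}\in H_{(1,0)}$ fixes $(1,0)$ and $(1,0)\gamma(\uomega)^{-1}=\uomega$, we get $(1,0)B^{-1}=(1,0)\gamma(\uomega)^{-1}=\uomega$, so the exponential becomes $e^{2\pi i\uomega\ux}$. The heart of the computation is the factorization of $A^{-1}B=A^{-1}\gamma(\uomega)C$. Because $C\in H_{(1,0)}$ can be absorbed on the right, Proposition \ref{identities}(c) yields $M_{A^{-1}B}=M_{A^{-1}\gamma(\uomega)}=\gamma(\uomega A)$ and $C_{A^{-1}B}=C_{A^{-1}\gamma(\uomega)}C$. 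Substituting the formula for $W_1^{-1}F$ then produces the factor $|\det(C_{A^{-1}\gamma(\uomega)}C)|^{-1/2}=|\det(C_{A^{-1}\gamma(\uomega)})|^{-1/2}|\det(C)|^{-1/2}$, and Proposition \ref{identities}(f) converts the first factor into $\frac{|\det(A)|^{1/2}\|\uomega\|}{\|\uomega A\|}$, whose $|\det(C)|^{-1/2}$ companion cancels the leading $|\det(C)|^{1/2}$ coming from $W_1$.

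What remains is the evaluation $\big(F[\u0,\gamma(\uomega A)]\big)(C_{A^{-1}\gamma(\uomega)}C)$. Recognizing right translation by $C_{A^{-1}\gamma(\uomega)}$ as the left regular action, I would rewrite this as $\lambda_{H_{(1,0)}}\big(C_{A^{-1}\gamma(\uomega)}^{\,\,-1}\big)\big(F[\u0,\gamma(\uomega A)]\big)(C)$ and invoke Proposition \ref{identities}(c) once more in the form $C_{A^{-1}\gamma(\uomega)}^{\,\,-1}=\gamma(\uomega)^{-1}A\gamma(\uomega A)$. Assembling the pieces gives precisely the right-hand side of \eqref{sigma_2}, so $W_1\pi^{(1,0)}[\ux,A]W_1^{-1}$ equals the representation $\hat{\sigma}$ defined by the same formula as $\tilde{\sigma}$ but acting on all of $L^2\big(K,L^2(H_{(1,0)})\big)$. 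It suffices to verify this identity on the dense subspace spanned by the $F_{k,h}$ of Proposition \ref{W_inverse}, where $W_1^{-1}$ is given by a genuine function.

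Finally, since $\cK_g$ is $\lambda_{H_{(1,0)}}$-invariant, the formula for $\hat{\sigma}$ shows that $L^2(K,\cK_g)$ is $\hat{\sigma}$-invariant and that $\hat{\sigma}$ restricts there to $\tilde{\sigma}$. As $W_1$ is a unitary intertwining $\pi^{(1,0)}$ with $\hat{\sigma}$ and $\cH_g=W_1^{-1}L^2(K,\cK_g)$ by definition, it follows that $\cH_g$ is $\pi^{(1,0)}$-invariant and that $W_1$ restricts to a unitary $\cH_g\to L^2(K,\cK_g)$ intertwining $\pi^{(1,0)}|_{\cH_g}$ with $\tilde{\sigma}$. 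The main obstacle is purely the bookkeeping in the central computation, in particular ensuring that the character collapses to $e^{2\pi i\uomega\ux}$ and that every determinant factor combines into the single ratio $\frac{|\det(A)|^{1/2}\|\uomega\|}{\|\uomega A\|}$; once the identities of Proposition \ref{identities} are applied in the right order, the rest is formal.
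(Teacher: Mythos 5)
Your proposal is correct and is essentially the paper's argument run in the opposite direction: the paper conjugates $\tilde{\sigma}$ by $W_1^{-1}$ on $L^2(K,\cK_g)$ and lands on $\pi^{(1,0)}$, while you conjugate $\pi^{(1,0)}$ by $W_1$ on all of $L^2\big(K,L^2(H_{(1,0)})\big)$ and recognize the formula \eqref{sigma_2}, then restrict using the $\lambda_{H_{(1,0)}}$-invariance of $\cK_g$. The identities invoked, the determinant bookkeeping, and the collapse of the character to $e^{2\pi i\uomega\ux}$ all match the paper's computation, so this is the same proof in substance.
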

\begin{proof}
Let $[\ux,A]\in G_2$. For $f\in\cH_g$, let $F=W_1f 
\in L^2(K,\cK_g)$. Since $L^2(K,\cK_g)$ is 
$\tilde{\sigma}$-invariant, 
$\tilde{\sigma}[\ux,A]F\in L^2(K,\cK_g)$ as well. Thus 
$W_1^{-1}\tilde{\sigma}[\ux,A]F=
W_1^{-1}\tilde{\sigma}[\ux,A]W_1f\in\cH_g$. 

For any $B\in\gl2$, let $\uomega=(1,0)B^{-1}$. By Proposition \ref{identities} (a) 
$M_B=\gamma(\uomega)$ and
$C_B=\gamma(\uomega)^{-1}B$. Then, using
\eqref{sigma_2} and 
$|\det\big(\gamma(\uomega)\big)|^{1/2}=
\|\uomega\|^{-1}$,
\[
\begin{split}
W_1^{-1}\tilde{\sigma}[\ux,A]F(B) & =|\det(C_B)|^{-1/2}
\big(\tilde{\sigma}[\ux,A]F[\u0,M_B]\big)(C_B)\\
& = 
\textstyle\frac{|\det(\gamma(\uomega))|^{1/2}}{|\det(B)|^{1/2}}
\big(\tilde{\sigma}[\ux,A]
F[\u0,\gamma(\uomega)]\big)(\gamma(\uomega)^{-1}B)\\
& =\textstyle
\frac{|\det(A)|^{1/2}}{|\det(B)|^{1/2}\|\uomega A\|}
e^{2\pi i\uomega\ux}\,
\lambda_{H_{(1,0)}}\big(\gamma(\uomega)^{-1}A\gamma(\uomega A)\big)
\big(F[\u0,\gamma(\uomega A)]\big)
(\gamma(\uomega)^{-1}B)\\
& = \textstyle
\frac{|\det(A)|^{1/2}}{|\det(B)|^{1/2}\|\uomega A\|}
e^{2\pi i\uomega\ux}\,
\big(F[\u0,\gamma(\uomega A)]\big)
(\gamma(\uomega A)^{-1}A^{-1}B)\\
& = \textstyle
\frac{|\det(A)|^{1/2}}{|\det(B)|^{1/2}\|\uomega A\|}
e^{2\pi i\uomega\ux}\,
\big(W_1f[\u0,\gamma(\uomega A)]\big)
\big(\gamma(\uomega A)^{-1}A^{-1}B\big).
\end{split}
\]
Before applying $W_1$, note that 
$|\det\big(\gamma(\uomega A)^{-1}A^{-1}B\big)|^{1/2}
=\frac{|\det(B)|^{1/2}\|\uomega A\|}{|\det(A)|^{1/2}}
$, which will cancel the first factor in the
previous expression. Therefore, recalling that
$\uomega=(1,0)B^{-1}$,
\[
W_1^{-1}\tilde{\sigma}[\ux,A]W_1f(B) = 
e^{2\pi i\uomega\ux}f(A^{-1}B)
=e^{2\pi i(1,0)B^{-1}\ux}f(A^{-1}B)=
\pi^{(1,0)}[\ux,A]f(B).
\]
This implies that $\cH_g$ 
is $\pi^{(1,0)}$-invariant and 
the restriction of $\pi^{(1,0)}$ to $\cH_g$ is 
equivalent to $\tilde{\sigma}$.
\end{proof}

Recall Theorem \ref{decomp_GLn_1}. The nontrivial
orbit in the one dimensional case is 
$\cO_1=\widehat{R}\setminus\{0\}$, which is
naturally identified with $\R^*$. Let
$\cD=L^2(\cO_1)\cap L^2(\R^*)$. Note that
\[
\cD=\left\{h:\widehat{\R}\setminus\{0\}\to\C\,\big|\,
h \text{ measurable },\int_{-\infty}^\infty
|h(t)|^2dt<\infty \text{ and }
\int_{-\infty}^\infty
|h(t)|^2\frac{dt}{|t|}<\infty\right\},
\] 
which can be considered as a dense subspace of
either $L^2(\cO_1)$ or $L^2(\R^*)$.
Fix $\{g_j:j\in J\}
\subseteq\cD$ such that $\{g_j:j\in J\}$ is an
orthonormal basis in $L^2(\cO_1)$. Let 
$w_2(\nu)=|\nu|^{1/2}$, for $\nu\in\widehat{\R}
\setminus\{0\}$. We state the following 
elementary lemma for later use.
\begin{lemma}\label{w_2_unitary_map}
The map $h\to w_2h$ is a unitary map of $L^2(\cO_1)$
onto $L^2(\R^*)$. In particular,
$\{w_2g_j:j\in J\}$ is an orthonormal basis of
$L^2(\R^*)$.
\end{lemma}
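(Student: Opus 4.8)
The plan is to verify directly that the map $M\colon h\mapsto w_2h$ is a surjective linear isometry from $L^2(\cO_1)$ onto $L^2(\R^*)$; unitarity then follows immediately, and the second assertion is a formal consequence. The one subtlety to keep in mind throughout is that $\cO_1=\widehat{\R}\setminus\{0\}$ and $\R^*$ share the same underlying set but carry different measures: $L^2(\cO_1)$ uses Lebesgue measure $d\nu$, whereas $L^2(\R^*)$ uses Haar measure $d\mu_{\R^*}(\nu)=\frac{d\nu}{|\nu|}$, as recorded just before \eqref{pi_1}.

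Linearity of $M$ is clear. For the isometry property, I would compute, for $h\in L^2(\cO_1)$,
\[
\|w_2h\|_{L^2(\R^*)}^{\,\,2}=\int_{\R^*}\big||\nu|^{1/2}h(\nu)\big|^2\,\frac{d\nu}{|\nu|}=\int_{\R}|h(\nu)|^2\,d\nu=\|h\|_{L^2(\cO_1)}^{\,\,2},
\]
the point being that the weight $|w_2(\nu)|^2=|\nu|$ exactly cancels the Haar density $\frac{1}{|\nu|}$, leaving the plain Lebesgue integral. In particular $w_2h\in L^2(\R^*)$ whenever $h\in L^2(\cO_1)$, so $M$ is a well-defined isometry.

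For surjectivity I would exhibit the inverse explicitly. Given $f\in L^2(\R^*)$, set $h(\nu)=|\nu|^{-1/2}f(\nu)$; the same cancellation gives $\int_\R|h(\nu)|^2\,d\nu=\int_{\R^*}|f(\nu)|^2\,\frac{d\nu}{|\nu|}<\infty$, so $h\in L^2(\cO_1)$ and $w_2h=f$. Hence $M$ is onto, and therefore a unitary. The ``in particular'' clause is then automatic: a unitary carries any orthonormal basis to an orthonormal basis, so since $\{g_j:j\in J\}$ is an orthonormal basis of $L^2(\cO_1)$, its image $\{w_2g_j:j\in J\}=\{Mg_j:j\in J\}$ is an orthonormal basis of $L^2(\R^*)$. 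I anticipate no genuine obstacle here; the only place demanding care is keeping straight the two distinct measures on the common underlying set $\cO_1=\R^*$.
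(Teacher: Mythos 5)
Your proof is correct and is precisely the elementary verification the paper intends (the paper states this lemma without proof, calling it elementary): the weight $|w_2(\nu)|^2=|\nu|$ cancels the Haar density $\tfrac{1}{|\nu|}$, the inverse is $f\mapsto |\cdot|^{-1/2}f$, and a unitary maps orthonormal bases to orthonormal bases. Your care in distinguishing the two measures on the common underlying set is exactly the right point to emphasize.
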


Identifying $H_{(1,0)}$ with $G_1$,
Theorem \ref{decomp_GLn_1}, with $n=1$, says that
$L^2(H_{(1,0)})=\sum_{j\in J}^{\oplus}\cK_{g_j}$.
Therefore, 
$L^2\big(K,L^2(H_{(1,0)})\big)=\textstyle
\sum_{j\in J}^{\oplus}L^2(K,\cK_{g_j})$. 
Applying $W_1^{-1}$, now considered as a 
unitary map of $L^2\big(K,L^2(H_{(1,0)})\big)$ 
onto $L^2\big(\gl2\big)$, we get a decomposition
of $L^2\big(\gl2\big)$.
\begin{prop}\label{decomposition_gl2}
Let $\{g_j:j\in J\}
\subseteq\cD$ be an
orthonormal basis in $L^2(\cO_1)$. Then each
$\cH_{g_j}$ is a closed $\pi^{(1,0)}$-invariant
subspace of $L^2\big(\gl2\big)$ and the
restriction of $\pi^{(1,0)}$ to $\cH_{g_j}$
is equivalent to $\sigma$. Moreover,
$L^2\big(\gl2\big)=\sum_{j\in J}^{\oplus}
\cH_{g_j}$.
\end{prop}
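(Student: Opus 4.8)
The plan is to obtain all three assertions by transporting structure that is already in place, with essentially no new computation. That each $\cH_{g_j}$ is a closed $\pi^{(1,0)}$-invariant subspace and that $\pi^{(1,0)}$ restricted to $\cH_{g_j}$ is equivalent to $\sigma$ is immediate from the earlier proposition showing that $\cH_g$ is $\pi^{(1,0)}$-invariant and that $W_1$ carries $\pi^{(1,0)}|_{\cH_g}$ to $\tilde\sigma$ on $L^2(K,\cK_g)$. Applying this with $g=g_j$ and composing with the unitary $V_{g_j}'$, which intertwines $\sigma$ with $\tilde\sigma$, yields $\pi^{(1,0)}|_{\cH_{g_j}}\sim\tilde\sigma\sim\sigma$. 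Hence the genuine content lies in the orthogonal decomposition $L^2\big(\gl2\big)=\sum_{j\in J}^{\oplus}\cH_{g_j}$.

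First I would verify that $\{C_{\pi^1}g_j:j\in J\}$ is an orthonormal basis of $L^2(\R^*)$. By Proposition \ref{C_pi_1} the Duflo-Moore operator satisfies $C_{\pi^1}g=w_2g$ with $w_2(\nu)=|\nu|^{1/2}$, and Lemma \ref{w_2_unitary_map} states that $h\mapsto w_2h$ is a unitary of $L^2(\cO_1)$ onto $L^2(\R^*)$. Since $\{g_j\}$ is an orthonormal basis of $L^2(\cO_1)$, its image $\{w_2g_j\}=\{C_{\pi^1}g_j\}$ is an orthonormal basis of $L^2(\R^*)=\cH_{\pi^1}$; in particular each $g_j$ lies in $\cD_{\pi^1}$ with $\|C_{\pi^1}g_j\|=1$, so $\cK_{g_j}=V_{g_j}L^2(\R^*)$ is defined. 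This is exactly the hypothesis needed for the Duflo-Moore decomposition recorded before Theorem \ref{Peter_Weyl_Gen}.

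Next, identifying $H_{(1,0)}$ with $G_1$, which is a separable [AR]-group whose unique square-integrable representation class is $[\pi^1]$, that decomposition gives $L^2(H_{(1,0)})=L^2_{\pi^1}(H_{(1,0)})=\sum_{j\in J}^{\oplus}\cK_{g_j}$. Passing to $\cK_{g_j}$-valued $L^2$ over $K$ preserves orthogonal direct sums, so $L^2\big(K,L^2(H_{(1,0)})\big)=\sum_{j\in J}^{\oplus}L^2(K,\cK_{g_j})$. Finally, applying the unitary $W_1^{-1}$ from Proposition \ref{W_inverse} term by term and recalling $\cH_{g_j}=W_1^{-1}L^2(K,\cK_{g_j})$ produces $L^2\big(\gl2\big)=\sum_{j\in J}^{\oplus}\cH_{g_j}$, completing the proof.

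The only step demanding genuine care, and the main (if mild) obstacle, is the orthonormal-basis verification, because it rests on keeping the two distinct measures straight: the $g_j$ are orthonormal for Lebesgue measure on $\cO_1$, whereas the basis fed into the Duflo-Moore theorem must be orthonormal in $L^2(\R^*)$ under multiplicative Haar measure, and it is precisely the weight $w_2=C_{\pi^1}$ that converts the one normalization into the other. Everything else is a formal transport of an orthogonal decomposition along the unitaries $W_1^{-1}$ and $V_{g_j}'$.
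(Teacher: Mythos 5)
Your proposal is correct, and its overall architecture --- decompose $L^2(H_{(1,0)})$ into the subspaces $\cK_{g_j}$, pass to $L^2\big(K,L^2(H_{(1,0)})\big)=\sum_{j\in J}^{\oplus}L^2(K,\cK_{g_j})$, and apply the unitary $W_1^{-1}$ of Proposition \ref{W_inverse} --- is exactly the paper's, as is the handling of invariance and equivalence via the proposition preceding the statement together with $V_{g_j}'$. The one place where you diverge is the justification of $L^2(H_{(1,0)})=\sum_{j\in J}^{\oplus}\cK_{g_j}$: the paper simply cites Theorem \ref{decomp_GLn_1} with $n=1$, whereas you derive it from the Duflo--Moore orthogonality relations (Theorem \ref{Du_Mo}(d)), Proposition \ref{C_pi_1}, and Lemma \ref{w_2_unitary_map}, checking that $\{C_{\pi^1}g_j\}=\{w_2g_j\}$ is an orthonormal basis of $L^2(\R^*)$ so that $L^2(G_1)=L^2_{\pi^1}(G_1)=\oplus_j\cK_{g_j}$. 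Your route is arguably the more careful one here: Theorem \ref{decomp_GLn_1} decomposes $L^2(G_1)$ into the ranges $\cL_{\eta_j}$ of the maps $U_{\eta_j}$, while $\cK_{g_j}=V_{g_j}L^2(\R^*)$ is the range of a coefficient map of $\pi^1$, and matching the two requires an extra (unstated) identification of $U_\eta$ with $V_g$ for suitably paired $\eta$ and $g$; your argument bypasses that entirely and also makes explicit why the normalization hypothesis $\int_{\R^*}\big||\nu|^{1/2}g_j(\nu)\big|^2d\mu_{\R^*}(\nu)=1$ needed for the earlier invariance proposition is automatic from $\|g_j\|_{L^2(\cO_1)}=1$. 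What the paper's citation buys is brevity and uniformity with the $n$-dimensional statement; what yours buys is a self-contained verification that the two measures on $\cO_1$ and $\R^*$ are reconciled precisely by the Duflo--Moore operator.
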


Recall from Proposition \ref{Intertwine}, if
$\eta\in L^2(\cO)$ satisfies $\|\eta\|_{_{L^2(\cO)}}
=1$, then there is an isometric linear map
$U_\eta:L^2\big(\gl2\big)\to L^2(G_2)$ that 
intertwines $\pi^{(1,0)}$ with $\lambda_{G_2}$.
The map $U_\eta$ is defined by
\[
U_\eta f[\uy,B]=\int_{\widehat{\R^2}}
\eta(\uomega)f\big(B\gamma(\uomega)\big)
e^{-2\pi i\uomega B^{-1}\uy}d\uomega,\,\,
\text{for all}\,\,[\uy,B]\in G_2, 
f\in L^2\big(\gl2\big).
\]
The steps we have taken to move from 
$L^2\big(K,L^2(\R^*)\big)$ to $L^2(G_2)$ are
summarized in the following diagram.
\[
\begin{tikzcd}
\left(L^2\big(K,L^2(\R^*)\big);\sigma\right)
\arrow[d,"V_g'"]\\
\left(L^2(K,\cK_g);\tilde{\sigma}\right)
\arrow[d,"W_1^{-1}"]\\
\left(L^2\big(\gl2\big);\pi^{(1,0)}\right)
\arrow[d,"U_\eta"]\\
\left(L^2(G_2);\lambda_{G_2}\right)
\end{tikzcd}
\]
The vertical maps are linear isometries from the
upper Hilbert space into the lower Hilbert space
intertwining the corresponding unitary
representations. Thus, $\Phi_{\eta,g}=U_\eta\circ W_1^{-1}
\circ V_g'$ is a linear isometry of 
$L^2\big(K,L^2(\R^*)\big)$ into $L^2(G_2)$ that
intertwines $\sigma$ with $\lambda_{G_2}$.

Let $F\in L^2\big(K,L^2(\R^*)\big)$. For 
$[\ux,A]\in G_2$,
\begin{equation}\label{Phi_1}
\begin{split}
\Phi_{\eta,g}F[\ux,A] & =
\int_{\widehat{\R^2}}
\eta(\uomega)\big(W_1^{-1}V_g'F\big)\big)\big(A\gamma(\uomega)\big)
e^{-2\pi i\uomega A^{-1}\ux}d\uomega\\
& = |\det(A)|\int_{\widehat{\R^2}}
\eta(\uomega A)\big(W_1^{-1}V_g'F\big)\big)\big(A\gamma(\uomega A)\big)
e^{-2\pi i\uomega\ux}d\uomega.
\end{split}
\end{equation}
Observe that $M_{A\gamma(\uomega A)}=
\gamma(\uomega)$ and 
$C_{A\gamma(\uomega A)}=
\gamma(\uomega)^{-1}A\gamma(\uomega A)$. So
\[
\det(C_{A\gamma(\uomega A)})=
\det\big(\gamma(\uomega)^{-1}A\gamma(\uomega A)\big)
=\frac{\|\uomega\|^2\det(A)}{\|\uomega A\|^2}
\]
Thus,
\[
\begin{split}
\big(W_1^{-1}V_g'F\big)\big)&\big(A\gamma(\uomega A)\big)  =
\textstyle\frac{\|\uomega A\|}{\|\uomega\|
\cdot|\det(A)|^{1/2}}V_g(F[\u0,\gamma(\uomega)])
\big(\gamma(\uomega)^{-1}A\gamma(\uomega A)
\big)\\
& =\textstyle\frac{\|\uomega A\|}{\|\uomega\|
\cdot|\det(A)|^{1/2}}\langle F[\u0,\gamma(\uomega)],
\pi^1\big(\gamma(\uomega)^{-1}A\gamma(\uomega A)
\big)g\rangle_{_{L^2(\R^*)}}\\
& = {\textstyle\frac{\|\uomega A\|}{\|\uomega\|
\cdot|\det(A)|^{1/2}}}\int_{\R^*}
(F[\u0,\gamma(\uomega)])(\nu)\overline{  
\pi^1\big(\gamma(\uomega)^{-1}A\gamma(\uomega A)
\big)g(\nu)}\,d\mu_{\R^*}(\nu).
\end{split}
\] 
Inserting this into \eqref{Phi_1} gives\\
\[
\begin{split}
\Phi_{\eta,g} & F[\ux,A] =\\
& \int_{\widehat{\R^2}}\int_{\R^*}
{\textstyle\frac{|\det(A)|^{1/2}\|\uomega A\|}{\|\uomega\|}}
\eta(\uomega A)
e^{-2\pi i\uomega\ux}
(F[\u0,\gamma(\uomega)])(\nu)\overline{ 
\pi^1\big(\gamma(\uomega)^{-1}A\gamma(\uomega A)
\big)g(\nu)}\,d\mu_{\R^*}(\nu)\,d\uomega.
\end{split}
\]

We will compare the expression for $\Phi_{\eta,g}F$
with a coefficient function of the irreducible
representation $\sigma$. 

If  
$E\in L^2\big(K,L^2(\R^*)\big)$ is fixed, then,
for any $F\in L^2\big(K,L^2(\R^*)\big)$, $V_EF$
is the continuous function on $G_2$ defined
by $V_EF[\ux,A]=
\langle F,\sigma[\ux,A]E\rangle_{_{L^2(K,L^2(\R^*))}}$, 
for all $[\ux,A]\in G_2$. Recall that the Haar integral over $K$ can be expressed using the parametrization
$\uomega\to\gamma(\uomega)$ by $\cO$, which is
co-null in $\widehat{\R^2}$. Then, for $[\ux,A]
\in G_2$,
\begin{equation}\label{V_EF}
\begin{split}
V_EF[\ux,A] & =
\langle F,\sigma[\ux,A]E\rangle_{_{L^2(K,L^2(\R^*))}}
\\
& =\int_{\widehat{\R^2}}\int_{\R^*} 
(F[\u0,\gamma(\uomega)])(\nu)\overline{\sigma[\ux,A]
(E[\u0,\gamma(\uomega)])(\nu)}\,d\mu_{\R^*}(\nu)
\frac{d\uomega}{\|\uomega\|^2}.
\end{split}
\end{equation}
Note that 
\[
\overline{\sigma[\ux,A]
(E[\u0,\gamma(\uomega)])(\nu)}=
\frac{|\det(A)|^{1/2}\|\uomega\|}{\|\uomega A\|}
e^{-2\pi i\uomega\ux}\,
\overline{\pi^1\big(\gamma(\uomega)^{-1}
A\gamma(\uomega A)\big)E[\u0,\gamma(\uomega A)](\nu)
}.
\]
 If we select $E$ as a function built from
$\eta$ and $g$, then we can make the expression for
$V_EF$ coincide with that for $\Phi_{\eta,g}$. Define
$E$ as follows: For $[\u0,L]\in K$ and $\nu\in\R^*$,
\[
E[\u0,L](\nu)=
\textstyle\frac{\overline{\eta}((1,0)L^{-1})}{|\det(L)|}\,g(\nu).
\]
If $L=\gamma(\uomega A)$, then 
$E[\u0,\gamma(\uomega A)](\nu)=
\frac{\overline{\eta}(\uomega A)}{|\det(\gamma(\uomega A))|}\,g(\nu)=\|\uomega A\|^2
\overline{\eta}(\uomega A)\,g(\nu)$. Thus  
\[
\overline{\sigma[\ux,A] 
(E[\u0,\gamma(\uomega)])(\nu)}\frac{1}{\|\uomega\|^2}=
\frac{|\det(A)|^{1/2}\|\uomega A\|}{\|\uomega\|}
\eta(\uomega A)\,e^{-2\pi i\uomega\ux}\,
\overline{\pi^1\big(\gamma(\uomega)^{-1}
A\gamma(\uomega A)\big)g(\nu).
}
\]
Substituting into \eqref{V_EF}, there is a perfect
match with the expression for $\Phi_{\eta,g}F$. Thus, we have the following theorem.
\begin{theorem}\label{V_E_isometry}
Let $g\in L^2(\R^*)$ satisfy $\int_{\R^*}\big|
|\nu|^{1/2}g(\nu)\big|^2d\mu_{\R^*}(\nu)=1$ and let
$\eta\in L^2(\widehat{\R^2})$ satisfy
$\|\eta\|_{_{L^2(\widehat{\R^2})}}=1$.
Let $E\in  L^2\big(K,L^2(\R^*)\big)$ be defined
as 
\[
E[\u0,L](\nu)=
\frac{\overline{\eta}((1,0)L^{-1})}{|\det(L)|}\,g(\nu),
\text{ for each } [\u0,L]\in K \text{ and }
\nu\in\R^*.
\]
 Define $V_EF[\ux,A]=
\langle F,
\sigma[\ux,A]E\rangle_{_{L^2(K,L^2(\R^*))}}$, for
$[\ux,A]\in G_2$ and 
$F \in L^2\big(K,L^2(\R^*)\big)$. Then $V_E$ is
a linear isometry of $L^2\big(K,L^2(\R^*)\big)$
into $L^2(G_2)$ that intertwines $\sigma$ with
$\lambda_{G_2}$. In particular, $\sigma$ is a square-integrable representation of $G_2$.
\end{theorem}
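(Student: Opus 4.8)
The plan is to observe that the theorem is the payoff of all the preceding construction, and to reduce it to a single identification together with a short square-integrability argument. In the text just above, the composite map $\Phi_{\eta,g}=U_\eta\circ W_1^{-1}\circ V_g'$ was already shown to be a linear isometry of $L^2\big(K,L^2(\R^*)\big)$ into $L^2(G_2)$ intertwining $\sigma$ with $\lambda_{G_2}$: each vertical arrow in the diagram is an intertwining isometry ($V_g'$ carries $\sigma$ to $\tilde\sigma$ fiberwise via Corollary \ref{pi_1_V_g_isometry}, $W_1^{-1}$ carries $\tilde\sigma$ to $\pi^{(1,0)}$ via Proposition \ref{W_inverse} and the proposition immediately following it, and $U_\eta$ carries $\pi^{(1,0)}$ to $\lambda_{G_2}$ via Proposition \ref{Intertwine}). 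Consequently the two substantive tasks that remain are: (a) to prove that the coefficient map $V_E$, for the particular $E$ named in the statement, coincides with $\Phi_{\eta,g}$; and (b) to deduce square-integrability of $\sigma$.

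For task (a) I would compute $V_EF[\ux,A]$ directly from \eqref{V_EF}, writing the Haar integral over $K$ in the parametrization $\uomega\mapsto\gamma(\uomega)$ so that the measure is $d\uomega/\|\uomega\|^2$, and then substitute the explicit expression for $\overline{\sigma[\ux,A]\big(E[\u0,\gamma(\uomega)]\big)(\nu)}$. The decisive input is the evaluation
\[
E[\u0,\gamma(\uomega A)](\nu)=\|\uomega A\|^2\,\overline{\eta}(\uomega A)\,g(\nu),
\]
which uses $(1,0)\gamma(\uomega A)^{-1}=\uomega A$ and the determinant identity $|\det(\gamma(\uomega A))|=\|\uomega A\|^{-2}$ from Proposition \ref{identities}(e). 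The hard part will be the bookkeeping of the normalizing weights: the factor $\|\uomega A\|^2$ coming from $E$, together with the $\|\uomega\|^{-2}$ from the Haar measure on $K$ and the factor $\|\uomega\|/\|\uomega A\|$ inside the $\sigma$-weight $\tfrac{|\det(A)|^{1/2}\|\uomega\|}{\|\uomega A\|}$, must collapse to the single factor $\|\uomega A\|/\|\uomega\|$. When they do, the integrand reduces exactly to $\tfrac{|\det(A)|^{1/2}\|\uomega A\|}{\|\uomega\|}\eta(\uomega A)e^{-2\pi i\uomega\ux}(F[\u0,\gamma(\uomega)])(\nu)\overline{\pi^1\big(\gamma(\uomega)^{-1}A\gamma(\uomega A)\big)g(\nu)}$, i.e. the integrand obtained for $\Phi_{\eta,g}F[\ux,A]$ in the display preceding the statement. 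I emphasize that $E$ has been reverse-engineered precisely so that these weights cancel, so once the identities of Proposition \ref{identities} are in hand this is a routine verification yielding $V_E=\Phi_{\eta,g}$; the isometry and intertwining claims then follow immediately.

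Finally, for task (b) I would argue square-integrability straight from the definition. The map $V_E$ of the statement is exactly the coefficient map $V_\eta$ of Section 2 with $\eta:=E$ and $\pi:=\sigma$, so taking $F=E$ gives $V_EE[\ux,A]=\langle E,\sigma[\ux,A]E\rangle$. Since $V_E$ is an isometry into $L^2(G_2)$ we have $V_EE\in L^2(G_2)$, and $E\neq0$ because $g$ and $\eta$ are unit vectors. As $\sigma$ is irreducible (the proposition established just above), the existence of this nonzero $\eta=E$ with $V_EE\in L^2(G_2)$ is precisely the definition of square-integrability. Equivalently, $V_E$ exhibits $\sigma$ as equivalent to a nonzero subrepresentation of $\lambda_{G_2}$, and an irreducible subrepresentation of the regular representation is square-integrable, as was used in the proof of Theorem \ref{mult_G_n}.
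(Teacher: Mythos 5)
Your proposal is correct and follows essentially the same route as the paper: the paper likewise builds the intertwining isometry $\Phi_{\eta,g}=U_\eta\circ W_1^{-1}\circ V_g'$ from the chain of propositions, verifies by the same weight bookkeeping (including the evaluation $E[\u0,\gamma(\uomega A)](\nu)=\|\uomega A\|^2\,\overline{\eta}(\uomega A)g(\nu)$ and the cancellation down to $\|\uomega A\|/\|\uomega\|$) that $V_EF=\Phi_{\eta,g}F$, and concludes square-integrability from irreducibility of $\sigma$ together with $V_EE\in L^2(G_2)$. No gaps.
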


Combining Theorem \ref{decomp_GLn_1} with Proposition
\ref{decomposition_gl2} will now provide a
decomposition of the regular representation,
$\lambda_{G_2}$ on $L^2(G_2)$, into infinitely many
copies of $\sigma$. Fix an orthonormal
basis $\{g_j:j\in J\}$
of $L^2(\cO_1)$ consisting of functions
in $\cD$ as in Proposition \ref{decomposition_gl2}.
For each $j\in J$, $\cK_{g_j}=V_{g_j}L^2(\R^*)$
and $\cH_{g_j}=W_1^{-1}L^2(K,\cK_{g_j})$. Then
$\cH_{g_j}$ is a closed $\pi^{(1,0)}$-invariant
subspace of $L^2\big(\gl2\big)$ and the
restriction of $\pi^{(1,0)}$ to $\cH_{g_j}$ is
equivalent to $\sigma$ via $W_1^{-1}\circ V_g'$.
Moreover, Proposition \ref{decomposition_gl2} says
$L^2\big(\gl2\big)=\sum_{j\in J}^{\oplus}
\cH_{g_j}$.

On the other hand, $\cO=\widehat{\R^2}\setminus 
\{(0,0)\}$ is the orbit of $(1,0)$ in 
$\widehat{\R^2}$ under the action of $\gl2$. It is
equipped with the Lebesgue measure of 
$\widehat{\R^2}$. Fix an orthonormal basis 
$\{\eta_i:i\in I\}$ of
$L^2(\widehat{\R^2})=L^2(\cO)$. At this point, we pause to formulate an analog of Lemma \ref{w_2_unitary_map}. There is a useful conjugate linear
map $W_\gamma$ from $L^2(\cO)$ to $L^2(K)$ provided
by the map $\gamma$. We observe that $\gamma^{-1}:
K_0\to\cO$ is such that $\gamma^{-1}(L)=(1,0)L^{-1}$,
for all $L\in K_0$. For $\xi\in L^2(\cO)$, let
$W_\gamma\xi[\u0,L]=
|\det(L)|^{-1/2}\overline{\xi}\big(\gamma^{-1}(L)\big)$, 
for a.e. $L\in K$.
\begin{lemma}\label{W_gamma_unitary}
The map $\xi\to W_\gamma\xi$ is a conjugate linear
isometry of 
$L^2(\cO)$ onto $L^2(K)$. In particular, 
$\{W_\gamma\eta_i:i\in I\}$ is an orthonormal basis of
$L^2(K)$.
\end{lemma}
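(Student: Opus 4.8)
The plan is to verify directly the two nontrivial assertions—that $W_\gamma$ preserves norm and is onto—since conjugate linearity is immediate from the complex conjugate $\overline{\xi}$ appearing in the definition of $W_\gamma$.

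First I would compute $\|W_\gamma\xi\|_{L^2(K)}$ by expressing the Haar integral over $K$ through the parametrization $L=\gamma(\uomega)$ supplied by \eqref{Haar_K}. Along this parametrization, $\gamma^{-1}(\gamma(\uomega))=(1,0)\gamma(\uomega)^{-1}=\uomega$, and by Proposition \ref{identities}(e) we have $|\det(\gamma(\uomega))|^{-1/2}=\|\uomega\|$, so that $W_\gamma\xi[\u0,\gamma(\uomega)]=\|\uomega\|\,\overline{\xi}(\uomega)$. Substituting into
\[
\|W_\gamma\xi\|_{L^2(K)}^{\,2}=\int_{\widehat{\R^2}}\big|W_\gamma\xi[\u0,\gamma(\uomega)]\big|^2\,\frac{d\uomega}{\|\uomega\|^2},
\]
the factor $\|\uomega\|^2$ coming from $|W_\gamma\xi[\u0,\gamma(\uomega)]|^2$ cancels against the $\|\uomega\|^{-2}$ in the measure, leaving $\int_{\widehat{\R^2}}|\xi(\uomega)|^2\,d\uomega=\|\xi\|_{L^2(\cO)}^{\,2}$. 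Hence $W_\gamma$ is an isometry.

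For surjectivity I would exhibit the inverse explicitly: given $\psi\in L^2(K)$, set $\xi(\uomega)=\|\uomega\|^{-1}\,\overline{\psi[\u0,\gamma(\uomega)]}$ for a.e. $\uomega\in\cO$. The same cancellation shows $\|\xi\|_{L^2(\cO)}=\|\psi\|_{L^2(K)}$, so $\xi\in L^2(\cO)$, and reading the computation of $W_\gamma\xi[\u0,\gamma(\uomega)]$ backwards gives $W_\gamma\xi=\psi$. Thus $W_\gamma$ maps $L^2(\cO)$ onto $L^2(K)$.

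Finally, for the statement about the orthonormal basis, the point to record is that a conjugate linear isometry $W$ satisfies $\langle W\xi,W\zeta\rangle=\langle\zeta,\xi\rangle$; this follows by applying the polarization identity to the relation $\|W(\cdot)\|=\|\cdot\|$, keeping in mind that conjugate linearity reverses the order of the inner product. Consequently $\langle W_\gamma\eta_i,W_\gamma\eta_j\rangle=\langle\eta_j,\eta_i\rangle=\delta_{ij}$, so $\{W_\gamma\eta_i:i\in I\}$ is orthonormal, and since $W_\gamma$ is onto it is in fact an orthonormal basis of $L^2(K)$. No step here is a genuine obstacle; the only care needed is bookkeeping the determinant factor against the Haar measure and remembering the order reversal in the inner product.
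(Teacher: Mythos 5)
Your proposal is correct and follows essentially the same route as the paper: the isometry is verified via the parametrized Haar integral \eqref{Haar_K} with the cancellation of $\|\uomega\|^2$ against the measure, and surjectivity is obtained from the same explicit inverse $W_\gamma^{-1}\psi(\uomega)=\|\uomega\|^{-1}\overline{\psi}[\u0,\gamma(\uomega)]$ that the paper records. The closing remark on polarization for a conjugate linear isometry is a detail the paper leaves implicit, and it is handled correctly.
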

\begin{proof}
We use the expression for Haar integration on $K$ given
in \eqref{Haar_K}. Then, for $\xi\in L^2(\cO)$,
\[
\begin{split}
\int_K|W_\gamma\xi|^2d\mu_K & =\int_{\widehat{\R^2}}
\left|W_\gamma\xi[\u0,\gamma(\uomega)]\right|^2
\frac{d\uomega}{\|\uomega\|^2}=\int_{\widehat{\R^2}}
\left||\det\big(\gamma(\uomega)\big)|^{-1/2}
\overline{\xi}\big((1,0)\gamma(\uomega)^{-1}\big)\right|^2
\frac{d\uomega}{\|\uomega\|^2}\\
&= \int_{\widehat{\R^2}}
\|\uomega\|^2\left|
\overline{\xi}\big((1,0)\gamma(\uomega)^{-1}\big)\right|^2
\frac{d\uomega}{\|\uomega\|^2}=
\int_{\widehat{\R^2}}|\xi(\uomega)|^2d\uomega=
\|\xi\|_{L^2(\cO)}^{\,\,2}.
\end{split}
\]
Thus, $W_\gamma\xi\in L^2(K)$ and 
$\|W_\gamma\xi\|_{L^2(K)}^{\,\,2}=
\|\xi\|_{L^2(\cO)}^{\,\,2}$. Clearly, $W_\gamma$ is
additive and $W_\gamma\alpha\xi=
\overline{\alpha}W_\gamma\xi$ for all $\alpha\in\C$ and
$\xi\in L^2(\cO)$. Moreover, a calculation similar to 
that above shows that $W_\gamma$ maps $L^2(\cO)$ 
onto $L^2(K)$ and
$W_\gamma^{-1}$ is given by
$W_\gamma^{-1}f(\uomega)=
\|\uomega\|^{-1}\overline{f}[\u0,\gamma(\uomega)]$, for
a.e. $\uomega\in\cO$ and $f\in L^2(K)$.
\end{proof}

For each $i\in I$,
$U_{\eta_i}:L^2(\gl2)\to L^2(G_2)$ is given by
\[
U_{\eta_i}f[\uy,B]=\int_{\widehat{\R^2}}
\eta_i(\uomega)f\big(B\gamma(\uomega)\big)
e^{-2\pi i\uomega B^{-1}\uy}\,d\uomega,
\]
for $[\uy,B]\in G_2$ and $f\in L^2\big(\gl2\big)$.
By Theorem \ref{decomp_GLn_1}, each $U_{\eta_i}
L^2\big(\gl2\big)$ is a $\lambda_{G_2}$-invariant
closed subspace of $L^2(G_2)$ and $U_{\eta_i}$
intertwines $\pi^{(1,0)}$ with the restriction
of $\lambda_{G_2}$ to $U_{\eta_i}
L^2\big(\gl2\big)$ and $L^2(G_2)=\sum_{i\in I}^{\oplus}
U_{\eta_i}L^2\big(\gl2\big)$.

For each $(i,j)\in I\times J$, form $E_{i,j}
\in L^2\big(K,L^2(\R^*)\big)$ by
\begin{equation}\label{E_ij}
E_{i,j}[\u0,L](\nu)= 
\textstyle\frac{\overline{\eta_i}((1,0)L^{-1})}{|\det(L)|}\,g_j(\nu),
\end{equation}
for $\nu\in\R^*$ and $[\u0,L]\in K$. The orthogonal 
decompositions just recalled imply the following
theorem. 
\begin{theorem}\label{V_E_ij}  
Let $\sigma$ be the representation
of $G_2$ on the Hilbert space 
$L^2\big(K,L^2(\R^*)\big)$ 
given in \eqref{sigma_pointwise_formula}.
Let $\{\eta_i:i\in I\}$ be an orthonormal basis of
$L^2(\cO)$ and let $\{g_j:j\in J\}$ be
an orthonormal basis of $L^2(\cO_1)$ consisting
of functions in $L^2(\R^*)$. For each $(i,j)
\in I\times J$, form $E_{i,j}
\in L^2\big(K,L^2(\R^*)\big)$ as in \eqref{E_ij}
and let $\cM_{i,j}=
V_{E_{i,j}}L^2\big(K,L^2(\R^*)\big)$. Then
each $\cM_{i,j}$ is a closed $\lambda_{G_2}$-invariant
subspace of $L^2(G_2)$ and $V_{E_{i,j}}$ is an
isometry that intertwines $\sigma$ with the
restriction of $\lambda_{G_2}$ to $\cM_{i,j}$.
Moreover
$L^2(G_2)=\sum_{(i,j)\in I\times J}^{\oplus}\cM_{i,j}.$
\end{theorem}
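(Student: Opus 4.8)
The plan is to reduce the statement entirely to results already established: the orthogonal decomposition of $L^2\big(\gl2\big)$ in Proposition \ref{decomposition_gl2}, the orthogonal decomposition of $L^2(G_2)$ in Theorem \ref{decomp_GLn_1} (with $n=2$), and the isometry property of $V_{E_{i,j}}$ from Theorem \ref{V_E_isometry}, all linked by the factorization of $\Phi_{\eta,g}$. First I would dispose of the pointwise assertions about each $\cM_{i,j}$. To invoke Theorem \ref{V_E_isometry} for the pair $(\eta_i,g_j)$ I only need the two normalizations. Since $\{\eta_i:i\in I\}$ is orthonormal in $L^2(\cO)=L^2(\widehat{\R^2})$, we have $\|\eta_i\|_{L^2(\widehat{\R^2})}=1$; and since $\{g_j:j\in J\}$ is orthonormal in $L^2(\cO_1)$ with each $g_j\in\cD$, Lemma \ref{w_2_unitary_map} gives $\int_{\R^*}\big||\nu|^{1/2}g_j(\nu)\big|^2 d\mu_{\R^*}(\nu)=\|g_j\|_{L^2(\cO_1)}^{\,\,2}=1$. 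Theorem \ref{V_E_isometry} then yields at once that $V_{E_{i,j}}$ is a linear isometry intertwining $\sigma$ with $\lambda_{G_2}$, so its range $\cM_{i,j}$ is a closed $\lambda_{G_2}$-invariant subspace.

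For the direct sum decomposition I would use the factorization recorded just before Theorem \ref{V_E_isometry}, namely that $\Phi_{\eta,g}=U_\eta\circ W_1^{-1}\circ V_g'$ coincides with $V_E$ for $E$ assembled from $\eta$ and $g$. Specializing to $\eta=\eta_i$ and $g=g_j$ gives $V_{E_{i,j}}=U_{\eta_i}\circ W_1^{-1}\circ V_{g_j}'$. I would then track ranges along this chain: $V_{g_j}'$ maps $L^2\big(K,L^2(\R^*)\big)$ isometrically onto $L^2(K,\cK_{g_j})$; $W_1^{-1}$ carries $L^2(K,\cK_{g_j})$ onto $\cH_{g_j}$ by the very definition $\cH_{g_j}=W_1^{-1}L^2(K,\cK_{g_j})$; and $U_{\eta_i}$ then sends $\cH_{g_j}$ onto $U_{\eta_i}\big(\cH_{g_j}\big)$. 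Hence $\cM_{i,j}=U_{\eta_i}\big(\cH_{g_j}\big)$, which is the identification that makes the assembly possible.

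With this in hand the conclusion follows by combining the two decompositions. Proposition \ref{decomposition_gl2} furnishes the internal orthogonal sum $L^2\big(\gl2\big)=\sum_{j\in J}^{\oplus}\cH_{g_j}$. Because $U_{\eta_i}$ is a linear isometry it preserves inner products, hence orthogonality, and therefore carries this decomposition onto an internal orthogonal sum of its image, giving $U_{\eta_i}L^2\big(\gl2\big)=\sum_{j\in J}^{\oplus}\cM_{i,j}$. Theorem \ref{decomp_GLn_1} supplies $L^2(G_2)=\sum_{i\in I}^{\oplus}U_{\eta_i}L^2\big(\gl2\big)$ with the summands mutually orthogonal. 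Substituting the former display into the latter produces $L^2(G_2)=\sum_{(i,j)\in I\times J}^{\oplus}\cM_{i,j}$.

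I do not anticipate a genuine obstacle, since the substantive analytic work is already done; the one step warranting care is the simultaneous orthogonality of the doubly indexed family. For $i\neq i'$ the relation $\cM_{i,j}\perp\cM_{i',j'}$ comes from the inclusions $\cM_{i,j}\subseteq U_{\eta_i}L^2\big(\gl2\big)$ and the mutual orthogonality of the spaces $U_{\eta_i}L^2\big(\gl2\big)$ in Theorem \ref{decomp_GLn_1}, whereas for fixed $i$ and $j\neq j'$ it comes from the orthogonality of the $\cH_{g_j}$ transported by the isometry $U_{\eta_i}$. I would write out explicitly that these two sources together give pairwise orthogonality of the whole family and that the closed span of all $\cM_{i,j}$ exhausts $L^2(G_2)$, which is the only bookkeeping that the argument really requires.
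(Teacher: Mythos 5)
Your proposal is correct and follows essentially the same route as the paper: the paper states the theorem as an immediate consequence of the two orthogonal decompositions (Proposition \ref{decomposition_gl2} and Theorem \ref{decomp_GLn_1}) linked by the identity $V_{E_{i,j}}=\Phi_{\eta_i,g_j}=U_{\eta_i}\circ W_1^{-1}\circ V_{g_j}'$ established just before Theorem \ref{V_E_isometry}. Your explicit verification of the normalizations via Lemma \ref{w_2_unitary_map} and your tracking of ranges along the chain simply spell out the bookkeeping the paper leaves implicit.
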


\section{The Plancherel Formula for $G_2$} 
For certain classes of locally compact groups, 
an abstract Plancherel Theorem has been established.
For example, Theorem 18.8.2 of \cite{Dix} implies
that,
if $G$ is a Type I, separable,
unimodular group, then there exists a unique measure 
$\mu_{\widehat{G}}$ on $\widehat{G}$ such that,
for any $f\in L^1(G)\cap L^2(G)$, $\pi(f)$
is a Hilbert–-Schmidt operator on $\cH_\pi$, for 
$\mu_{\widehat{G}}$-a.e. $\pi\in\widehat{G}$, and
\begin{equation}\label{Plancherel_unimodular}
\|f\|_{L^2(G)}^{\,\,\,2}=
\int_{\widehat{G}}\|\pi(f)\|_{\rm HS}^{\,\,2}
\,d\mu_{\widehat{G}}(\pi).
\end{equation}
 There are generalizations of
\eqref{Plancherel_unimodular} to appropriate
classes of nonunimodular groups in \cite{KL} and
\cite{DM} (see \cite{Fuh} for an organized treatment
and some extensions). The group $G_2$ satisfies
the hypotheses for the results in both \cite{KL} and
\cite{DM}. Using Theorem \ref{V_E_ij}, the
Plancherel formula for $G_2$ is quite concrete.
The method below is a special case of Theorem 2.34
in \cite{Fuh}.

We fix $\{\eta_i:i\in I\}$, an orthonormal basis of
$L^2(\cO)$, and $\{g_j:j\in J\}$,
an orthonormal basis of $L^2(\cO_1)$ consisting
of functions in $L^2(\R^*)$ as in Theorem \ref{V_E_ij}.
By Lemma \ref{w_2_unitary_map}, $\{w_2g_j:j\in J\}$
is an orthonormal basis of $L^2(\R^*)$. Likewise, by 
Lemma \ref{W_gamma_unitary}, $\{W_\gamma\eta_i:i \in I\}$
is an orthonormal basis of $L^2(K)$. Since we can
view $L^2\big(K,L^2(\R^*)\big)$ as $L^2(K\times\R^*)$,
where $K\times\R^*$ is given the product of the
Haar measures, and $L^2(K\times\R^*)$ can be identified
with $L^2(K)\otimes L^2(\R^*)$ (see Example 2.6.11 of
\cite{KR}), we can construct an orthonormal basis of
$L^2\big(K,L^2(\R^*)\big)$ from the orthonormal
basis $\{(W_\gamma\eta_i)\otimes(w_2g_j):(i,j)\in 
I\times J\}$ of $L^2(K)\otimes L^2(\R^*)$.
For each 
$(i,j)\in I\times J$, define $F_{i,j}\in 
L^2\big(K,L^2(\R^*)\big)$ by
$\big(F_{i,j}[\u0,L]\big)(\nu)=
\big(W_\gamma\eta_i[\u0,L]\big)\big((w_2g_j)(\nu)\big)$,
for a.e. $\nu\in \R^*$ and $[\u0,L]\in K$. That is
$\big(F_{i,j}[\u0,L]\big)(\nu)=
\frac{\overline{\eta_i}\big((1,0)L^{-1}\big)}{|\det(L)|^{-1/2}}|\nu|^{1/2}g_j(\nu)$, for a.e. $[\u0,L]$ and 
$\nu$. Then $\{F_{i,j}:(i,j)\in I\times J\}$ is an
orthonormal basis of $L^2\big(K,L^2(\R^*)\big)$. Note
the close relationship with the $E_{i,j}$ as defined
in \eqref{E_ij}. Define the positive unbounded operator
$T$ on $L^2\big(K,L^2(\R^*)\big)$ by, for 
$F\in L^2\big(K,L^2(\R^*)\big)$, 
$\big((TF)[\u0,L]\big)(\nu)=|\det(L)\nu|^{-1/2}
\big(F[\u0,L]\big)(\nu)$, for
a.e. $[\u0,L]$ and $\nu$. Then $F_{i,j}$ is in the
domain of $T$ and
$E_{i,j}=TF_{i,j}$, for all $(i,j)\in I\times J$.

Since $\{F_{i,j}:(i,j)\in I\times J\}$ is an
orthonormal basis of $L^2\big(K,L^2(\R^*)\big)$,
Theorem \ref{V_E_ij} implies that
$\left\{
V_{E_{i,j}}F_{i',j'}:(i,j),(i',j')\in I\times J
\right\}$
is an orthonormal basis of $L^2(G_2)$. Therefore,
for any $f\in L^2(G_2)$,
\begin{equation}\label{double_sum}
\|f\|_{L^2(G_2)}^{\,\,2}=\sum_{(i,j)\in I\times J}\,\,
\sum_{(i',j')\in I\times J}\left|\langle f,
V_{E_{i,j}}F_{i',j'}\rangle_{L^2(G_2)}\right|^2.
\end{equation}
But, if $f\in L^1(G_2)\cap L^2(G_2)$, then we can
further analyze each of the inner products in the sum in 
\eqref{double_sum}. For $(i,j),(i',j')\in I\times J$,
\begin{equation*}
\begin{split}
\langle f,
V_{E_{i,j}}F_{i',j'}\rangle_{L^2(G_2)} & =
\int_{G_2}f[\ux,A]\overline{\langle F_{i',j'},
\sigma[\ux,A]E_{i,j}\rangle_{L^2(K,L^2(\R^*))}}
\,d\mu_{G_2}[\ux,A]\\
& = \int_{G_2}f[\ux,A]\langle
\sigma[\ux,A]E_{i,j},F_{i',j'}\rangle_{L^2(K,L^2(\R^*))}
\,d\mu_{G_2}[\ux,A]\\
& = \langle
\sigma(f)E_{i,j},F_{i',j'}\rangle_{L^2(K,L^2(\R^*))}=
\langle
\sigma(f)TF_{i,j},F_{i',j'}\rangle_{L^2(K,L^2(\R^*))}.
\end{split}
\end{equation*}
So, for $(i,j)\in I\times J$ fixed,
\begin{equation*}
\sum_{(i',j')\in I\times J}\left|\langle f,
V_{E_{i,j}}F_{i',j'}\rangle_{L^2(G_2)}\right|^2=
\sum_{(i',j')\in I\times J}\left|
\sigma(f)TF_{i,j},F_{i',j'}\rangle_{L^2(K,L^2(\R^*))}
\right|^2.
\end{equation*}
Since $\{F_{i',j'}:(i',j')\in I\times J\}$ is an
orthonormal basis of $L^2\big(K,L^2(\R^*)\big)$,
\begin{equation}\label{single_sum}
\sum_{(i',j')\in I\times J}\left|\langle f,
V_{E_{i,j}}F_{i',j'}\rangle_{L^2(G_2)}\right|^2=
\|\sigma(f)TF_{i,j}\|_{L^2(K,L^2(\R^*))}^{\,\,2}.
\end{equation}
Inserting \eqref{single_sum} into \eqref{double_sum}
gives $\|f\|_{L^2(G_2)}^{\,\,2}=
\sum_{(i,j)\in I\times J}
\|\sigma(f)TF_{i,j}\|_{L^2(K,L^2(\R^*))}^{\,\,2}=
\|\sigma(f)T\|_{\rm HS}^{\,2}$, when 
$f\in L^1(G_2)\cap L^2(G_2)$. This establishes a
concrete version of the abstract Plancherel Theorem
for the non-unimodular group $G_2$ and serves as an
example of the considerably more general Theorem 2.34
of \cite{Fuh}.
\begin{prop}\label{Plancherel_G_2}
Define the positive operator
$T$ on $L^2\big(K,L^2(\R^*)\big)$ by, 
\[
\big((TF)[\u0,L]\big)(\nu)=|\det(L)\nu|^{-1/2}
\big(F[\u0,L]\big)(\nu) \text{ for. a.e. }
([\u0,L],\nu)\in K\times\R^*,
\] 
for $F\in L^2\big(K,L^2(\R^*)\big)$. Let $\sigma$ be
the irreducible representation of $G_2$ given by \eqref{sigma_pointwise_formula}. If 
$f\in L^1(G_2)\cap L^2(G_2)$, then $\sigma(f)T$ extends
to a Hilbert-Schmidt operator on 
$L^2\big(K,L^2(\R^*)\big)$ and 
$\|f\|_{L^2(G_2)}=
\|\sigma(f)T\|_{\rm HS}$.
\end{prop}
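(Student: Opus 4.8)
The plan is to turn the orthogonal decomposition of Theorem \ref{V_E_ij} into a Parseval identity and then read off the right-hand side as a Hilbert--Schmidt norm. First I would manufacture a convenient orthonormal basis of the representation space $L^2\big(K,L^2(\R^*)\big)$. Identifying this space with $L^2(K)\otimes L^2(\R^*)$, I tensor the orthonormal basis $\{W_\gamma\eta_i:i\in I\}$ of $L^2(K)$ from Lemma \ref{W_gamma_unitary} with the orthonormal basis $\{w_2g_j:j\in J\}$ of $L^2(\R^*)$ from Lemma \ref{w_2_unitary_map} to obtain an orthonormal basis $\{F_{i,j}:(i,j)\in I\times J\}$. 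The decisive bookkeeping observation is that each $F_{i,j}$ lies in the domain of the unbounded operator $T$ and that $E_{i,j}=TF_{i,j}$, where $E_{i,j}$ is the vector of \eqref{E_ij} featuring in Theorem \ref{V_E_ij}. This follows by comparing the explicit formulas: multiplying $F_{i,j}$ by the symbol $|\det(L)\nu|^{-1/2}$ of $T$ cancels the weight $|\nu|^{1/2}$ coming from $w_2$ and converts the weight $|\det(L)|^{-1/2}$ coming from $W_\gamma$ into the $|\det(L)|^{-1}$ appearing in $E_{i,j}$.

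Next I would assemble an orthonormal basis of $L^2(G_2)$. By Theorem \ref{V_E_ij} each $V_{E_{i,j}}$ is an isometry of $L^2\big(K,L^2(\R^*)\big)$ onto the closed subspace $\cM_{i,j}$, the $\cM_{i,j}$ are mutually orthogonal, and $L^2(G_2)=\sum^{\oplus}_{(i,j)\in I\times J}\cM_{i,j}$. Hence the images $\{V_{E_{i,j}}F_{i',j'}:(i,j),(i',j')\in I\times J\}$ of the basis $\{F_{i',j'}\}$ under these isometries form an orthonormal basis of $L^2(G_2)$. Applying Parseval's identity to an arbitrary $f\in L^2(G_2)$ then expresses $\|f\|_{L^2(G_2)}^{\,2}$ as the double sum over $(i,j)$ and $(i',j')$ of $\bigl|\langle f,V_{E_{i,j}}F_{i',j'}\rangle\bigr|^2$.

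The remaining work, valid once $f\in L^1(G_2)\cap L^2(G_2)$, is to identify each matrix coefficient. Unwinding the definition $V_EF[\ux,A]=\langle F,\sigma[\ux,A]E\rangle$ together with the weak definition $\sigma(f)=\int_{G_2}f[\ux,A]\,\sigma[\ux,A]\,d\mu_{G_2}$, a Fubini interchange of the $G_2$-integration with the inner product yields $\langle f,V_{E_{i,j}}F_{i',j'}\rangle=\langle\sigma(f)E_{i,j},F_{i',j'}\rangle=\langle\sigma(f)TF_{i,j},F_{i',j'}\rangle$, the last step using $E_{i,j}=TF_{i,j}$. Summing first over $(i',j')$ and invoking Parseval in $L^2\big(K,L^2(\R^*)\big)$ collapses the inner sum to $\|\sigma(f)TF_{i,j}\|^2$; summing over $(i,j)$ then recognizes the total as $\sum_{(i,j)}\|\sigma(f)TF_{i,j}\|^2=\|\sigma(f)T\|_{\rm HS}^{\,2}$, the Hilbert--Schmidt norm computed in the orthonormal basis $\{F_{i,j}\}$.

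The point demanding the most care is precisely the final identification, since $T$ is unbounded and so $\sigma(f)T$ is a priori only densely defined; one must argue it \emph{extends} to a genuine Hilbert--Schmidt operator, as the statement asserts. The resolution is matricial: the computation above shows that the matrix $\big[\langle\sigma(f)TF_{i,j},F_{i',j'}\rangle\big]$ indexed by $(I\times J)\times(I\times J)$ has finite Frobenius norm equal to $\|f\|_{L^2(G_2)}$. Such a matrix represents a Hilbert--Schmidt operator on $L^2\big(K,L^2(\R^*)\big)$ that agrees with $\sigma(f)T$ on the dense span of $\{F_{i,j}\}$, hence is the desired extension, and its Hilbert--Schmidt norm is $\|f\|_{L^2(G_2)}$. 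I would be careful to note that boundedness of $\sigma(f)$, furnished by the hypothesis $f\in L^1(G_2)$, is exactly what makes the interchange of the $G_2$-integral with the inner product legitimate in the first place, and that the orthonormality of the full family $\{V_{E_{i,j}}F_{i',j'}\}$ across distinct $(i,j)$ rests on the mutual orthogonality of the subspaces $\cM_{i,j}$ rather than on any single isometry.
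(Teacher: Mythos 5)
Your proposal is correct and follows essentially the same route as the paper: build the orthonormal basis $\{F_{i,j}\}$ of $L^2\big(K,L^2(\R^*)\big)$ from $W_\gamma\eta_i$ and $w_2g_j$, observe $E_{i,j}=TF_{i,j}$, use Theorem \ref{V_E_ij} to get the orthonormal basis $\{V_{E_{i,j}}F_{i',j'}\}$ of $L^2(G_2)$, and convert Parseval's identity into $\|\sigma(f)T\|_{\rm HS}^{\,2}$ via the identification $\langle f,V_{E_{i,j}}F_{i',j'}\rangle=\langle\sigma(f)TF_{i,j},F_{i',j'}\rangle$. Your closing remark on why the densely defined $\sigma(f)T$ genuinely extends to a Hilbert--Schmidt operator is a welcome point of care that the paper leaves implicit.
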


\section{Concluding Remarks}
The bases $\{\eta_i:i\in I\}$ of $L^2(\cO)$ and
$\{g_j:j\in J\}$ of $L^2(\cO_1)$ can be selected
from functions in $C_c(\cO)$ and $C_c(\cO_1)$,
respectively, and as smooth as one may need. The
subspaces $\{\cM_{i,j}:(i,j)\in I\times J\}$ are
then constructed from these bases. The decomposition in
Theorem \ref{V_E_ij} should be useful for analysis
on $G_2$ in a manner similar to how the Peter--Weyl
Theorem plays a role in analysis on a compact group.

The representation $\sigma$ can be presented in 
several equivalent forms using natural isomorphisms
of $L^2\big(K,L^2(\R^*)\big)$ with related function 
spaces. For example, $L^2\big(K,L^2(\R^*)\big)$ is
identified with $L^2\big(K_0,L^2(\R^*)\big)$ with a
simple change of notation. Also,
$L^2\big(K,L^2(\R^*)\big)$ can
be viewed as $L^2(K_0\times\R^*)$ by mapping
$F\to \tilde{F}$, where $\tilde{F}(L,\nu)=
\big(F[\u0,L]\big)(\nu)$, for a.e. $(L,\nu)\in K_0\times
\R^*$. The expressions 
for $\sigma$ can be easily modified.
A more substantial change of Hilbert space is
carried out in \cite{MT}, a companion paper to this one.
There, we define a unitary map $U$ of 
$L^2\big(K,L^2(\R^*)\big)$ onto 
$L^2\big(\widehat{\R^2}\times\widehat{\R}\big)$
and let $\sigma_2[\ux,A]=U\sigma[\ux,A]U^{-1}$, for
all $[\ux,A]\in G_2$. We apply the conclusions of
the Duflo--Moore Theorem to $\sigma_2$ and obtain
an analog of the continuous wavelet transform for
functions of three variables, with one of the variables
treated in a manner distinct from the other two.

\end{document}